\newtheorem{theorem}{Theorem}[section]
\newtheorem{corollary}[theorem]{Corollary}
\newtheorem{lemma}[theorem]{Lemma}
\newtheorem{proposition}[theorem]{Proposition}
\newtheorem{remark}{Remark}
\newtheorem{definition}[theorem]{Definition}
\newcommand{\divx}{\mathop{\mathrm{div}}}
\newcommand{\esssup}{\mathop{\mathrm{ess~sup}}}
\numberwithin{equation}{section}
\def\Yint#1{\mathchoice
    {\YYint\displaystyle\textstyle{#1}}%
    {\YYint\textstyle\scriptstyle{#1}}%
    {\YYint\scriptstyle\scriptscriptstyle{#1}}%
    {\YYint\scriptscriptstyle\scriptscriptstyle{#1}}%
      \!\iint}
\def\YYint#1#2#3{{\setbox0=\hbox{$#1{#2#3}{\iint}$}
    \vcenter{\hbox{$#2#3$}}\kern-.51\wd0}}
\def\longdash{{-}\mkern-3.5mu{-}} 
\def\tiltlongdash{\rotatebox[origin=c]{15}{$\longdash$}}
\def\fiint{\Yint\tiltlongdash}
\title{Gradient continuity for the parabolic $(1,\,p)$-Laplace equation under the subcritical case}
\author{Shuntaro Tsubouchi \footnote{Graduate School of Mathematical Sciences, The University of Tokyo, Japan. \textit{Email}: \texttt{tsubos@g.ecc.u-tokyo.ac.jp}}}
\date{}
\begin{document}
\maketitle

\abstract{This paper is concerned with the gradient continuity for the parabolic $(1,\,p)$-Laplace equation. In the supercritical case $\frac{2n}{n+2}<p<\infty$, where $n\ge 2$ denotes the space dimension, this gradient regularity result has been proved recently by the author. In this paper, we would like to prove that the same regularity holds even for the subcritical case $1<p\le \frac{2n}{n+2}$ with $n\ge 3$, on the condition that a weak solution admits the $L^{s}$-integrability with $s>n(2-p)/p$. The gradient continuity is proved, similarly to the supercritical case, once the local gradient bounds of solutions are verified. Hence, this paper mainly aims to show the local boundedness of a solution and its gradient by Moser's iteration. The proof is completed by considering a parabolic approximate problem, verifying a comparison principle, and showing a priori gradient estimates of a bounded weak solution to the relaxed equation.}
\bigbreak

\textbf{Mathematics Subject Classification (2020)} 35K92, 35B65, 35A35
\bigbreak

\textbf{Keywords} Gradient continuity, Local gradient bounds, Moser's iteration, Comparison principle

\section{Introduction}\label{Sect:Introduction}
This paper is concerned with the parabolic regularity for a weak solution to the $(1,\,p)$-Laplace equation
\begin{equation}\label{Eq (Section 1): (1,p)-Laplace}
\partial_{t}u-\Delta_{1}u-\Delta_{p}u=0\quad \textrm{in}\quad \Omega_{T}\coloneqq \Omega\times (0,\,T),
\end{equation}
where $\Omega\subset {\mathbb R}^{n}$ is a bounded Lipschitz domain, and $T\in(0,\,\infty)$ is a fixed constant.
For an unknown function $u=u(x_{1},\,\dots\,,\,x_{n},\,t)$, the time derivative and the spatial gradient of $u$ are respectively denoted by $\partial_{t}u$ and $\nabla u=(\partial_{x_{j}}u)_{j=1,\,\dots\,,\,n}$.
The divergence operators $\Delta_{1}$ and $\Delta_{p}$ are the one-Laplacian and the $p$-Laplacian, defined as 
\[\Delta_{s}u\coloneqq \divx\left(\lvert\nabla u\rvert^{s-2}\nabla u \right)\quad \textrm{for}\quad s\in\lbrack 1,\,\infty).\]
In this paper, the space dimension $n$ and the exponent $p$ are assumed to be
\begin{equation}\label{Eq (Section 1): Subcritical Range}
n\ge 3\quad \textrm{and}\quad 1<p\le \frac{2n}{n+2}.
\end{equation}
We aim to prove that $\nabla u$ is continuous in $\Omega_{T}$, provided the unknown function $u\colon \Omega_{T}\to{\mathbb R}$ satisfies
\begin{equation}\label{Eq (Section 1): Higher Integrability}
u\in L_{\mathrm{loc}}^{s}(\Omega_{T})\quad \textrm{with}\quad s>s_{\mathrm{c}}\coloneqq\frac{n(2-p)}{p}.
\end{equation}
The higher integrability assumption (\ref{Eq (Section 1): Higher Integrability}) is optimal, since otherwise any improved regularity is in general not expected for the parabolic $p$-Laplace equation for $p\in(1,\, \frac{2n}{n+2}\rbrack$.

In \cite{T-parabolic}, the author has recently shown the same regularity result for
\begin{equation}\label{Eq (Section 1): Parabolic (1,p)-Poisson}
  \partial_{t}u-\Delta_{1}u-\Delta_{p}u=f\quad \text{in}\quad \Omega_{T},
\end{equation}
where $n$ and $p$ satisfy the supercritical case
\begin{equation}\label{Eq (Section 1): Supercritical Range}
  n\ge 2\quad \text{and}\quad \frac{2n}{n+2}<p<\infty,
\end{equation}
and the external force term $f\in L^{r}(\Omega_{T})$ is given with the exponent $r$ suitably large.
Compared to \cite{T-parabolic}, this paper, which deals with the subcritical case (\ref{Eq (Section 1): Subcritical Range}), requires $f\equiv 0$ for a technical issue (see Section \ref{Subsect: Lit}).
\subsection{Truncation approach}\label{Subsect: St}
In Section \ref{Subsect: St}, we mention the basic strategy for showing $\nabla u\in C^{0}(\Omega_{T};\,{\mathbb R}^{n})$ in \cite{T-parabolic}.
More detailed explanations are given in Section \ref{Subsect: A priori Hoelder}.

The main difficulty arises from the fact that the uniform parabolicity of $-\Delta_{1}-\Delta_{p}$ breaks as a gradient vanishes.
To explain this, we formally differentiate (\ref{Eq (Section 1): (1,p)-Laplace}) by the space variable $x_{j}$.
The resulting equation is 
\begin{equation}\label{Eq (Section 1): Differentiated Eq}
\partial_{t}\partial_{x_{j}}u-\mathop{\mathrm{div}}(\nabla^{2} E(\nabla u)\nabla\partial_{x_{j}}u)=0,
\end{equation}
where $E(z)=\lvert z\rvert+\lvert z\rvert^{p}/p\,(z\in{\mathbb R}^{n})$ is the energy density.
The coefficient matrix $\nabla^{2}E(\nabla u)$ loses its uniform ellipticity on the facet $\{\nabla u=0\}$, in the sense that the ratio
\begin{equation}\label{Eq (Section 1): PR}
  \frac{\text{(the largest eigenvalue of $\nabla^{2} E(\nabla u)$)}}{\text{(the smallest eigenvalue of $\nabla^{2} E(\nabla u)$)}}=C_{p}\left(1+\lvert \nabla u\rvert^{1-p}\right)
\end{equation}
blows up as $\nabla u\to 0$.
In this sense, (\ref{Eq (Section 1): (1,p)-Laplace}) is not everywhere uniformly parabolic, which makes it difficult to deduce quantitative continuity estimates for $\nabla u$, especially on the facet.
However, the ratio above will be bounded, if a gradient does not vanish.
Hence, we introduce a truncated spatial gradient
\[{\mathcal G}_{\delta}(\nabla u)\coloneqq (\lvert\nabla u\rvert-\delta)_{+}\frac{\nabla u}{\lvert \nabla u\rvert},\]
where $\delta\in(0,\,1)$ denotes the truncation parameter, and $a_{+}\coloneqq \max\{\,a,\,0\,\}\equiv a\wedge 0$ for $a\in{\mathbb R}$.
The main purpose in \cite{T-parabolic} is to prove the local H\"{o}lder continuity of ${\mathcal G}_{\delta}(\nabla u)$, whose continuity estimate may depend on $\delta$.
This is naturally expected since the support of ${\mathcal G}_{\delta}(\nabla u)$ coincides with a non-degenerate regime $\{\lvert\nabla u\rvert\ge \delta>0\}$, where we can regard (\ref{Eq (Section 1): (1,p)-Laplace}) uniformly parabolic, depending on each $\delta$.
Although our continuity estimate of ${\mathcal G}_{\delta}(\nabla u)$ will depend on $\delta$, it is possible to conclude the continuity of ${\mathcal G}_{0}(\nabla u)=\nabla u$, since the mapping ${\mathcal G}_{\delta}$ uniformly approximates the identity.
In this qualitative way, we complete the proof of the gradient continuity.
This truncation approach can be found in the recent study of elliptic regularity for the second-order $(1,\,p)$-Laplace problem (\cite{T-scalar, T-system}) and a second-order degenerate problem (\cite{BDGPdN, CF MR3133426}; see also \cite{SV MR2728558} for a weaker result).

To achieve our goal rigorously, we have to appeal to approximate (\ref{Eq (Section 1): (1,p)-Laplace}).
Here we should note that (\ref{Eq (Section 1): (1,p)-Laplace}) is not uniformly parabolic, especially on the facet.
This prevents us from applying a standard difference quotient method, and hence it seems difficult to treat (\ref{Eq (Section 1): Differentiated Eq}) in $L^{2}(0,\,T;\,W^{-1,\,2}(\Omega))$.
For this reason, we have to consider a parabolic approximate equation that is uniformly parabolic, depending on the approximation parameter $\varepsilon\in(0,\,1)$.
In this paper, we relax the energy density $E(z)=\lvert z\rvert+\lvert z\rvert^{p}/p$ by convoluting with the Friedrichs mollifier $\rho_{\varepsilon}$ (see \cite{MR2916967} as a related item).
Therefore, we consider an approximate equation of the form
\[\partial_{t}u_{\varepsilon}-\divx(\nabla E^{\varepsilon}  (\nabla u_{\varepsilon}))=0\quad \text{with}\quad E^{\varepsilon}  \coloneqq \rho_{\varepsilon}\ast E\in C^{\infty}({\mathbb R}^{n}).\]
The proof is completed by showing the $L^{p}$-strong convergence of a gradient, and the local H\"{o}lder continuity of
\[{\mathcal G}_{2\delta,\,\varepsilon}(\nabla u_{\varepsilon})\coloneqq \left(\sqrt{\varepsilon^{2}+\lvert\nabla u_{\varepsilon}\rvert^{2}}-2\delta\right)_{+}\frac{\nabla u_{\varepsilon}}{\lvert\nabla u_{\varepsilon}\rvert},\]
whose continuity estimate may depend on $\delta\in(0,\,1)$ but is independent of $\varepsilon\in(0,\,\delta/8)$.

The detailed computations of the H\"{o}lder gradient estimates of ${\mathcal G}_{2\delta,\,\varepsilon}(\nabla u_{\varepsilon})$ are already given in \cite[Theorem 2.8]{T-parabolic} for $p\in(1,\,\infty)$, provided that $\nabla u_{\varepsilon}$ is uniformly bounded with respect to $\varepsilon\in(0,\,\delta/8)$ (see Section \ref{Sect:MainTheorem}).
Therefore, it suffices to prove local gradient bounds of $u_{\varepsilon}$, which is the main purpose of this paper and is proved by following the three steps.
Firstly, we show local $L^{\infty}$-bounds of $u$ by Moser's iteration, where (\ref{Eq (Section 1): Higher Integrability}) is used (see also \cite[Theorem 2]{Choe MR1135917} and \cite[Chapter 8, A.2]{MR2865434}).
Secondly, we verify a comparison principle and a weak maximum principle for $u_{\varepsilon}$ under some Dirichlet boundary conditions.
For this topic, we refer the reader to \cite[Chapter 4]{BDGLS} and \cite[Chapter 3]{MR2356201}, which materials provide comparison principles for weak solutions.
Finally, we prove $\nabla u_{\varepsilon}\in L_{\mathrm{loc}}^{q}$ for any $q\in(p,\,\infty\rbrack$ by $u_{\varepsilon}\in L_{\mathrm{loc}}^{\infty}$ and Moser's iteration.
The recent item \cite[Chapter 9]{BDGLS} gives a similar result of gradient bounds for parabolic $p$-Laplace equations with $p\in(1,\,2)$.
The main difference between this paper and \cite[Chapter 9]{BDGLS} is that we carefully choose test functions that are always supported in a certain non-degenerate region of $u_{\varepsilon}$.

\subsection{Literature overview}\label{Subsect: Lit}
We briefly mention some literature on the parabolic $p$-Laplace equation, and the source of (\ref{Eq (Section 1): (1,p)-Laplace}).
Also, we would like to compare this paper with the author's recent paper \cite{T-parabolic}.

For the parabolic $p$-Laplace equation
\begin{equation}\label{Eq (Section 1): parabolic p}
  \partial_{t}u-\Delta_{p}u=0,
\end{equation} 
the existence and the regularity of a weak solution $u$ are well-established.
The existence theory is found in the monographs \cite{Lions MR0259693, Showalter MR1422252}, based on the Faedo--Galerkin method and the monotonicity of $\Delta_{p}$.
There, (\ref{Eq (Section 1): parabolic p}) is treated in $L^{p^{\prime}}(0,\,T;\,W^{-1,\,p^{\prime}}(\Omega))$ when $\frac{2n}{n+2}<p<\infty$, and in $L^{p^{\prime}}(0,\,T;\,W^{-1,\,p^{\prime}}(\Omega)+L^{2}(\Omega))$ when $1<p\le \frac{2n}{n+2}$, where $p^{\prime}\coloneqq p/(p-1)$ denotes the H\"{o}lder conjugate exponent of $p$.
The H\"{o}lder gradient continuity of $u$ was proved by DiBenedetto--Friedman \cite{MR783531, MR814022} in 1985 for the supercritical range $p\in(\frac{2n}{n+2},\,\infty)$ (see also \cite{MR0718944, MR743967, MR0886719} for weaker results).
Later in 1991, Choe \cite{Choe MR1135917} proved the same regularity result for $p\in(1,\,\infty)$, under the assumption that $u$ is in $L_{\mathrm{loc}}^{s}(\Omega_{T})$ with the exponent $s\in(1,\,\infty)$ satisfying $n(p-2)+sp>0$.
In particular, \cite{Choe MR1135917} covers the subcritical case (\ref{Eq (Section 1): Subcritical Range}) with the higher integrability assumption (\ref{Eq (Section 1): Higher Integrability}).
It is worth mentioning that without (\ref{Eq (Section 1): Higher Integrability}), no improved regularity result is expected even for the $p$-Laplace problem (see \cite{Dibenedetto Herrero MR1066761}).
In these fundamental works, careful scaling arguments in space and time are used, so that the H\"{o}lder gradient continuity estimates are quantitatively deduced.
This is often called the intrinsic scaling method, which plays an important role when showing various regularity properties for (\ref{Eq (Section 1): parabolic p}).
As related materials, see the monographs \cite{MR1230384, MR2865434}, and the recent paper \cite{BDLS}.

The sources of $(1,\,p)$-Laplace problem can be found in the fields of fluid mechanics for $p=2$ \cite[Chapter VI]{MR0521262}, and materials sciences for $p=3$ \cite{spohn1993surface}.
Among them, the second-order parabolic equation (\ref{Eq (Section 1): Parabolic (1,p)-Poisson}) can be found when modeling the motion of the Bingham fluid, the non-Newtonian fluid that has both plastic and viscosity properties.
In this model, the one-Laplacian $\Delta_{1}$ reflects the plasticity of a fluid, while the Laplacian $\Delta=\Delta_{2}$ does the viscosity.
As explained in \cite[\S 1.3]{T-parabolic} (see also \cite[Chapter VI]{MR0521262}), (\ref{Eq (Section 1): Parabolic (1,p)-Poisson}) arises when one considers the unknown three-dimensional vector field $U=(0,\,0,\,u(t,\,x_{1},\,x_{2}))$ denoting the velocity of a Bingham fluid in a pipe cylinder $\Omega\times {\mathbb R}\subset {\mathbb R}^{2}\times {\mathbb R}$.
There, the external force term $f=-\partial_{x_{3}}\pi$, where $\pi$ denotes the pressure function, depends at most on $t$.
Mathematical analysis for the $(1,\,p)$-Laplace equation (\ref{Eq (Section 1): (1,p)-Laplace}) at least goes back to \cite{MR0521262}, where the methods based on variational inequalities are used.
However, the continuity of a spatial gradient for (\ref{Eq (Section 1): (1,p)-Laplace}) has not been well-established, even for $p=2$.

Motivated by the Bingham fluid model, in \cite{T-parabolic}, the author has recently shown the gradient continuity for (\ref{Eq (Section 1): Parabolic (1,p)-Poisson}).
More precisely, \cite{T-parabolic} treats the case where the conditions (\ref{Eq (Section 1): Supercritical Range}) and $f\in L^{r}(\Omega_{T})$ are satisfied with 
\[\frac{1}{p}+\frac{1}{r}\le 1\quad \text{and}\quad n+2<r\le\infty.\]
By the former assumption, the continuous inclusion $L^{q}(\Omega_{T})\hookrightarrow L^{p^{\prime}}(0,\,T;\,W^{-1,\,p^{\prime}}(\Omega))$ holds. 
This inclusion plays an important role in constructing the solution $u$, and it appears that the former condition cannot be removed when showing convergence for (\ref{Eq (Section 1): Approximation general}).
It is worth noting that the latter assumption is optimal when one considers the gradient continuity for parabolic $p$-Laplace equations with external force terms \cite[Chapters VIII--IX]{MR1230384}.
For the approximation of (\ref{Eq (Section 1): Parabolic (1,p)-Poisson}), the following equation is considered in \cite[\S 2]{T-parabolic};
\begin{equation}\label{Eq (Section 1): Approximation general}
  \partial_{t}u_{\varepsilon}-\divx(\nabla E^{\varepsilon}  (\nabla u_{\varepsilon}))=f_{\varepsilon}\quad \text{in}\quad \Omega_{T},
\end{equation}
where $f_{\varepsilon}\in C^{\infty}(\Omega_{T})$ weakly converges to $f$ in $L^{r}(\Omega_{T})$.
In the supercritical case (\ref{Eq (Section 1): Supercritical Range}), the compact embedding $V_{0}\coloneqq W_{0}^{1,\,p}(\Omega)\hookrightarrow\hookrightarrow L^{2}(\Omega)$ and the continuous inclusion $L^{2}(\Omega)\hookrightarrow W^{-1,\,p^{\prime}}(\Omega)\eqqcolon V_{0}^{\prime}$ hold.
In particular, we are allowed to use the parabolic compact embedding
\begin{equation}\label{Eq (Section 1): CPT}
    \left\{ u\in L^{p}(0,\,T;\,V_{0})\mathrel{}\middle|\mathrel{} \partial_{t} u\in L^{p^{\prime}}(0,\,T;\,V_{0}^{\prime}) \right\}\hookrightarrow\hookrightarrow L^{p}(0,\,T;\,L^{2}(\Omega))
\end{equation}
by the Aubin--Lions lemma \cite[Chapter III, Proposition 1.3]{Showalter MR1422252}.
The strong convergence result for weak solutions to (\ref{Eq (Section 1): Approximation general}) is shown in \cite[Proposition 2.4]{T-parabolic}, where the compact embedding (\ref{Eq (Section 1): CPT}) is carefully used in controlling a non-zero external force term $f_{\varepsilon}$.
In the subcritical case (\ref{Eq (Section 1): Subcritical Range}), however, we have to choose $V_{0}\coloneqq W_{0}^{1,\,p}(\Omega)\cap L^{2}(\Omega)$ in constructing a weak solution (see \cite{Lions MR0259693}).
Since the continuous inclusion $V_{0}\hookrightarrow L^{2}(\Omega)$ is no longer compact, we cannot use (\ref{Eq (Section 1): CPT}).
For this technical reason, we have to let $f_{\varepsilon}\equiv 0$ when (\ref{Eq (Section 1): Subcritical Range}) is assumed.

The basic strategy of this paper is almost the same as that of \cite{T-parabolic}.
However, there are mainly two differences between this paper and \cite{T-parabolic}, which respectively deal with the cases (\ref{Eq (Section 1): Subcritical Range}) and (\ref{Eq (Section 1): Supercritical Range}).
The first is that, as explained in the last paragraph, it appears that our approximation argument might not work in treating any non-trivial external force term in the case (\ref{Eq (Section 1): Subcritical Range}).
The second is that the local $L^{\infty}$-bound of $\nabla u_{\varepsilon}$ basically depends on that of $u_{\varepsilon}$ when $p\in(1,\,\frac{2n}{n+2}\rbrack$, while for $p\in (\frac{2n}{n+2},\,\infty)$, local $L^{p}$--$L^{\infty}$ estimates of $\nabla u_{\varepsilon}$ are straightforwardly shown in \cite[Theorem 2.7]{T-parabolic}.
It should be emphasized that there is no difference between (\ref{Eq (Section 1): Subcritical Range}) and (\ref{Eq (Section 1): Supercritical Range}), when it comes to showing the a priori continuity estimates that depend on the truncation parameter $\delta$.
Indeed, for (\ref{Eq (Section 1): Approximation general}) with $p\in(1,\,\infty)$ and $r\in(n+2,\,\infty\rbrack$, the uniform local H\"{o}lder estimates of ${\mathcal G}_{2\delta,\,\varepsilon}(\nabla u_{\varepsilon})$ are already given in \cite[Theorem 2.8]{T-parabolic}, provided that $\nabla u_{\varepsilon}$ admits uniform $L^{\infty}$-bounds.
Therefore, in this paper, we mainly focus on the strong convergence and the local bound of $\nabla u_{\varepsilon}$.

\subsection{Notations}
Before stating our main result, we would like to fix some notations.

The set of all natural numbers is denoted by ${\mathbb N}\coloneqq \{\,1,\,2,\,\dots\,\}$, and we write ${\mathbb Z}_{\ge 0}\coloneqq \{0\}\cup {\mathbb Z}$, the set of all non-negative integers.
The symbol ${\mathbb R}$ denotes the set of all real numbers, and the $n$-dimensional Euclidean space is denoted by ${\mathbb R}^{n}$.
We often use the symbols ${\mathbb R}_{>0}\coloneqq (0,\,\infty)\subset{\mathbb R}$, and ${\mathbb R}_{\ge 0}\coloneqq \lbrack 0,\,\infty)\subset{\mathbb R}$.
For a fixed exponent $s\in(1,\,\infty)$, the symbol $s^{\prime}\coloneqq s/(s-1)\in(1,\,\infty)$ stands for its H\"{o}lder conjugate exponent.

For the vectors $x=(x_{1},\,\dots\,,\,x_{n}),\,y=(y_{1},\,\dots\,,\,y_{n})\in{\mathbb R}^{n}$, the standard inner product and the Euclidean norm are defined as
\[\langle x\mid y\rangle\coloneqq \sum_{j=1}^{n}x_{j}y_{j}\in{\mathbb R}\quad \text{and}\quad \lvert x\rvert\coloneqq \sqrt{\langle x\mid x\rangle}\in{\mathbb R}_{\ge 0}.\]

For $R\in (0,\,\infty)$, $x_{0}\in{\mathbb R}^{n}$, $t_{0}\in{\mathbb R}$, we write the open ball $B_{R}(x_{0})\coloneqq \{x\in{\mathbb R}^{n}\mid \lvert x-x_{0}\rvert<R\}$, and the half interval $I_{R}(t_{0})\coloneqq (t_{0}-R^{2},\,t_{0}\rbrack$.
The symbol $Q_{R}(x_{0},\,t_{0})\coloneqq B_{R}(x_{0})\times I_{R}(t_{0}) \subset{\mathbb R}^{n+1}$ stands for the parabolic cylinder centered at $(x_{0},\,t_{0})$.
When the points $x_{0}\in{\mathbb R}^{n}$ and $t_{0}\in{\mathbb R}$ are clear, the symbols $B_{R}$, $I_{R}$, and $Q_{R}$ are used for notational simplicity. 
For $U\subset{\mathbb R}^{k}$ with $k\in{\mathbb N}$, the boundary of $U$ and the closure of $U$, with respect to the Euclidean metric, are denoted by $\partial U$ and $\overline{U}$.
For $\Omega_{T}=\Omega\times (0,\,T)$ and $Q_{R}(x_{0},\,t_{0})=B_{R}(x_{0})\times I_{R}(t_{0})$, the parabolic boundaries are straightforwardly defined as $\partial_{\mathrm p}\Omega_{T}\coloneqq (\Omega\times \{0\})\cup (\partial\Omega\times\lbrack 0,\,T))\subset{\mathbb R}^{n+1}$ and $\partial_{\mathrm p}Q_{R}\coloneqq (B_{R}(x_{0})\times \{t_{0}-R^{2}\})\cup ( \partial B_{R}(x_{0})\times \lbrack t_{0}-R^{2},\,t_{0}))\subset{\mathbb R}^{n+1}$.
We introduce the standard parabolic metric $d_{\mathrm p}$ as 
\[d_{\mathrm p}((x,\,t),\,(y,\,s))\coloneqq \max\left\{\lvert x-y\rvert,\,\lvert t-s\rvert^{1/2}\right\}\]
for $(x,\,t),\,(y,\,s)\in \Omega_{T}$. With respect to this metric, we can define the parabolic distance $\mathop{\mathrm{dist}}_{\mathrm p}(A,\,B)$ for $A,\,B\subset \Omega_{T}$.
For $A,\,B\subset \Omega_{T}$, we say $A\Subset B$ when $\overline{A}\subset B$ holds.

For an $n\times n$ real matrix $A=(A_{j,\,k})_{j,\,k}$, we use the standard norms
\[\lVert A\rVert\coloneqq \sup\left\{ \lvert Ax\rvert\mathrel{}\middle|\mathrel{}x\in{\mathbb R}^{n},\,\lvert x\rvert\le 1 \right\},\quad \text{and} \quad \lvert A\rvert\coloneqq \sqrt{\sum_{j,\,k=1}^{n}A_{j,\,k}^{2}},\]
often called the operator norm and the Hilbert--Schmidt norm respectively.
For $n\times n$ real symmetric matrices $A,\,B$, we say $A\leqslant B$ when $B-A$ is positive semi-definite.
The identity matrix, and the zero matrix are respectively denoted by $\mathrm{id}_{n}$, and $O_{n}$.

For a real Banach space $E$ and its continuous dual space $E^{\prime}$, the duality pairng is denoted by $\langle f,\,v\rangle_{E^{\prime},\,E}\coloneqq f(v)\in{\mathbb R}$ for $f\in E^{\prime}$ and $v\in E$.
When the spaces $E$ and $E^{\prime}$ are clear, we write $\langle f,\,v\rangle$ for simplicity of notation.

Fix a Lebesugue measurable set $U\subset{\mathbb R}^{l}$ with $l\in{\mathbb N}$.
The symbol $L^{s}(U;\,{\mathbb R}^{k})$ denotes the Lebesgue space for $k\in{\mathbb N}$ and $s\in\lbrack 1,\,\infty\rbrack$.
In other words, an ${\mathbb R}^{k}$-valued function $f=f(x)$ is in $L^{s}(U;\,{\mathbb R}^{k})$ if and only if it is Lebesgue measurable in $U$, and the norm 
\[\lVert f\rVert_{L^{s}(U)}\coloneqq \left\{\begin{array}{rl}\left( \displaystyle\int_{U}\lvert f\rvert^{s}\,{\mathrm d}x\right)^{1/s}& (1\le s<\infty),\\ \esssup\limits_{x\in U}\lvert f(x)\rvert & (s=\infty), \end{array} \right.\] 
is finite.
For a real-valued function $f=f(x,\,t)$ that is integrable in a parabolic cylinder $Q_{r}=Q_{r}(x_{0},\,t_{0})\subset{\mathbb R}^{n+1}$, we set an average integral
\[\fiint_{Q_{r}}f(x,\,t)\,{\mathrm d}x{\mathrm d}t\coloneqq \frac{1}{\lvert Q_{r}\rvert}\iint_{Q_{r}}f(x,\,t){\mathrm d}x{\mathrm d}t\in{\mathbb R}.\]
For $s\in(1,\,\infty)$, $I=(a,\,b)\subset{\mathbb R}$, and a real Banach space $E$, the symbol $L^{s}(I;\,E)=L^{s}(a,\,b;\,E)$ denotes the standard Bochnar space, equipped with the norm
\[\lVert u\rVert_{L^{s}(I;\,E)}\coloneqq \left(\int_{a}^{b}\lVert u(t)\rVert_{E}^{s}\,{\mathrm d}t\right)^{1/s}.\]

For given $s\in \lbrack 1,\,\infty\rbrack$, $k\in{\mathbb Z}_{\ge 0}$, and a bounded domain $U\Subset {\mathbb R}^{n}$, let $W^{k,\,s}(U)$ denote the Sobolev space of the $k$-th order.
In other words, a real-valued function $f$ is in $W^{k,\,s}(U)$ if and only if any $l$-th order partial derivative of $f$ with $l\in\{\,0,\,\dots\,k\,\}$, in the sense of distribution, is in $L^{s}(U)$. We write $W^{k,\,s}(U;\,{\mathbb R}^{l})\coloneqq \left(W^{k,\,s}(U)\right)^{l}$ for $l\in{\mathbb N}$.
The norm of function space $W^{1,\,s}(U)$ is equipped with the norm $\lVert f\rVert_{W^{1,\,s}(U)}\coloneqq\lVert f\rVert_{L^{s}(U)}+\lVert \nabla f\rVert_{L^{s}(U)}$ for each $f\in W^{1,\,s}(U)$.
The function space $W_{0}^{1,\,s}(U)$ is defined as the closure of $C_{\mathrm c}^{1}(U)\subset W^{1,\,s}(U)$, the set of all real-valued functions that are both compactly supported in $U$, and continuously differentiable in $U$.
This function space is equipped with another norm $\lVert \nabla f\rVert_{L^{s}(U)}$ for $f\in W_{0}^{1,\,s}(U)$, which is equivalent to $\lVert\,\cdot\, \rVert_{W^{1,\,s}(U)}$.
The continuous dual space of $(W_{0}^{1,\,s}(U),\,\lVert \nabla \,\cdot\,\rVert_{L^{s}(U)})$ is denoted by $W^{-1,\,s^{\prime}}(U)$.

For $l,\,m\in {\mathbb N}$, and an open set $U\subset{\mathbb R}^{k}$ with $k\in{\mathbb N}$, let $C^{m}(U;\,{\mathbb R}^{l})$ denote the set of all ${\mathbb R}^{l}$-valued function of the $C^{m}$-class.
Also, for any set $U\subset{\mathbb R}^{k}$, we write $C^{0}(U;\,{\mathbb R}^{l})$ for the set of all ${\mathbb R}^{l}$-valued continuous mapping in $U$.
For $l=1$, these function spaces are denoted by $C^{m}(U)$ and $C^{0}(U)$ for short.
For a closed interval $I\subset{\mathbb R}$, the symbol $C^{0}(I;\,L^{2}(\Omega))$ stands for the set of all $L^{2}(\Omega)$-valued functions in $I$ that are strongly continuous.

\subsection{Main result and outline of the paper}
In this paper, we consider a generalized equation of the form
\begin{equation}\label{Eq (Section 1): main eq}
\partial_{t}u-\divx\left(\nabla E(\nabla u)\right)=0
\end{equation}
in $\Omega_{T}$ with $E=E_{1}+E_{p}$, where $E_{1}$ and $E_{p}$ are convex mappings from ${\mathbb R}^{n}$ to ${\mathbb R}_{\ge 0}$.
For the smoothness of these densities, we require $E_{1}\in C^{0}({\mathbb R}^{n})\cap C^{2}({\mathbb R}^{n}\setminus \{0\})$ and $E_{p}\in C^{1}({\mathbb R}^{n})\cap C^{2}({\mathbb R}^{n}\setminus \{ 0\})$.
The density $E_{p}$ admits the constants $0<\lambda_{0}\le\Lambda_{0}<\infty$ satisfying 
\begin{equation}\label{Eq (Section 1): C-1-bound of Ep}
\lvert \nabla E_{p}(z)\rvert\le \Lambda_{0}\lvert z\rvert^{p-1}
\end{equation}
for all $z\in{\mathbb R}^{n}$, and
\begin{equation}\label{Eq (Section 1): Ellipticity of Ep}
\lambda_{0}\lvert z\rvert^{p-2}\mathrm{id}_{n} \leqslant \nabla^{2} E_{p}(z)\leqslant \Lambda_{0}\lvert z\rvert^{p-2}\mathrm{id}_{n}
\end{equation}
for all $z\in{\mathbb R}^{n}\setminus \{ 0\}$.
We assume that $E_{1}$ is positively one-homogeneous.
More precisely, $E_{1}$ satisfies
\begin{equation}\label{Eq (Section 1): Positive 1-hom}
E_{1}(k z)=kE_{1}(z)
\end{equation}
for all $z\in{\mathbb R}^{n}$, $k\in{\mathbb R}_{>0}$.
For the continuity of the Hessian matrices of $E_{p}$, we assume that there exists a concave, non-decreasing function $\omega_{p}\colon {\mathbb R}_{\ge 0}\to {\mathbb R}_{\ge 0}$ with $\omega_{p}(0)=0$, such that 
 \begin{equation}\label{Eq (Section 1): modulus of continuity of Hess Ep}
  \left\lVert \nabla^{2}E_{p}(z_{1})-\nabla^{2}E_{p}(z_{2}) \right\rVert\le C_{\delta,\,M}\omega_{p}(\lvert z_{1}-z_{2}\rvert/\mu)
 \end{equation}
 holds for all $z_{1}$, $z_{2}\in{\mathbb R}^{n}$ with $\mu/32\le \lvert z_{j}\rvert\le 3\mu$ for $j\in\{\,1,\,2\,\}$, and $\mu\in(\delta,\,M-\delta)$.
 Here $\delta$ and $M$ are fixed constants such that $0<2\delta<M<\infty$, and the constant $C_{\delta,\,M}\in{\mathbb R}_{>0}$ depends on $\delta$ and $M$.
 For $E_{1}$, we require the existence of a concave, non-decreasing function $\omega_{1}\colon{\mathbb R}_{\ge 0}\to {\mathbb R}_{\ge 0}$ with $\omega_{1}(0)=0$, such that
 \begin{equation}\label{Eq (Section 1): (Assumption) modulus of continuity of Hess E1}
  \left\lVert \nabla^{2}E_{1}(z_{1})-\nabla^{2}E_{1}(z_{2}) \right\rVert\le \omega_{1}(\lvert z_{1}-z_{2}\rvert)
 \end{equation}
 holds for all $z_{1}$, $z_{2}\in{\mathbb R}^{n}$ with $1/32\le \lvert z_{j}\rvert\le 3$ for $j\in\{\,1,\,2\,\}$.
 Although the assumptions (\ref{Eq (Section 1): modulus of continuity of Hess Ep})--(\ref{Eq (Section 1): (Assumption) modulus of continuity of Hess E1}) are used not in showing local gradient bounds, they are needed in the proof of a priori H\"{o}lder estimates of truncated gradients.
 Since this paper mainly aims to show local gradient bounds, (\ref{Eq (Section 1): modulus of continuity of Hess Ep})--(\ref{Eq (Section 1): (Assumption) modulus of continuity of Hess E1}) are not explicitly used, except last Section \ref{Sect:MainTheorem}.

To define a weak solution to (\ref{Eq (Section 1): main eq}), we introduce standard function spaces.
For $p\in (1,\,\frac{2n}{n+2}\rbrack$, we set 
\[V_{0}\coloneqq W_{0}^{1,\,p}(\Omega)\cap L^{2}(\Omega)\]
equipped with the norm
\[\lVert v\rVert_{V_{0}}\coloneqq \lVert \nabla v \rVert_{L^{p}(\Omega)}+\lVert v\rVert_{L^{2}(\Omega)}\]
for $v\in V_{0}$.
Then, the continuous dual space of $V_{0}$ is \(V_{0}^{\prime}=W^{-1,\,p^{\prime}}(\Omega)+L^{2}(\Omega).\)

We set the parabolic function spaces
\[\begin{array}{rcl} X^{p}(0,\,T;\,\Omega)&\coloneqq & \left\{ u\in L^{p}(0,\,T;\,V)\mathrel{}\middle|\mathrel{} \partial_{t} u\in L^{p^{\prime}}(0,\,T;\,V_{0}^{\prime}) \right\},\\ X_{0}^{p}(0,\,T;\,\Omega)&\coloneqq & \left\{ u\in L^{p}(0,\,T;\,V_{0})\mathrel{}\middle|\mathrel{} \partial_{t} u\in L^{p^{\prime}}(0,\,T;\,V_{0}^{\prime}) \right\}, \end{array}\]
where $V\coloneqq W^{1,\,p}(\Omega)\cap L^{2}(\Omega)$.
From the Gelfand triple $V_{0}\hookrightarrow L^{2}(\Omega)\hookrightarrow V_{0}^{\prime}$, the inclusion $X_{0}^{p}(0,\,T;\,\Omega)\subset C^{0}(\lbrack 0,\,T\rbrack;\,L^{2}(\Omega))$ follows by the Lions--Magenes lemma \cite[Chapter III, Proposition 1.2]{Showalter MR1422252}.
\begin{definition}\label{Def: Weak sol}\upshape
A function $u\in X^{p}(0,\,T;\,\Omega)\cap C^{0}(\lbrack 0,\,T\rbrack;\,L^{2}(\Omega))$ is called a weak solution to (\ref{Eq (Section 1): main eq}) when there exists $Z\in L^{\infty}(\Omega_{T};\,{\mathbb R}^{n})$ such that
\begin{equation}\label{Eq (Section 1): Subgradient}
Z(x,\,t)\in \partial E_{1}(\nabla u(x,\,t))\quad \text{for a.e.~}(x,\,t)\in\Omega_{T},
\end{equation}
and
\begin{equation}\label{Eq (Section 1): Weak form in def}
\int_{0}^{t}\langle \partial_{t}u,\,\varphi\rangle_{V_{0}^{\prime},\,V_{0}}\,{\mathrm d}t+\iint_{\Omega_{T}}\left\langle Z+\nabla E_{p}(\nabla u)\mathrel{}\middle|\mathrel{} \nabla\varphi\right\rangle\,{\mathrm d}x {\mathrm d}t=0
\end{equation}
for all $\varphi\in X_{0}^{p}(0,\,T;\,\Omega)$. Here $\partial E_{1}$ denotes the subdifferential of $E_{1}$, defined as
\[\partial E_{1}(z)\coloneqq \left\{\zeta\in {\mathbb R}^{n}\mathrel{}\middle|\mathrel{} E_{1}(w)\ge E_{1}(z)+\langle \zeta\mid w-z\rangle\text{ for all }w\in{\mathbb R}^{n}\right\}\quad \text{for }z\in{\mathbb R}^{n}.\]
\end{definition}

The main result is the following Theorem \ref{Thm}.
\begin{theorem}\label{Thm}
Let $n$, $p$ and $E=E_{1}+E_{p}$ satisfy (\ref{Eq (Section 1): Subcritical Range}) and (\ref{Eq (Section 1): C-1-bound of Ep})--(\ref{Eq (Section 1): (Assumption) modulus of continuity of Hess E1}).
Assume that a function $u$ is a weak solution to (\ref{Eq (Section 1): main eq}) in $\Omega_{T}$. If (\ref{Eq (Section 1): Higher Integrability}) is satisfied, then the spatial gradient $\nabla u$ is continuous in $\Omega_{T}$.
\end{theorem}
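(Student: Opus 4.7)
The plan is to follow the truncation-and-approximation program of Section \ref{Subsect: St}. Fix $\delta\in(0,\,1)$, an inner cylinder $Q\Subset Q'\Subset\Omega_{T}$, and consider weak solutions $u_{\varepsilon}$ of the uniformly parabolic regularized equation $\partial_{t}u_{\varepsilon}-\divx(\nabla E^{\varepsilon}(\nabla u_{\varepsilon}))=0$ with $E^{\varepsilon}=\rho_{\varepsilon}\ast E$, taking initial and boundary values obtained by suitable mollification of $u$ on the parabolic boundary of $Q'$. Because \cite[Theorem 2.8]{T-parabolic} already supplies uniform local H\"{o}lder estimates for ${\mathcal G}_{2\delta,\,\varepsilon}(\nabla u_{\varepsilon})$ under a uniform local $L^{\infty}$-bound on $\nabla u_{\varepsilon}$, valid for every $p\in(1,\,\infty)$, the entire novelty in the subcritical range lies in producing that bound together with the $L^{p}$-strong convergence $\nabla u_{\varepsilon}\to\nabla u$ despite the failure of the compact embedding (\ref{Eq (Section 1): CPT}).

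The first step is a local $L^{\infty}$-bound on $u$, from which a uniform $L^{\infty}$-bound on $u_{\varepsilon}$ follows by the comparison and weak maximum principles applied to (\ref{Eq (Section 1): Approximation general}) with $f_{\varepsilon}\equiv 0$. I would test the equation with truncated powers $\lvert u\rvert^{q-2}u$ times a space--time cutoff, combine with the Gagliardo--Nirenberg--Sobolev inequality on each time slice, and use $u\in L_{\mathrm{loc}}^{s}$ with $s>n(2-p)/p$ to close the recursion, following the subcritical $p$-Laplace Moser scheme of \cite[Theorem 2]{Choe MR1135917} and \cite[Chapter 8, A.2]{MR2865434}. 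The one-Laplace flux $\nabla E_{1}$ is bounded by positive $1$-homogeneity (\ref{Eq (Section 1): Positive 1-hom}), so after integration by parts it contributes only lower-order terms controlled by the cutoff, and this feature passes uniformly to $\nabla E^{\varepsilon}$ for $\varepsilon\in(0,\,\delta/8)$.

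For the uniform gradient bound, I would formally differentiate the regularized equation in $x_{j}$ and test with expressions of the form $\lvert\nabla u_{\varepsilon}\rvert^{q-2}\partial_{x_{j}}u_{\varepsilon}\,\zeta^{2}\,\chi$ summed over $j$, where $\zeta$ is a space--time cutoff and $\chi=\chi(\lvert\nabla u_{\varepsilon}\rvert)$ is a smooth cutoff supported where $\lvert\nabla u_{\varepsilon}\rvert$ is bounded away from zero, then iterate in $q$. In the subcritical range this recursion does not close on $\nabla u_{\varepsilon}$ alone: the $L^{\infty}$-bound on $u_{\varepsilon}$ must be invoked to absorb the lower-order terms produced by the cutoff derivatives. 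The main obstacle, and the crucial technical departure from \cite[Chapter 9]{BDGLS}, is precisely the design of test functions confined to the non-degenerate region of $u_{\varepsilon}$, so that $\nabla^{2}E^{\varepsilon}\geqslant c\lvert\nabla u_{\varepsilon}\rvert^{p-2}\mathrm{id}_{n}$ with $c>0$ independent of $\varepsilon$; this preserves the coercivity of the principal part in the limit $\varepsilon\to 0$ and prevents the homogeneous $E_{1}$-contribution from destabilizing the iteration.

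Once $\nabla u_{\varepsilon}$ is uniformly bounded in $L^{\infty}(Q)$, \cite[Theorem 2.8]{T-parabolic} delivers uniform H\"{o}lder estimates for ${\mathcal G}_{2\delta,\,\varepsilon}(\nabla u_{\varepsilon})$ with modulus depending on $\delta$ but not on $\varepsilon$. Combining Ascoli--Arzel\`{a} with the $L^{p}$-strong convergence $\nabla u_{\varepsilon}\to\nabla u$---established by testing the difference equation, exploiting strict monotonicity of $\nabla E^{\varepsilon}$, and using the $C^{0}(\lbrack 0,\,T\rbrack;\,L^{2}(\Omega))$ time-regularity of weak solutions as a substitute for the lost compactness (\ref{Eq (Section 1): CPT})---shows ${\mathcal G}_{2\delta}(\nabla u)$ to be locally H\"{o}lder continuous for every $\delta\in(0,\,1)$. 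Since $\lvert {\mathcal G}_{2\delta}(z)-z\rvert\le 2\delta$ uniformly in $z\in{\mathbb R}^{n}$, sending $\delta\to 0$ realises $\nabla u$ as a locally uniform limit of locally continuous functions, yielding the claimed continuity.
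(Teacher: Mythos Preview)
Your proposal is correct and follows essentially the same approach as the paper: local $L^{\infty}$-bound of $u$ by Moser iteration (Proposition \ref{Prop: L-infty bounds}), transfer to $u_{\varepsilon}$ via the comparison and weak maximum principles (Corollary \ref{Lemma: WMP}), a two-stage uniform gradient bound exploiting test functions supported in the non-degenerate region together with $u_{\varepsilon}\in L^{\infty}_{\mathrm{loc}}$ (Theorem \ref{Thm: Grad Bound}), then \cite[Theorem 2.8]{T-parabolic} plus Arzel\`{a}--Ascoli and $\delta\to 0$. One minor deviation worth flagging: the paper takes $u$ itself, not a mollification, as the Dirichlet datum $u_{\star}$ for $u_{\varepsilon}$ on the inner cylinder, so that the monotonicity-based strong convergence (Proposition \ref{Prop: Convergence}) identifies the limit as $u$ directly by uniqueness; mollifying the boundary data would force an additional and unnecessary limiting step.
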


The contents of this paper are as follows.
In Section \ref{Sect:Pre}, we briefly mention basic properties of $E_{1}$ and some composite functions.
There, we also note fundamental iteration lemmata, which are fully used in a priori bound estimates.
Section \ref{Sect:Approximation} mainly provides the strong convergence of a parabolic approximate equation under a suitable Dirichlet boundary condition (Proposition \ref{Prop: Convergence}).
There, some basic properties concerning $E^{\varepsilon}  $ are also mentioned.
Section \ref{Sect:Bound of u} aims to verify the local $L^{\infty}$-bound of $u$ and $u_{\varepsilon}$.
The former is shown by Moser's iteration in Section \ref{Subsect: Moser}.
The latter is proved by the comparison principle (Proposition \ref{Prop: CP}) and the weak maximum principle (Corollary \ref{Lemma: WMP}) in Section \ref{Subsect: CP}.
Section \ref{Sect: L-p to L-infty} establishes local $L^{q}$-bounds of $\nabla u_{\varepsilon}$ for $q\in(p,\,\infty\rbrack$.
After deducing local energy estimates in Section \ref{Subsect: Energy}, we complete the case $q\in(p,\,\infty)$ by the condition $u_{\varepsilon}\in L_{\mathrm{loc}}^{\infty}$ (Proposition \ref{Prop: Higher integrability}), and the remaining one $q=\infty$ by Moser's iteration (Proposition \ref{Prop: Moser's iteration}) in Section \ref{Sect:GradientBound}.
The main result in Section \ref{Sect: L-p to L-infty} is that the uniform local bound of $\nabla u_{\varepsilon}$ follow from that of $u_{\varepsilon}$ and the uniform $L^{p}$-bound of $\nabla u_{\varepsilon}$ (Theorem \ref{Thm: Grad Bound}).
Section \ref{Sect:MainTheorem} aims to show Theorem \ref{Thm}.
There, a priori H\"{o}lder estimates of ${\mathcal G}_{2\delta,\,\varepsilon}(\nabla u_{\varepsilon})$ (Theorem \ref{Thm: A priori Hoelder}) is used without proof, since this is already shown in \cite[Theorem 2.8]{T-parabolic}.
In Section \ref{Subsect: A priori Hoelder}, however, we would like to mention brief sketches of the proof of Theorem \ref{Thm: A priori Hoelder} for the reader's convenience.
Finally in Section \ref{Subsect: Last Subsection}, we give the proof of Theorem \ref{Thm} by Proposition \ref{Prop: Convergence}, Corollary \ref{Lemma: WMP}, Theorems \ref{Thm: Grad Bound} and \ref{Thm: A priori Hoelder}.

\section{Preliminary}\label{Sect:Pre}
\subsection{Basic properties of positively one-homogeneous density}
We briefly note the basic properties of the positively one-homogeneous function $E_{1}$.

The subdifferential $\partial E_{1}$ is given by
\[\partial E_{1}(z)=\left\{\zeta\in{\mathbb R}^{n}\mathrel{}\middle|\mathrel{} \langle\zeta \mid z \rangle=E_{1}(z),\, \langle w\mid z\rangle\le 1\text{ for all }w\in C_{E_{1}}\right\},\]
where $C_{E_{1}}\coloneqq \{w\in{\mathbb R}^{n}\mid E_{1}(w)\le 1\}$ (see \cite[Theorem 1.8]{MR2033382}).
In particular, for any vector fields $\nabla u\in L^{1}(\Omega_{T};\,{\mathbb R}^{n})$ and $Z\in L^{\infty}(\Omega_{T};\,{\mathbb R}^{n})$ that satisfy (\ref{Eq (Section 1): Subgradient}), there holds
\begin{equation}\label{Eq (Section 2): Euler's id}
  \langle Z\mid \nabla u\rangle=E_{1}(\nabla u)\quad \text{a.e.~in }\Omega_{T},
\end{equation}
which is often called Euler's identity.

By (\ref{Eq (Section 1): Positive 1-hom}), it is easy to check that
\[\nabla E_{1}(kz)=\nabla E_{1}(z)\quad \text{and}\quad \nabla^{2}E_{1}(kz)=k^{-1}\nabla^{2}E_{1}(z)\]
for all $z\in{\mathbb R}^{n}\setminus\{0\}$ and $k\in(0,\,\infty)$.
In particular, we have
\begin{equation}\label{Eq (Section 2): Bound of Z}
  \partial E_{1}(z)\subset \left\{w\in {\mathbb R}^{n}\mathrel{}\middle|\mathrel{}\lvert w\rvert\le K_{0} \right\}\quad \text{for all }z\in{\mathbb R}^{n},
\end{equation}
\begin{equation}\label{Eq (Section 2): Bound of nabla E1}
  \lvert \nabla E_{1}(z)\rvert\le K_{0}\quad \text{for all }z\in{\mathbb R}^{n}\setminus\{ 0\},
\end{equation}
\begin{equation}\label{Eq (Section 2): Order of E1 Hessian}
  O_{n}\leqslant \nabla^{2}E_{1}(z)\leqslant \frac{K_{0}}{\lvert z\rvert}\mathrm{id}_{n}\quad \text{for all }z\in{\mathbb R}^{n}\setminus\{0\},
\end{equation}
for some constant $K_{0}\in(0,\,\infty)$.

\subsection{Composite functions}\label{Subsect: psi}
Throughout this paper, we let $\psi\colon{\mathbb R}_{\ge 0}\to {\mathbb R}_{\ge 0}$ be a bounded function that is continuously differentiable in ${\mathbb R}_{>0}$ except at finitely many points.
Also, we assume that the derivative $\psi^{\prime}$ is non-negative and its support is compactly supported in ${\mathbb R}_{\ge 0}$.
In particular, $\psi$ is non-decreasing, and becomes constant for sufficiently large $\sigma$.
Corresponding to this $\psi$, we define the convex function $\Psi\colon{\mathbb R}_{\ge 0}\to{\mathbb R}_{\ge 0}$ as
\begin{equation}\label{Eq (Section 2): Convex Psi}
\Psi(\sigma)\coloneqq \int_{0}^{\sigma}\tau\psi(\tau)\,{\mathrm d}\tau\quad \text{for }\sigma\in{\mathbb R}_{\ge 0}.
\end{equation}
By the definition of $\Psi$ and the monotonicity of $\psi$, it is clear that
\begin{equation}\label{Eq (Section 2): Psi-bounds}
\Psi(\sigma)\le \sigma^{2}\psi(\sigma)\quad \text{for all }\sigma\in{\mathbb R}_{\ge 0}.
\end{equation}

In our proof of local bound estimates, we mainly choose $\psi\colon{\mathbb R}_{\ge 0}\to {\mathbb R}_{\ge 0}$ as either
\begin{equation}\label{Eq (Section 2): psi-alpha-M}
\psi_{\alpha,\,M}(\sigma)\coloneqq (\sigma\wedge M)^{\alpha}\quad \text{for }\sigma\in{\mathbb R}_{\ge 0}
\end{equation}
or 
\begin{equation}\label{Eq (Section 2): psi-alpha-M tilde}
{\tilde \psi}_{\alpha,\,M}(\sigma)\coloneqq \sigma^{\alpha}\left(1-\sigma^{-1}\right)_{+}\wedge M^{\alpha}\left(1-M^{-1}\right)\quad \text{for }\sigma\in{\mathbb R}_{\ge 0},
\end{equation}
where $\alpha\in\lbrack 0,\,\infty)$ and $M\in(1,\,\infty)$.
For $\psi_{\alpha,\,M}$ or ${\tilde \psi}_{\alpha,\,M}$, the correponding $\Psi$ defined as (\ref{Eq (Section 2): Convex Psi}) is denoted by $\Psi_{\alpha,\,M}$ or ${\tilde \Psi}_{\alpha,\,M}$ respectively.
When $M\to\infty$, the monotone convergences
\begin{equation}\label{Eq (Section 2): Monotone conv of psi-s}
\psi_{\alpha,\,M}(\sigma)\nearrow \sigma^{\alpha},\quad {\tilde\psi}_{\alpha,\,M}(\sigma)\nearrow \sigma^{\alpha}\left(1-\sigma^{-1}\right)_{+}, \quad \Psi_{\alpha,\,M}(\sigma)\nearrow \frac{\sigma^{\alpha+2}}{\alpha+2}
\end{equation}
hold for every $\sigma\in{\mathbb R}_{\ge 0}$, which result is used later when showing various local $L^{\infty}$-estimates.
Also, by direct computations, we can easily notice the following (\ref{Eq (Section 2): psi estimate 0})--(\ref{Eq (Section 2): psi-tilde estimate 2}):
\begin{equation}\label{Eq (Section 2): psi estimate 0}
  \left(\psi_{\alpha,\,M}(\sigma)\right)^{r}\left(\psi_{\alpha,\,M}^{\prime}(\sigma)\right)^{1-r}\sigma^{r} \le \alpha^{r}\psi_{\alpha,\,M}(\sigma)\quad \textrm{for all }\sigma\in {\mathbb R}_{>0} \setminus\{\,1,\,M\,\},
\end{equation}
\begin{equation}\label{Eq (Section 2): psi-tilde estimate 1}
{\tilde\psi}_{\alpha,\,M}(\sigma)+\sigma{\tilde \psi}_{\alpha,\,M}^{\prime}(\sigma)\le (\alpha+1)\psi_{\alpha,\,M}(\sigma)\chi_{\{\sigma>1\}}(\sigma)\quad \textrm{for all }\sigma\in {\mathbb R}_{>0} \setminus\{\,1,\,M\,\},
\end{equation}
\begin{equation}\label{Eq (Section 2): psi-tilde estimate 2}
  \sigma^{\alpha+2}\le 1+\sigma^{\alpha+p}+\lim_{M\to\infty}\left(\sigma^{2}{\tilde \psi}_{\alpha,\,M}(\sigma)\right)\quad \text{for all }\sigma\in{\mathbb R}_{\ge 0}.
\end{equation}
Here $r\in(1,\,\infty)$ is a fixed constant.

\subsection{Iteration lemmata}
Without proofs, we infer two basic lemmata, shown by standard iteration arguments (see \cite[Lemma V.3.1]{MR717034} and \cite[Lemma 4.2]{T-parabolic} for the proof).
\begin{lemma}\label{Lemma: Absorbing Iteration}
Fix $R_{1},\,R_{2}\in{\mathbb R}_{>0}$ with $R_{1}<R_{2}$.
Assume that a bounded function $f\colon \lbrack R_{1},\,R_{2} \rbrack\to{\mathbb R}_{\ge 0}$ admits the constants $A,\,\alpha\in {\mathbb R}_{>0}$, $B\in{\mathbb R}_{\ge 0}$, and $\theta\in (0,\,1)$, such that there holds
\[f(r_{1})\le \theta f(r_{2})+\frac{A}{(r_{2}-r_{1})^{\alpha}}+B\]
for any $r_{1},\,r_{2}\in\lbrack R_{1},\,R_{2}\rbrack$ with $r_{1}<r_{2}$.
Then, $f$ satisfies
\[f(R_{1})\le C(\alpha,\,\theta)\left[\frac{A}{(R_{2}-R_{1})^{\alpha}}+B \right].\]
\end{lemma}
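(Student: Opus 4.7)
The plan is to iterate the hypothesized inequality along a geometric sequence of radii accumulating to $R_{2}$, so that the self-referential term $\theta f(r_{2})$ is absorbed after summing a convergent geometric series. Concretely, I would fix a parameter $\tau\in(0,\,1)$ (to be chosen shortly) and define $r_{i}\coloneqq R_{2}-(R_{2}-R_{1})\tau^{i}$ for $i\in{\mathbb Z}_{\ge 0}$. Then $r_{0}=R_{1}$, $r_{i}\nearrow R_{2}$, and $r_{i+1}-r_{i}=(R_{2}-R_{1})(1-\tau)\tau^{i}$. Applying the hypothesis with $r_{1}=r_{i}$ and $r_{2}=r_{i+1}$ yields
\[
f(r_{i})\le \theta f(r_{i+1})+\frac{A\,\tau^{-\alpha i}}{(R_{2}-R_{1})^{\alpha}(1-\tau)^{\alpha}}+B.
\]

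Next I would iterate this bound $k$ times starting from $i=0$, which produces
\[
f(R_{1})\le \theta^{k}f(r_{k})+\sum_{i=0}^{k-1}\theta^{i}\!\left[\frac{A\,\tau^{-\alpha i}}{(R_{2}-R_{1})^{\alpha}(1-\tau)^{\alpha}}+B\right].
\]
The decisive choice is to pick $\tau\in(0,\,1)$ so that $\theta\tau^{-\alpha}<1$; taking for instance $\tau\coloneqq (1+\theta^{1/\alpha})/2$ works and yields explicit constants depending only on $\theta$ and $\alpha$. With this choice, the geometric series $\sum_{i\ge 0}(\theta\tau^{-\alpha})^{i}$ and $\sum_{i\ge 0}\theta^{i}$ both converge, and the boundedness of $f$ on $[R_{1},\,R_{2}]$ (combined with $\theta<1$) forces $\theta^{k}f(r_{k})\to 0$ as $k\to\infty$.

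Passing to the limit $k\to\infty$, I would obtain
\[
f(R_{1})\le \frac{1}{1-\theta\tau^{-\alpha}}\cdot\frac{A}{(R_{2}-R_{1})^{\alpha}(1-\tau)^{\alpha}}+\frac{B}{1-\theta},
\]
which is the claimed estimate with $C(\alpha,\theta)$ depending only on the exhibited factors. There is no real obstacle here; the only delicate point is verifying that the boundedness hypothesis on $f$ is used correctly to kill the residual term $\theta^{k}f(r_{k})$, and that the constant $\tau$ can be chosen purely in terms of $\alpha$ and $\theta$ so that $C(\alpha,\theta)$ is independent of $A$, $B$, $R_{1}$, $R_{2}$, and $f$ itself.
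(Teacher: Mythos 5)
Your proof is correct and is exactly the standard argument from Giaquinta~\cite[Lemma~V.3.1]{MR717034}, which is the reference the paper cites in lieu of giving a proof: choose a geometric partition $r_i = R_2 - (R_2-R_1)\tau^i$ with $\tau\in(\theta^{1/\alpha},1)$ so that the self-improvement factor $\theta\tau^{-\alpha}<1$, iterate, and sum the two convergent geometric series, using boundedness of $f$ to discard $\theta^k f(r_k)$. Your explicit choice $\tau=(1+\theta^{1/\alpha})/2$ is valid, and the resulting constant depends only on $\alpha$ and $\theta$ as required.
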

\begin{lemma}\label{Lemma: Moser iteration lemma}
Let $\kappa\in(1,\,\infty)$ be a constant.
Assume that the sequences $\{Y_{l}\}_{l=0}^{\infty}\subset {\mathbb R}_{\ge 0}$, $\{p_{l}\}_{l=0}^{\infty}\subset \lbrack 1,\,\infty)$ admit the constants $A$, $B\in(1,\,\infty)$ and $\mu\in{\mathbb R}_{>0}$ such that
\[\left\{\begin{array}{rcl} Y_{l+1}^{p_{l+1}}& \le & \left(AB^{l}Y_{l}^{p_{l}} \right)^{\kappa}, \\ p_{l}& \ge & \mu\left(\kappa^{l}-1\right), \end{array}  \right. \quad \text{for all }l\in{\mathbb Z}_{\ge 0},\]
and $\kappa^{l}p_{l}^{-1}\to \mu^{-1}$ as $l\to\infty$ .  
Then, we have
\[\limsup_{l\to\infty}Y_{l}\le A^{\frac{\kappa^{\prime}}{\mu}}B^{\frac{(\kappa^{\prime})^{2}}{\mu}}Y_{0}^{\frac{p_{0}}{\mu}},\]
where $\kappa^{\prime}\coloneqq \kappa/(\kappa-1)\in(1,\,\infty)$ denotes the H\"{o}lder conjugate exponent of $\kappa$.
\end{lemma}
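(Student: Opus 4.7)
The natural plan is to reduce the multiplicative recursion to an additive one by taking logarithms and rescaling by $\kappa^{l}$. If $Y_{l_{0}}=0$ for some index $l_{0}$, the recursion forces $Y_{l}=0$ for every $l\ge l_{0}$ and the bound is trivial, so I would first reduce to the case $Y_{l}>0$ for all $l$. Setting $u_{l}\coloneqq p_{l}\kappa^{-l}\log Y_{l}$, the hypothesis $Y_{l+1}^{p_{l+1}}\le (AB^{l}Y_{l}^{p_{l}})^{\kappa}$ becomes, after taking logarithms and dividing by $\kappa^{l+1}$,
\[
u_{l+1}\le u_{l}+\kappa^{-l}\log A + l\kappa^{-l}\log B.
\]

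Telescoping this inequality from $l=0$ to $l=L-1$ gives
\[
u_{L}\le u_{0}+(\log A)\sum_{l=0}^{L-1}\kappa^{-l}+(\log B)\sum_{l=0}^{L-1}l\kappa^{-l}.
\]
The two geometric-type series converge, with $\sum_{l=0}^{\infty}\kappa^{-l}=\kappa^{\prime}$ and $\sum_{l=0}^{\infty}l\kappa^{-l}=\kappa/(\kappa-1)^{2}=(\kappa^{\prime})^{2}/\kappa\le(\kappa^{\prime})^{2}$, so letting $L\to\infty$ yields
\[
\limsup_{L\to\infty}u_{L}\le p_{0}\log Y_{0}+\kappa^{\prime}\log A+(\kappa^{\prime})^{2}\log B.
\]

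Finally, I would translate the bound on $u_{l}$ back into a bound on $Y_{l}$. The assumption $\kappa^{l}/p_{l}\to \mu^{-1}$ gives $p_{l}/\kappa^{l}\to\mu>0$, so along any subsequence realising $\limsup_{l\to\infty}\log Y_{l}$ the sequence $u_{l}$ converges to $\mu$ times that $\limsup$. Combining with the bound above and exponentiating produces exactly
\[
\limsup_{l\to\infty}Y_{l}\le A^{\kappa^{\prime}/\mu}B^{(\kappa^{\prime})^{2}/\mu}Y_{0}^{p_{0}/\mu}.
\]
No step is a genuine obstacle; the only mildly delicate point is this last conversion, since $\log Y_{l}$ may be negative, which forces a subsequence argument rather than a direct division by $p_{l}/\kappa^{l}$. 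The slight slack between the sharper exponent $(\kappa^{\prime})^{2}/\kappa$ that the computation actually yields and the cleaner $(\kappa^{\prime})^{2}$ recorded in the statement is harmless, since $B>1$ means enlarging the exponent only weakens the inequality.
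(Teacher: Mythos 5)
Your proof is correct: the paper itself states this lemma without proof, deferring to \cite[Lemma V.3.1]{MR717034} and \cite[Lemma 4.2]{T-parabolic}, and your logarithmic substitution $u_{l}=p_{l}\kappa^{-l}\log Y_{l}$ followed by telescoping is exactly the standard unfolding of the recursion $Y_{L}^{p_{L}}\le A^{\sum\kappa^{j}}B^{\sum(L-j)\kappa^{j}}Y_{0}^{p_{0}\kappa^{L}}$ written additively, so the approach is essentially the same, and your handling of the degenerate cases ($Y_{l_{0}}=0$, $\limsup\log Y_{l}=\pm\infty$) via a subsequence is sound. The only point worth noting is that you never use the hypothesis $p_{l}\ge\mu(\kappa^{l}-1)$ — the limit $\kappa^{l}p_{l}^{-1}\to\mu^{-1}$ alone suffices for the $\limsup$ conclusion (that lower bound is what one would need for a bound at each finite stage) — which is harmless since you are proving the statement from weaker assumptions.
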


\section{Approximation problem}\label{Sect:Approximation}
\subsection{Approximation of energy density}
We would like to explain the approximation of $E=E_{1}+E_{p}$, based on the Friedrichs mollifier.
More precisely, we introduce a non-negative, spherically symmetric function $\rho\in C_{\mathrm c}^{\infty}({\mathbb R}^{n})$ such that $\lVert \rho\rVert_{L^{1}}=1$ and the support of $\rho$ is the closed unit ball centered at the origin. 
For the approximation parameter $\varepsilon\in(0,\,1)$, we define \(\rho_{\varepsilon}(z)\coloneqq \varepsilon^{-n}\rho(z/\varepsilon)\) for $z\in{\mathbb R}^{n}$, and relax the energy density $E_{s}\, (s\in\{\,1,\,p\,\})$ by the non-negative function, defined as
\begin{equation}\label{Eq (Section 3): Def of E-s-epsilon}
  E_{s,\,\varepsilon}(z)\coloneqq \int_{{\mathbb R}^{n}}\rho_{\varepsilon}(y)E_{s}(z-y)\,{\mathrm d}y\quad \textrm{for }z\in{\mathbb R}^{n}.
\end{equation}
Then, by (\ref{Eq (Section 1): C-1-bound of Ep})--(\ref{Eq (Section 1): Ellipticity of Ep}) and (\ref{Eq (Section 2): Bound of nabla E1})--(\ref{Eq (Section 2): Order of E1 Hessian}), the relaxed density $E^{\varepsilon}  \coloneqq E_{1,\,\varepsilon}+E_{p,\,\varepsilon}$ satisfy
\begin{equation}\label{Eq (Section 3): Bounds of nabla E-epsilon}
  \lvert \nabla E^{\varepsilon}  (z)\rvert\le \Lambda(\varepsilon+\lvert z\rvert^{2})^{(p-1)/2}+K,
\end{equation}
\begin{equation}\label{Eq (Section 3): Estimates on eigenvalues}
  \lambda \left(\varepsilon^{2}+\lvert z\rvert^{2} \right)^{p/2-1}\mathrm{id}_{n}\leqslant \nabla^{2}E^{\varepsilon}  (z)\leqslant \left(\Lambda \left(\varepsilon^{2}+\lvert z\rvert^{2} \right)^{p/2-1}+\frac{K}{\sqrt{\varepsilon^{2}+\lvert z\rvert^{2}}}\right)\mathrm{id}_{n}
\end{equation}
\begin{equation}\label{Eq (Section 3): Strong Monotonicity of E-p-epsilon}
  \left\langle \nabla E_{p,\,\varepsilon}(z)-\nabla E_{p,\,\varepsilon}(w)\mathrel{}\middle|\mathrel{}z-w \right\rangle \ge \lambda\left(\varepsilon^{2}+\lvert z\rvert^{2}+\lvert w\rvert^{2}\right)^{p/2-1}\lvert z-w\rvert^{2},
\end{equation}
\begin{equation}\label{Eq (Section 3): Strong Monotonicity of E-epsilon}
  \left\langle \nabla E^{\varepsilon}  (z)-\nabla E^{\varepsilon}  (w)\mathrel{}\middle|\mathrel{}z-w \right\rangle \ge \lambda\left(\varepsilon^{2}+\lvert z\rvert^{2}+\lvert w\rvert^{2}\right)^{p/2-1}\lvert z-w\rvert^{2},
\end{equation}
for all $z$, $w\in{\mathbb R}^{n}$.
Here $\lambda\in(0,\,\lambda_{0})$, $\Lambda\in(\Lambda_{0},\,\infty)$, and $K\in(K_{0},\,\infty)$ are constants (see \cite[\S 2]{T-scalar} for the detailed computations).
Letting $w=0$ and $\varepsilon\to 0$ in (\ref{Eq (Section 3): Strong Monotonicity of E-p-epsilon}), we have 
\begin{equation}\label{Eq (Section 3): Strong Monotonicity of E-p}
  \left\langle \nabla E_{p}(z)\mathrel{} \middle|\mathrel{}z\right\rangle\ge \lambda\lvert z\rvert^{p}\quad \text{for all }z\in{\mathbb R}^{n},
\end{equation}
which follows from $E_{p}\in C^{1}({\mathbb R}^{n})$ and $\nabla E_{p}(0)=0$.
Also, letting $w=0$ in (\ref{Eq (Section 3): Strong Monotonicity of E-epsilon}), we get 
\begin{equation}\label{Eq (Section 3): Strong Monotonicity of E-epsilon modified}
\left\langle \nabla E^{\varepsilon}  (z)-\nabla E^{\varepsilon}  (0)\mathrel{}\middle|\mathrel{}z \right\rangle\ge \lambda \left(\lvert z\rvert^{p}-\varepsilon^{p}\right)\quad \text{for all }z\in{\mathbb R}^{n}.
\end{equation}

As a special case of \cite[Lemma 2.8]{T-scalar}, we can use the following lemma.
 \begin{lemma}\label{Lemma: Fundamental Conv}
 The energy density $E^{\varepsilon}  =E_{1,\,\varepsilon}+E_{p,\,\varepsilon}\in C^{\infty}({\mathbb R}^{n})$, defined as (\ref{Eq (Section 3): Def of E-s-epsilon}) for each $\varepsilon\in(0,\,1)$, satisfies the following.
  \begin{enumerate}
    \item\label{Item 1/2} For each fixed $v\in L^{p}(\Omega_{T};\,{\mathbb R}^{n})$, we have 
    \[\nabla E^{\varepsilon}  (v)\to A_{0}(v)\quad \textrm{in}\quad L^{p^{\prime}}(\Omega_{T};\,{\mathbb R}^{n})\quad \textrm{as}\quad \varepsilon\to 0.\]
    Here the mapping $A_{0}\colon{\mathbb R}^{n}\rightarrow {\mathbb R}^{n}$ is defined as
    \[A_{0}\coloneqq \left\{\begin{array}{cc}
      \nabla E(z) & (z\neq 0), \\ 
      (\rho\ast \nabla E_{1})(0) & (z=0).
    \end{array} \right.\]
    \item\label{Item 2/2} Assume that a sequence $\{v_{\varepsilon_{k}}\}_{k}\subset L^{p}(\Omega_{T};\,{\mathbb R}^{n})$, where $\varepsilon_{k}\to 0$ as $k\to 0$, satisfies
      \[v_{\varepsilon_{k}}\to v_{0}\quad \textrm{in}\quad L^{p}(\Omega_{T};\,{\mathbb R}^{n})\quad \textrm{as}\quad k\to\infty\]
      for some $v_{0}\in L^{p}(\Omega_{T})$.
      Then, up to a subsequence, we have
      \[\left\{\begin{array}{rclcc}
        \nabla E_{p,\,\varepsilon_{k}}(v_{\varepsilon_{k}})&\to& \nabla E_{p}(v_{0})& \textrm{in}& L^{p^{\prime}}(\Omega_{T};\,{\mathbb R}^{n}),\\ 
      \nabla E_{1,\,\varepsilon_{k}}(v_{\varepsilon_{k}}) &\stackrel{}{\rightharpoonup} & Z & \textrm{in} &L^{\infty} (\Omega_{T};\,{\mathbb R}^{n}), 
      \end{array}  \right.\quad \textrm{as}\quad k\to\infty.\]
      Here the limit $Z\in L^{\infty}(\Omega_{T};\,{\mathbb R}^{n})$ satisfies 
      \[Z(x,\,t)\in\partial E_{1}(v_{0}(x,\,t))\quad \textrm{for a.e.~}(x,\,t)\in\Omega_{T}.\]
     \end{enumerate}
 \end{lemma}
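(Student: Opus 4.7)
The plan is to treat the two parts separately, reducing each to pointwise convergence plus a suitable dominated-convergence-type argument, and in part~\ref{Item 2/2} to use the convexity of the mollified densities to identify the weak-$\ast$ limit as a selection of $\partial E_{1}(v_{0})$.

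For part~\ref{Item 1/2}, I would first establish pointwise convergence $\nabla E^{\varepsilon}(z)\to A_{0}(z)$ for every $z\in{\mathbb R}^{n}$. Since $E_{p}\in C^{1}({\mathbb R}^{n})$, $\nabla E_{p,\varepsilon}=\rho_{\varepsilon}\ast\nabla E_{p}\to \nabla E_{p}$ pointwise on ${\mathbb R}^{n}$ by standard mollifier properties. For $E_{1}$, $\nabla E_{1}$ is continuous on ${\mathbb R}^{n}\setminus\{0\}$, so $\nabla E_{1,\varepsilon}(z)\to\nabla E_{1}(z)$ for $z\neq 0$; at $z=0$, the $0$-homogeneity of $\nabla E_{1}$ (a consequence of (\ref{Eq (Section 1): Positive 1-hom})) combined with the change of variables $y=\varepsilon w$ and spherical symmetry of $\rho$ gives $\nabla E_{1,\varepsilon}(0)=(\rho\ast\nabla E_{1})(0)$ independently of $\varepsilon$. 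Then $L^{p^{\prime}}$ convergence follows by the dominated convergence theorem from (\ref{Eq (Section 3): Bounds of nabla E-epsilon}), which yields $\lvert\nabla E^{\varepsilon}(v(x,t))\rvert^{p^{\prime}}\le C(1+\lvert v(x,t)\rvert^{p})\in L^{1}(\Omega_{T})$ uniformly in $\varepsilon\in(0,1)$.

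For the $p$-part of~\ref{Item 2/2}, I would extract a subsequence (not relabeled) so that $v_{\varepsilon_{k}}\to v_{0}$ a.e.\ in $\Omega_{T}$. Then $\nabla E_{p,\varepsilon_{k}}(v_{\varepsilon_{k}})\to\nabla E_{p}(v_{0})$ a.e., because $\nabla E_{p}$ is continuous and the mollified sequence is locally equicontinuous on compact sets of ${\mathbb R}^{n}$. The bound $\lvert\nabla E_{p,\varepsilon}(z)\rvert^{p^{\prime}}\le C(1+\lvert z\rvert^{p})$, together with the $L^{p}$-strong convergence of $v_{\varepsilon_{k}}$ (which yields equi-integrability of $\lvert v_{\varepsilon_{k}}\rvert^{p}$), then allows Vitali's convergence theorem to upgrade a.e.\ convergence to strong $L^{p^{\prime}}$ convergence. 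For the $E_{1}$-part, the uniform bound $\lvert\nabla E_{1,\varepsilon_{k}}(v_{\varepsilon_{k}})\rvert\le K_{0}$ coming from (\ref{Eq (Section 2): Bound of nabla E1}) gives weak-$\ast$ compactness in $L^{\infty}(\Omega_{T};\,{\mathbb R}^{n})$; extract $Z$ as a further subsequential limit.

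The main obstacle, and the crucial step, is to show $Z(x,t)\in\partial E_{1}(v_{0}(x,t))$ a.e. I would exploit the convexity of each $E_{1,\varepsilon_{k}}$: for any $w\in{\mathbb R}^{n}$,
\[
E_{1,\varepsilon_{k}}(w)\ge E_{1,\varepsilon_{k}}(v_{\varepsilon_{k}})+\bigl\langle\nabla E_{1,\varepsilon_{k}}(v_{\varepsilon_{k}})\bigm| w-v_{\varepsilon_{k}}\bigr\rangle \quad\text{a.e.\ in }\Omega_{T}.
\]
Multiplying by an arbitrary non-negative $\eta\in C_{\mathrm c}^{\infty}(\Omega_{T})$ and integrating, I pass to the limit $k\to\infty$ using: (i) $E_{1,\varepsilon_{k}}(w)\to E_{1}(w)$ pointwise with uniform control by the Lipschitz bound $E_{1}(z)\le K_{0}\lvert z\rvert$; (ii) $E_{1,\varepsilon_{k}}(v_{\varepsilon_{k}})\to E_{1}(v_{0})$ in $L^{1}(\Omega_{T})$ by the same Lipschitz estimate and $L^{p}$-strong convergence of $v_{\varepsilon_{k}}$; (iii) the weak-strong pairing $\iint\eta\bigl\langle\nabla E_{1,\varepsilon_{k}}(v_{\varepsilon_{k}})\bigm| w-v_{\varepsilon_{k}}\bigr\rangle\to\iint\eta\langle Z\mid w-v_{0}\rangle$. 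Taking $w$ in a countable dense subset of ${\mathbb R}^{n}$ and using the arbitrariness of $\eta\ge 0$ then yields $E_{1}(w)\ge E_{1}(v_{0}(x,t))+\langle Z(x,t)\mid w-v_{0}(x,t)\rangle$ for every $w\in{\mathbb R}^{n}$ and a.e.\ $(x,t)\in\Omega_{T}$, which is exactly the defining inclusion $Z(x,t)\in\partial E_{1}(v_{0}(x,t))$. The delicate point is precisely that on the facet $\{v_{0}=0\}$ the selection $Z$ is not uniquely determined pointwise, and only this convexity-passage argument pins it down as a valid subgradient.
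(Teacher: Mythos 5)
The paper itself does not prove Lemma~\ref{Lemma: Fundamental Conv}; it invokes it as a special case of \cite[Lemma 2.8]{T-scalar}. Your proposal therefore supplies an argument where the paper only cites, so a line-by-line comparison is not possible, but your proof is correct and self-contained.

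All three stages are sound. Part~\ref{Item 1/2}: the pointwise limit $\nabla E^{\varepsilon}(z)\to A_{0}(z)$ is handled correctly, including the observation that by $0$-homogeneity of $\nabla E_{1}$ (together with the change of variables $y=\varepsilon w$ and the evenness of $\rho$) one has $\nabla E_{1,\varepsilon}(0)=(\rho\ast\nabla E_{1})(0)$ independently of $\varepsilon$; the dominating function $C(1+\lvert v\rvert^{p})$ coming from (\ref{Eq (Section 3): Bounds of nabla E-epsilon}) and the identity $(p-1)p'=p$ is the right one for the $L^{p'}$ upgrade. For part~\ref{Item 2/2}, the a.e.\ convergence $\nabla E_{p,\varepsilon_{k}}(v_{\varepsilon_{k}})\to\nabla E_{p}(v_{0})$ follows, as you say, from continuity of $\nabla E_{p}$ and a dominated-convergence argument inside the mollification; Vitali (or generalized dominated convergence using the $L^{1}$-convergence of $\lvert v_{\varepsilon_{k}}\rvert^{p}$) then gives the $L^{p'}$ convergence. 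The identification $Z\in\partial E_{1}(v_{0})$ is the genuinely nontrivial point, and the convexity passage you outline is the standard and correct route: the subgradient inequality for each $E_{1,\varepsilon_{k}}$, the uniform Lipschitz constant $K_{0}$ to pass $E_{1,\varepsilon_{k}}(v_{\varepsilon_{k}})\to E_{1}(v_{0})$ in $L^{1}$, weak-$\ast$/strong pairing for the cross term, and a countable dense choice of $w$ combined with arbitrariness of $\eta\geq 0$ to localize. One small cosmetic remark: in (i) the Lipschitz bound is not really needed since $w$ is a fixed vector and $E_{1,\varepsilon_{k}}(w)\to E_{1}(w)$ is just a scalar limit; it is in (ii) that the uniform Lipschitz bound does the work.
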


\subsection{Convergence of approximate solutions}
Section \ref{Sect:Approximation} is concluded by verifying that $u_{\varepsilon}$, a weak solution to
\begin{equation}\label{Eq (Section 3): Approximate Equation}
\partial_{t}u_{\varepsilon}-\divx\left(\nabla E^{\varepsilon}  (\nabla u_{\varepsilon}) \right)=0
\end{equation}
in $\Omega_{T}$, converges to a weak solution to (\ref{Eq (Section 1): main eq}).

Let $u_{\star}\in X^{p}(0,\,T;\,\Omega)\cap C(\lbrack 0,\,T\rbrack;\,L^{2}(\Omega))$ be a fixed function.
By carrying out similar arguments in \cite{Lions MR0259693, Showalter MR1422252}, we find the unique weak solution of
\begin{equation}\label{Eq (Section 3): Dirichlet Prob approx}
\left\{\begin{array}{rclcc} \partial_{t}u_{\varepsilon}-\divx(\nabla E^{\varepsilon}  (\nabla u_{\varepsilon}))&=&0 & \text{in} &\Omega_{T},\\ 
u_{\varepsilon} &=&u_{\star} & \text{on} & \partial_{\mathrm{p}}\Omega_{T}.  \end{array} \right.
\end{equation}
More precisely, the solution $u_{\varepsilon}$ is in $u_{\star}+X_{0}^{p}(0,\,T;\,\Omega)$, satisfies (\ref{Eq (Section 3): Approximate Equation}) in $L^{p^{\prime}}(0,\,T;\,V_{0}^{\prime})$, and $(u_{\varepsilon}-u_{\star})(\,\cdot\,,\,0)=0$ in $L^{2}(\Omega)$.
The existence of the weak solution of (\ref{Eq (Section 3): Dirichlet Prob approx}) is shown by the Faedo--Galerkin method (see \cite[Chapitre 2]{Lions MR0259693}, \cite[\S III.4]{Showalter MR1422252} as related materials).
We would like to prove that $u_{\varepsilon}$ converges to the weak solution of
\begin{equation}\label{Eq (Section 3): Dirichlet Prob}
\left\{\begin{array}{rclcc} \partial_{t}u-\divx(\nabla E(\nabla u))&=&0 & \text{in} &\Omega_{T},\\ 
u &=&u_{\star} & \text{on} & \partial_{\mathrm{p}}\Omega_{T},  \end{array} \right.
\end{equation}
in the sense of Definition \ref{Def: Dirichlet sol} below.

\begin{definition}\label{Def: Dirichlet sol}\upshape
  Let $u_{\star}\in X^{p}(0,\,T;\,\Omega)\cap C^{0}(\lbrack 0,\,T\rbrack;\,L^{2}(\Omega))$.
  A function $u\in X^{p}(0,\,T;\,\Omega)$ is called the weak solution of (\ref{Eq (Section 3): Dirichlet Prob}) when the following two properties are satisfied.
  \begin{enumerate}
    \item $u\in u_{\star}+X_{0}^{p}(0,\,T;\,\Omega)\subset C^{0}(\lbrack 0,\,T\rbrack;\,L^{2}(\Omega))$ and $(u_{\varepsilon}-u_{\star})(\,\cdot\,,\,0)=0$ in $L^{2}(\Omega)$.
    \item $u_{\varepsilon}$ is a weak solution to (\ref{Eq (Section 1): main eq}) in $\Omega_{T}$ in the sense of Definition \ref{Def: Weak sol}.
  \end{enumerate}
\end{definition}
By a weak compactness argument and Lemma \ref{Lemma: Fundamental Conv}, we prove Proposition \ref{Prop: Convergence}.
\begin{proposition}\label{Prop: Convergence}
Fix arbitrary $u_{\star}\in X^{p}(0,\,T;\,\Omega)\cap C^{0}(\lbrack 0,\,T\rbrack;\,L^{2}(\Omega))$ and $\tau\in(0,\,T)$.
Let $u_{\varepsilon}\in u_{\star}+X_{0}^{p}(0,\,T;\,\Omega)$ be the unique weak solution of (\ref{Eq (Section 3): Dirichlet Prob approx}) for each $\varepsilon\in(0,\,1)$.
Then, there exists a decreasing sequence $\{\varepsilon_{k}\}_{k}\subset (0,\,1)$ such that 
\[\nabla u_{\varepsilon_{k}}\to \nabla u_{0}\quad \textrm{a.e.~in}\quad \Omega_{T-\tau}\quad \textrm{and}\quad  \textrm{strongly in}\quad L^{p}(\Omega_{T-\tau};\,{\mathbb R}^{n}),\]
where the limit function $u_{0}\in u_{\star}+X_{0}^{p}(0,\,T;\,\Omega)$ is the unique weak solution of (\ref{Eq (Section 3): Dirichlet Prob}) with $T$ replaced by $T-\tau$.
\end{proposition}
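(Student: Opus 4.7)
The plan is to combine standard parabolic energy estimates, weak compactness, and a Minty-type monotonicity argument localized in time, since the failure of the compact embedding (\ref{Eq (Section 1): CPT}) in the subcritical regime (\ref{Eq (Section 1): Subcritical Range}) prevents a direct Aubin--Lions extraction. First, testing (\ref{Eq (Section 3): Approximate Equation}) with $u_{\varepsilon}-u_{\star}\in X_{0}^{p}(0,\,T;\,\Omega)$ and using the coercivity and monotonicity recorded in (\ref{Eq (Section 3): Bounds of nabla E-epsilon})--(\ref{Eq (Section 3): Strong Monotonicity of E-epsilon modified}) together with the Lions--Magenes integration by parts, I obtain uniform bounds $\lVert u_{\varepsilon}\rVert_{L^{\infty}(0,\,T;\,L^{2})}+\lVert \nabla u_{\varepsilon}\rVert_{L^{p}(\Omega_{T})}\le C$; revisiting (\ref{Eq (Section 3): Approximate Equation}) then bounds $\partial_{t}u_{\varepsilon}$ uniformly in $L^{p^{\prime}}(0,\,T;\,V_{0}^{\prime})$. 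A diagonal extraction produces a subsequence $\varepsilon_{k}\to 0$ along which $u_{\varepsilon_{k}}\rightharpoonup u_{0}$ in $L^{p}(0,\,T;\,V)$, $\partial_{t}u_{\varepsilon_{k}}\rightharpoonup \partial_{t}u_{0}$ in $L^{p^{\prime}}(0,\,T;\,V_{0}^{\prime})$, $\nabla E_{p,\,\varepsilon_{k}}(\nabla u_{\varepsilon_{k}})\rightharpoonup \chi_{p}$ in $L^{p^{\prime}}(\Omega_{T};\,{\mathbb R}^{n})$, and $\nabla E_{1,\,\varepsilon_{k}}(\nabla u_{\varepsilon_{k}})\stackrel{*}{\rightharpoonup} Z$ in $L^{\infty}(\Omega_{T};\,{\mathbb R}^{n})$. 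Passing to these limits in (\ref{Eq (Section 3): Approximate Equation}) gives the distributional identity $\partial_{t}u_{0}-\divx(\chi_{p}+Z)=0$, while weak continuity of the $L^{2}$-trace yields $(u_{0}-u_{\star})(\,\cdot\,,\,0)=0$ in $L^{2}(\Omega)$.

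The main obstacle is upgrading the weak gradient convergence to strong $L^{p}$-convergence, since the Aubin--Lions embedding (\ref{Eq (Section 1): CPT}) is unavailable under (\ref{Eq (Section 1): Subcritical Range}), so one cannot conclude $u_{\varepsilon_{k}}\to u_{0}$ in $L^{p}(0,\,T;\,L^{2}(\Omega))$. To bypass this, I would introduce a non-increasing cutoff $\eta\in C^{1}([0,\,T])$ with $\eta\equiv 1$ on $[0,\,T-\tau]$ and $\eta(T)=0$, and use $(u_{\varepsilon_{k}}-u_{0})\eta\in X_{0}^{p}(0,\,T;\,\Omega)$ as a test function in both (\ref{Eq (Section 3): Approximate Equation}) and the limit equation. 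Subtracting the two resulting identities and applying the Lions--Magenes formula turns the time-derivative contribution into $-\tfrac{1}{2}\int_{0}^{T}\eta^{\prime}\lVert u_{\varepsilon_{k}}-u_{0}\rVert_{L^{2}(\Omega)}^{2}\,{\rm d}t\ge 0$, since $\eta^{\prime}\le 0$, $\eta(T)=0$, and the initial trace vanishes. Writing
\[
\nabla E^{\varepsilon_{k}}(\nabla u_{\varepsilon_{k}})-\chi_{p}-Z=\bigl[\nabla E^{\varepsilon_{k}}(\nabla u_{\varepsilon_{k}})-\nabla E^{\varepsilon_{k}}(\nabla u_{0})\bigr]+\bigl[\nabla E^{\varepsilon_{k}}(\nabla u_{0})-\chi_{p}-Z\bigr],
\]
using the strong $L^{p^{\prime}}$-convergence $\nabla E^{\varepsilon_{k}}(\nabla u_{0})\to A_{0}(\nabla u_{0})$ from Lemma \ref{Lemma: Fundamental Conv}, and exploiting $\nabla(u_{\varepsilon_{k}}-u_{0})\rightharpoonup 0$ in $L^{p}$, the second bracket contributes $o(1)$. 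Since the time-derivative contribution carries a favorable sign, the identity then collapses to
\[
0\le \iint_{\Omega_{T}}\eta\bigl[\nabla E^{\varepsilon_{k}}(\nabla u_{\varepsilon_{k}})-\nabla E^{\varepsilon_{k}}(\nabla u_{0})\bigr]\cdot\nabla(u_{\varepsilon_{k}}-u_{0})\,{\rm d}x{\rm d}t \longrightarrow 0.
\]
Invoking the strong monotonicity (\ref{Eq (Section 3): Strong Monotonicity of E-epsilon}) and restricting to $\Omega_{T-\tau}$ where $\eta\equiv 1$ gives
\[
\iint_{\Omega_{T-\tau}}\bigl(\varepsilon_{k}^{2}+\lvert\nabla u_{\varepsilon_{k}}\rvert^{2}+\lvert\nabla u_{0}\rvert^{2}\bigr)^{p/2-1}\lvert\nabla u_{\varepsilon_{k}}-\nabla u_{0}\rvert^{2}\,{\rm d}x{\rm d}t \longrightarrow 0,
\]
and H\"{o}lder's inequality with conjugate exponents $2/p$ and $2/(2-p)$, combined with the uniform $L^{p}$-bound on the gradients, upgrades this to $\nabla u_{\varepsilon_{k}}\to\nabla u_{0}$ strongly in $L^{p}(\Omega_{T-\tau};\,{\mathbb R}^{n})$; extracting a further subsequence yields the almost-everywhere statement.

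With strong gradient convergence established, the second part of Lemma \ref{Lemma: Fundamental Conv} identifies $\chi_{p}=\nabla E_{p}(\nabla u_{0})$ and $Z(x,\,t)\in\partial E_{1}(\nabla u_{0}(x,\,t))$ a.e.\ on $\Omega_{T-\tau}$, so $u_{0}$ is a weak solution of (\ref{Eq (Section 3): Dirichlet Prob}) on the restricted cylinder in the sense of Definition \ref{Def: Dirichlet sol}. Uniqueness of this weak solution, which promotes the subsequential convergence to full convergence, follows by testing the difference of any two such solutions by itself: the $\partial E_{1}$-monotonicity and the strict monotonicity (\ref{Eq (Section 3): Strong Monotonicity of E-p-epsilon}) of $\nabla E_{p}$ together force the gradient difference to vanish. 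The reduction from $T$ to $T-\tau$ is essentially forced in this scheme, reflecting the uncontrolled final-time trace in the Lions--Magenes formula, which the cutoff $\eta$ neutralizes only at the cost of losing information near $t=T$.
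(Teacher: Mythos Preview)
Your proof is correct and follows essentially the same route as the paper: uniform energy bounds, weak compactness, then strong gradient convergence on $\Omega_{T-\tau}$ via a non-increasing time cutoff combined with the monotonicity (\ref{Eq (Section 3): Strong Monotonicity of E-epsilon}), Lemma \ref{Lemma: Fundamental Conv}\ref{Item 1/2}, and the H\"older trick for $p<2$; identification of the limit and uniqueness are handled identically through Lemma \ref{Lemma: Fundamental Conv}\ref{Item 2/2} and the monotonicity of $\nabla E_{p}$ and $\partial E_{1}$. The only cosmetic difference is that you subtract the limit equation (so the cross term carries $\chi_{p}+Z$), while the paper tests only the approximate equation and disposes of the term $\int\langle\partial_{t}u_{0},\,u_{\varepsilon_{j}}-u_{0}\rangle$ directly by weak convergence.
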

Compared to Proposition \ref{Prop: CP}, \cite[Proposition 2.4]{T-parabolic} provides a similar convegence result for (\ref{Eq (Section 1): Approximation general}) in the case (\ref{Eq (Section 1): Supercritical Range}).
There the compact embedding (\ref{Eq (Section 1): CPT}) is used to deal with non-trivial external force terms, as explained in Section \ref{Subsect: Lit}.
Here we give the proof of Proposition \ref{Prop: Convergence} without using any compact embedding.
\begin{proof}
We set \(T_{0}\coloneqq T-\tau/2\), and $T_{1}\coloneqq T-\tau$.
We define 
\[\theta(t)\coloneqq \frac{(t-T_{0})_{+}}{T_{0}}\in\lbrack 0,\,1\rbrack\quad \text{for}\quad t\in\lbrack 0,\,T\rbrack.\]
To construct $u_{0}$, we first claim that
\[{\mathbf J}_{\varepsilon}\coloneqq \lVert u_{\varepsilon}-u_{\star} \rVert_{L^{p}(0,\,T_{1};\,V_{0})}+\lVert \partial_{t}u_{\varepsilon} \rVert_{L^{p^{\prime}}(0,\,T_{1};\,V_{0}^{\prime})}\] 
is bounded, uniformly for $\varepsilon\in (0,\,1)$.
To prove the boundedness of ${\mathbf J}_{\varepsilon}$, we test $\varphi\coloneqq (u_{\varepsilon}-u_{\star}) \theta$ into (\ref{Eq (Section 3): Approximate Equation}).
Integrating by parts, we have
\begin{align*}
& {\mathbf L}_{1}+{\mathbf L}_{2}\\
&\coloneqq -\frac{1}{2}\iint_{\Omega_{T_{0}}}(u_{\varepsilon}-u_{\star})^{2}\partial_{t}\theta\,{\mathrm d}x {\mathrm d}t+\iint_{\Omega_{T_{0}}}\left\langle \nabla E^{\varepsilon}  (\nabla u_{\varepsilon})-\nabla E^{\varepsilon}  (0) \mathrel{}\middle|\mathrel{}\nabla u_{\varepsilon} \right\rangle\theta \,{\mathrm d}x {\mathrm d}t\\
&=-\int_{0}^{T_{0}}\langle \partial_{t}u_{\star},\, (u_{\varepsilon}-u_{\star})\theta\rangle_{V_{0}^{\prime},\,V_{0}}\,{\mathrm d}t+\iint_{\Omega_{T_{0}}}\left\langle \nabla E^{\varepsilon}  (\nabla u_{\varepsilon})-\nabla E^{\varepsilon}  (0) \mathrel{}\middle|\mathrel{}\nabla u_{\star}\right\rangle\theta\,{\mathrm d}x{\mathrm d}t\\ 
&\eqqcolon -{\mathbf R}_{1}+{\mathbf R}_{2}.
\end{align*}
By (\ref{Eq (Section 3): Strong Monotonicity of E-epsilon modified}) and our choice of $\theta$, we have
\[{\mathbf L}_{1}+{\mathbf L}_{2}\ge \frac{1}{2T_{0}}\iint_{\Omega_{T_{0}}}\lvert u_{\varepsilon}-u_{\star}\rvert^{2}\,{\mathrm d}x{\mathrm d}t+\lambda\iint_{\Omega_{T_{0}}}\lvert\nabla u_{\varepsilon}\rvert^{p}\theta\,{\mathrm d}x{\mathrm d}t-\lambda\varepsilon^{p}\lvert \Omega_{T_{0}}\rvert.\]
To estimate ${\mathbf R}_{1}$, we use H\"{o}lder's inequality and Young's inequality to compute
\begin{align*}
\lvert{\mathbf R}_{1}\rvert
&\le \lVert \partial_{t}u_{\star} \rVert_{L^{p^{\prime}}(0,\,T_{0};\,V_{0}^{\prime})}\left(\int_{0}^{T}\left(\lVert \nabla (u_{\varepsilon}-u_{\star})\theta \rVert_{L^{p}(\Omega)}+\lVert (u_{\varepsilon}-u_{\star})\theta \rVert_{L^{2}(\Omega)} \right)^{p}\,{\mathrm d}t\right)^{1/p} \\ 
&\le \frac{\lambda}{3}\iint_{\Omega_{T_{0}}}\lvert\nabla u_{\varepsilon}\rvert^{p}\theta\,{\mathrm d}x{\mathrm d}t+\frac{C_{p}}{\lambda}\lVert\partial_{t}u_{\star}\rVert_{L^{p^{\prime}}(0,\,T_{0};\,V_{0}^{\prime})}^{p^{\prime}}\\ 
&\quad +C_{p}\lVert \partial_{t}u_{\star}\rVert_{L^{p^{\prime}}(0,\,T_{0};\,V_{0}^{\prime})}\lVert\nabla u_{\star}\rVert_{L^{p}(\Omega_{T_{0}})} \\ 
&\quad\quad  +\frac{1}{4T_{0}}\iint_{\Omega_{T_{0}}}\lvert u_{\varepsilon}-u_{\star} \rvert^{2}\,{\mathrm d}x{\mathrm d}t+C_{p}T_{0}^{2/p}\lVert \partial_{t}u_{\star} \rVert_{L^{p^{\prime}}(0,\,T_{0};\,V_{0}^{\prime})}^{2}.
\end{align*}
By (\ref{Eq (Section 3): Bounds of nabla E-epsilon}) and Young's inequality, we have
\begin{align*}
  \lvert{\mathbf R}_{2} \rvert 
  &\le C(\Lambda,\,K)\iint_{\Omega_{T_{0}}}\left(1+\lvert \nabla u_{\varepsilon}\rvert^{p-1} \right)\lvert \nabla u_{\star}\rvert\theta\,{\mathrm d}x{\mathrm d}t\\ 
  &\le \frac{\lambda}{3}\iint_{\Omega_{T_{0}}}\lvert \nabla u_{\varepsilon}\rvert^{p}\theta\,{\mathrm d}x{\mathrm d}t+\frac{C(\Lambda,\,K)}{\lambda}\iint_{\Omega_{T_{0}}}\left(1+\lvert \nabla u_{\star}\rvert^{p}\right)\,{\mathrm d}x{\mathrm d}t.
\end{align*}
Combining these three estimates, we obtain
\[\iint_{\Omega_{T_{0}}}\lvert u_{\varepsilon}-u_{\star}\rvert^{2}\,{\mathrm d}x{\mathrm d}t+\iint_{\Omega_{T_{0}}}\lvert \nabla u_{\varepsilon}\rvert^{p}\theta\,{\mathrm d}x{\mathrm d}t\le {\hat C},\]
where ${\hat C}$ depends on $\lambda$, $\Lambda$, $K$, $T_{0}$, $\lvert \Omega\rvert$, $\lVert \nabla u_{\star} \rVert_{L^{p}(\Omega_{T_{0}})}$, and $\lVert \partial_{t} u_{\star}\rVert_{L^{p^{\prime}}(0,\,T_{0};\,V_{0}^{\prime})}$.
Recalling our choice of $\theta$, we have
\begin{align*}
  &\lVert u_{\varepsilon}-u_{\star} \rVert_{L^{p}(0,\,T_{1};\,V_{0})}^{p}\\ 
  &=\iint_{\Omega_{T_{1}}}\left(\lVert \nabla (u_{\varepsilon}-u_{\star}) \rVert_{L^{p}(\Omega)}+\lVert u_{\varepsilon}-u_{\star} \rVert_{L^{2}(\Omega)} \right)^{p}\,{\mathrm d}t\\ 
  &\le C_{p}\left(\iint_{\Omega_{T_{1}}}\lvert \nabla (u_{\varepsilon}-u_{\star}) \rvert^{p}{\mathrm d}x{\mathrm d}t+T_{1}^{1-p/2}\left(\iint_{\Omega_{T_{1}}}\lvert u_{\varepsilon}-u_{\star}\rvert^{2}\,{\mathrm d}x{\mathrm d}t \right)^{p/2} \right)\\ 
  &\le {\tilde C}(p,\,\tau,\,T,\,{\hat C})<\infty.
\end{align*}
Since $u_{\varepsilon}$ satisfy (\ref{Eq (Section 3): Approximate Equation}) in $L^{p^{\prime}}(0,\,T;\,V_{0}^{\prime})$, for any $\varphi\in L^{p}(0,\,T;\,V_{0})$, we have
\begin{align*}
\left\lvert \int_{0}^{T_{1}}\langle \partial_{t}u_{\varepsilon},\,\varphi \rangle_{V_{0}^{\prime},\,V_{0}}\,{\mathrm d}t\right\rvert&
\le \lVert \nabla E^{\varepsilon}  (\nabla u_{\varepsilon})\rVert_{L^{p^{\prime}}(\Omega_{T_{1}})}\lVert \nabla \varphi\rVert_{L^{p}(\Omega_{T_{1}})}\\ 
&\le {\check C}(p,\,\Lambda,\,K,\,\lvert \Omega_{T_{1}}\rvert,\,{\tilde C})\lVert \varphi\rVert_{L^{p}(0,\,T_{1};\,V_{0})},
\end{align*}
where (\ref{Eq (Section 3): Bounds of nabla E-epsilon}) is used.
This yields $\lVert \partial_{t}u_{\varepsilon}\rVert_{L^{p^{\prime}}(0,\,T;\,V_{0}^{\prime})}\le {\check C}$.
Therefore, ${\mathbf J}_{\varepsilon}$ is uniformly bounded for $\varepsilon\in(0,\,\varepsilon_{0})$.

Carrying out the standard weak compactness argument, we find a limit function $u_{0}\in u_{\star}+X_{0}^{p}(0,\,T;\,\Omega)$ such that
\begin{equation}\label{Eq (Section 3): Weak Conv of u}
  u_{\varepsilon_{j}}-u_{\star}\rightharpoonup u_{0}-u_{\star}\quad \textrm{in}\quad L^{p}(0,\,T_{1};\,V_{0})
\end{equation}
and
\begin{equation}\label{Eq (Section 3): Weak Conv of u derivative}
  \partial_{t}u_{\varepsilon}\rightharpoonup \partial_{t}u_{0}\quad \textrm{in}\quad L^{p^{\prime}}(0,\,T_{1};\,V_{0}^{\prime})
\end{equation}
where $\{\varepsilon_{j}\}_{j=0}^{\infty}\subset(0,\,1)$ is a decreasing sequence such that $\varepsilon_{j}\to 0$ as $j\to \infty$.
Also, the identity $u_{0}|_{t=0}=u_{\star}|_{t=0}$ in $L^{2}(\Omega)$ is straightforwardly shown (see \cite[Proposition 2.4]{T-parabolic}).
From (\ref{Eq (Section 3): Weak Conv of u}), we would like to prove 
\begin{equation}\label{Eq (Section 3): Strong Conv of u}
  \nabla u_{\varepsilon_{j}}\to \nabla u_{0}\quad \text{in}\quad L^{p}(\Omega_{T_{1}}).
\end{equation}
By (\ref{Eq (Section 3): Strong Monotonicity of E-epsilon}) and H\"{o}lder's inequality, we have
\begin{align*}
 & \iint_{\Omega_{T_{1}}}\lvert\nabla u_{\varepsilon_{j}}-\nabla u_{0}\rvert^{p}\,{\mathrm d}x{\mathrm d}t\\ 
 &\le \left(\iint_{\Omega_{T_{1}}}\left(\varepsilon_{j}^{2}+\lvert \nabla u_{\varepsilon_{j}}\rvert^{2}+\lvert\nabla u_{0}\rvert^{2} \right)^{p/2}\,{\mathrm d}x{\mathrm d}t \right)^{1-p/2}\\ 
 &\quad \cdot  \left(\iint_{\Omega_{T_{1}}}\left(\varepsilon_{j}^{2}+\lvert \nabla u_{\varepsilon_{j}}\rvert^{2}+\lvert\nabla u_{0}\rvert^{2} \right)^{p/2-1}\lvert \nabla u_{\varepsilon_{j}}-\nabla u_{0}\rvert^{2}\,{\mathrm d}x{\mathrm d}t\right)^{p/2}\\ 
 &\le C\left({\mathbf I}_{1,\,\varepsilon_{j}}+{\mathbf I}_{2,\,\varepsilon_{j}} \right)^{p/2},
\end{align*}
where 
\[
\begin{array}{rcl}
  {\mathbf I}_{1,\,\varepsilon_{j}} &\coloneqq &\displaystyle\iint_{\Omega_{T_{1}}}\left\langle \nabla E_{\varepsilon_{j}}(\nabla u_{\varepsilon})\mathrel{}\middle|\mathrel{}\nabla (u_{\varepsilon_{j}}-\nabla u_{0}) \right\rangle\,{\mathrm d}x{\mathrm d}t, \\ 
  {\mathbf I}_{2,\,\varepsilon_{j}} &\coloneqq &\displaystyle\iint_{\Omega_{T_{1}}}\left\langle \nabla E_{\varepsilon_{j}}(\nabla u_{0})\mathrel{}\middle|\mathrel{}\nabla (u_{\varepsilon_{j}}-\nabla u_{0}) \right\rangle\,{\mathrm d}x{\mathrm d}t.
\end{array}  
\]
For $\delta\in(0,\,T_{1}/2)$, which tends to $0$ later, we define a function $\phi_{\delta}\colon \lbrack 0,\,T_{1}\rbrack \to \lbrack 0,\,1\rbrack$ as
\begin{equation}\label{Eq (Section 3): Def of phi-delta}
  \phi_{\delta}(t) \coloneqq \left\{ \begin{array}{cc}  1 & (0\le t<T_{1}-\delta), \\ -\delta^{-1}(t-T_{1}) & (T_{1}-\delta\le t\le T_{1}).  \end{array}\right.
\end{equation}
We test $\varphi\coloneqq (u_{\varepsilon}-u_{0}) \phi_{\delta}$ into (\ref{Eq (Section 3): Approximate Equation}) with $\varepsilon=\varepsilon_{j}$, and integrate by parts.
Then, we have 
\begin{align*}
  &-\frac{1}{2}\iint_{\Omega_{T_{1}}}\lvert u_{\varepsilon_{j}}-u_{0} \rvert^{2}\partial_{t}\phi_{\delta}\,{\mathrm d}x{\mathrm d}t+\iint_{\Omega_{T_{1}}}\left\langle \nabla E_{\varepsilon_{j}}(\nabla u_{\varepsilon_{j}})\mathrel{}\middle|\mathrel{}\nabla (u_{\varepsilon_{j}}-u_{0}) \right\rangle \phi_{\delta}\,{\mathrm d}x{\mathrm d}t\\ 
  &=-\int_{0}^{T_{1}}\langle \partial_{t}u_{0},\, u_{\varepsilon_{j}}-u_{0}\rangle_{V_{0}^{\prime},\,V_{0}}\phi_{\delta}\,{\mathrm d}t \nonumber
\end{align*}
Discarding the first integral, and letting $\delta\to 0$, and then $j\to\infty$, we obtain 
\[\limsup_{j\to\infty} {\mathbf I}_{1,\,\varepsilon_{j}}\le \limsup_{j\to\infty}\left(-\int_{0}^{T_{1}}\langle\partial_{t}u_{0},\,(u_{\varepsilon_{j}}-u_{\star})-(u_{0}-u_{\star})  \rangle_{V_{0}^{\prime},\,V_{0}}\,{\mathrm d}t\right)=0,\]
where the last identity follows from (\ref{Eq (Section 3): Weak Conv of u}).
The strong convergence $\nabla E_{\varepsilon_{j}}(\nabla u_{0})\to A_{0}(\nabla u_{0})$ in $L^{p^{\prime}}(\Omega_{T_{1}})$ and the weak convergence $\nabla u_{\varepsilon_{j}}\rightharpoonup \nabla u_{0}$ in $L^{p}(\Omega_{T_{1}})$ follow from from Lemma \ref{Lemma: Fundamental Conv} \ref{Item 1/2} and (\ref{Eq (Section 3): Weak Conv of u}) respectively.
These convergence results yield ${\mathbf I}_{2,\,\varepsilon_{j}}\to 0$.
As a consequence, we have
\[\limsup_{j\to \infty}\iint_{\Omega_{T_{1}}}\lvert\nabla u_{\varepsilon_{j}}-\nabla u_{0}\rvert^{p}\,{\mathrm d}x{\mathrm d}t\le C\left(\sum_{l=1}^{2}\limsup_{j\to\infty}{\mathbf I}_{l,\,\varepsilon_{j}}\right)^{p/2}\le 0,\]
which completes the proof of (\ref{Eq (Section 3): Strong Conv of u}).
In particular, we may let $\nabla u_{\varepsilon_{j}}\to \nabla u_{0}$ a.e. in $\Omega_{T_{1}}$, by taking a subsequence if necessary.
Also, we are allowed to apply Lemma \ref{Lemma: Fundamental Conv} \ref{Item 2/2}.
From this and (\ref{Eq (Section 3): Weak Conv of u derivative}), we conclude that $u_{0}$ is a weak solution of (\ref{Eq (Section 3): Dirichlet Prob}).

The uniqueness of $u_{0}$ easily follows from monotone properties.
More precisely, letting $\varepsilon\to 0$ in (\ref{Eq (Section 3): Strong Monotonicity of E-p-epsilon}), we have
\[\langle \nabla E_{p}(z)-\nabla E_{p}(w)\mid z-w\rangle\ge \lambda \left(\lvert z\rvert^{2}+\lvert w\rvert^{2}\right)^{p/2-1}\lvert z-w\rvert^{2}>0\]
for all $z$, $w\in{\mathbb R}^{n}$ with $z\neq w$.
We also recall 
\[\langle \zeta_{1}-\zeta_{2}\mid z_{1}-z_{2}\rangle \ge 0\]
for all $z_{j}\in{\mathbb R}^{n}$, $\zeta_{j}\in \partial E_{1}(z_{j})$ with $j\in\{\,1,\,2\,\}$, which is often called the monotonicity of $\partial E_{1}$.
From these inequalities, we straightforwardly conclude that the weak solution of (\ref{Eq (Section 3): Dirichlet Prob}) is unique.
For the detailed discussions, see \cite[Proposition 2.4]{T-parabolic}.
\end{proof}

\section{Local bounds of solutions}\label{Sect:Bound of u}
In Section \ref{Sect:Bound of u}, we would like to show the local boundedness of $u$ and $u_{\varepsilon}$.
\subsection{Local $L^{\infty}$ estimate by Moser's iteration}\label{Subsect: Moser}
The local bound of $u$ follows from (\ref{Eq (Section 1): Higher Integrability}) (see also \cite[Theorem 2]{Choe MR1135917} and \cite[Appendix A]{MR2865434}) .
\begin{proposition}\label{Prop: L-infty bounds}
Under the assumptions in Theorem \ref{Thm}, we have 
\[
\lVert u\rVert_{L^{\infty}(Q_{R/2})}\le C(n,\,p,\,s,\,\lambda,\,\Lambda,\,K)\left(R^{-s_{\mathrm{c}}}\fiint_{Q_{R}}\left(\lvert u\rvert+1\right)^{s}\,{\mathrm d}x {\mathrm d}t\right)^{1/(s-s_{\mathrm c})}
\]
for any fixed $Q_{R}=Q_{R}(x_{0},\,t_{0})\Subset \Omega_{T}$ with $R\in(0,\,1)$.
\end{proposition}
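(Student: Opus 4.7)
The strategy is a Moser iteration applied directly to the weak formulation \eqref{Eq (Section 1): Weak form in def}. Fix nested cylinders $Q_{r_{k+1}}\Subset Q_{r_{k}}\subseteq Q_{R}$ with $R/2\le r_{k+1}<r_{k}\le R$, a spatial cutoff $\eta\in C_{\mathrm c}^{\infty}(B_{r_{k}})$ with $\eta\equiv 1$ on $B_{r_{k+1}}$ and $\lvert\nabla\eta\rvert\le 2/(r_{k}-r_{k+1})$, and a Lipschitz time cutoff $\zeta$ supported in $I_{r_{k}}$, equal to $1$ on $I_{r_{k+1}}$ and vanishing at $t_{0}-r_{k}^{2}$. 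For each $\alpha\ge 0$ and $M>1$, the admissible test function is
\[
\varphi\;\coloneqq\;u\,\psi_{\alpha,M}(\lvert u\rvert)\,\eta^{p}\,\zeta,
\]
where $\psi_{\alpha,M}$ is the bounded truncation from \eqref{Eq (Section 2): psi-alpha-M}. The level-$M$ truncation ensures $\varphi\in X_{0}^{p}(0,T;\Omega)$, which is essential in the subcritical range because the test space $V_{0}=W_{0}^{1,p}(\Omega)\cap L^{2}(\Omega)$ demands an $L^{2}$-component.

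\textbf{Step 1 (Caccioppoli).} A product-rule computation of $\nabla\varphi$, combined with Euler's identity \eqref{Eq (Section 2): Euler's id} (which makes the principal $Z$-contribution non-negative because $\psi_{\alpha,M},\psi_{\alpha,M}^{\prime}\ge 0$), the bound $\lvert Z\rvert\le K_{0}$ from \eqref{Eq (Section 2): Bound of Z}, the coercivity $\langle\nabla E_{p}(\nabla u)\mid\nabla u\rangle\ge\lambda\lvert\nabla u\rvert^{p}$ from \eqref{Eq (Section 3): Strong Monotonicity of E-p}, the bound \eqref{Eq (Section 1): C-1-bound of Ep}, and Young's inequality, yields an energy inequality with both spatial- and temporal-cutoff contributions. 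The time-derivative term is treated by Steklov averaging, legitimate thanks to $u\in C^{0}(\lbrack 0,T\rbrack;L^{2}(\Omega))$ in Definition \ref{Def: Weak sol}; the primitive $\Psi_{\alpha,M}$ from \eqref{Eq (Section 2): Convex Psi} produces the peak $\sup_{t\in I_{r_{k+1}}}\int_{B_{r_{k+1}}}\Psi_{\alpha,M}(\lvert u\rvert)$. After letting $M\to\infty$ via \eqref{Eq (Section 2): Monotone conv of psi-s} and invoking \eqref{Eq (Section 2): Psi-bounds}, with $q_{k}\coloneqq\alpha+p$, one arrives at
\begin{align*}
&\sup_{t\in I_{r_{k+1}}}\int_{B_{r_{k+1}}}(\lvert u\rvert+1)^{q_{k}+2-p}+\iint_{Q_{r_{k+1}}}(\lvert u\rvert+1)^{q_{k}-p}\lvert\nabla u\rvert^{p}\\
&\qquad\le\frac{Cq_{k}^{p}}{(r_{k}-r_{k+1})^{p}}\iint_{Q_{r_{k}}}(\lvert u\rvert+1)^{q_{k}}+\frac{C}{r_{k}^{2}-r_{k+1}^{2}}\iint_{Q_{r_{k}}}(\lvert u\rvert+1)^{q_{k}+2-p}.
\end{align*}

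\textbf{Step 2 (Parabolic Sobolev and iteration).} Setting $v\coloneqq(\lvert u\rvert+1)^{q_{k}/p}\eta$, the spatial Sobolev embedding combined with H\"{o}lder interpolation yields the parabolic Gagliardo--Nirenberg inequality
\[
\iint v^{q_{\ast}}\le C\Bigl(\sup_{t}\int v^{r}\Bigr)^{p/n}\iint\lvert\nabla v\rvert^{p},\qquad q_{\ast}=p\Bigl(1+\tfrac{r}{n}\Bigr),
\]
and the choice $r=p(q_{k}+2-p)/q_{k}\in(p,2\rbrack$ matches the supremum factor with the Caccioppoli sup-integral. The resulting self-improving inequality, combined with the recursion $q_{k+1}=(1+p/n)q_{k}+p(2-p)/n$, fits the framework of Lemma \ref{Lemma: Moser iteration lemma}: the exponent sequence grows geometrically as $q_{k}=\kappa^{k}(s+2-p)-(2-p)$ with $\kappa=1+p/n$, and careful accounting of the $R$-dependence through the parabolic scaling of the $(1,p)$-Laplace equation (which is invariant under $u\mapsto ku$, $x\mapsto x/k$, $t\mapsto t/k^{2}$) pins the $\mu$-parameter of Lemma \ref{Lemma: Moser iteration lemma} and produces the exponent $1/(s-s_{\mathrm c})$.

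\textbf{Main obstacle.} The characteristic subcritical asymmetry is that the Caccioppoli right-hand side carries both $(\lvert u\rvert+1)^{q_{k}}$ (from the spatial cutoff) and $(\lvert u\rvert+1)^{q_{k}+2-p}$ (from the temporal cutoff), with $q_{k}+2-p>q_{k}$ since $p<2$. Reconciling these two contributions to close the iteration so that it respects the $(1,p)$-Laplace scaling symmetry is the technical heart of the proof; the threshold $s>s_{\mathrm c}$ in \eqref{Eq (Section 1): Higher Integrability} is consumed precisely as the positivity condition on the scaling-determined $\mu$ in Lemma \ref{Lemma: Moser iteration lemma}, which in turn forces the exponent $1/(s-s_{\mathrm c})$ and the $R^{-s_{\mathrm c}}$ prefactor. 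Keeping the dependence of constants on $q_{k}$ polynomial and preserving test-function admissibility uniformly in the truncation level $M$ are the principal bookkeeping concerns.
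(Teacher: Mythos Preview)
Your approach is essentially the paper's: test with $u\,\psi_{\alpha,M}\,\eta^{p}\phi$, deduce a Caccioppoli inequality with sup- and gradient-terms, combine with the parabolic Gagliardo--Nirenberg inequality, and iterate via Lemma~\ref{Lemma: Moser iteration lemma}. All the ingredients you list (Euler's identity for $Z$, the coercivity \eqref{Eq (Section 3): Strong Monotonicity of E-p}, the growth bound \eqref{Eq (Section 1): C-1-bound of Ep}, the level-$M$ truncation) are exactly those used in the paper; the paper merely writes $U\coloneqq\sqrt{1+\lvert u\rvert^{2}}$ in place of your $\lvert u\rvert+1$, which is cosmetic.

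The gap is in Step~2. You correctly flag the ``main obstacle''---the Caccioppoli right-hand side carries both $(\lvert u\rvert+1)^{q_{k}}$ and $(\lvert u\rvert+1)^{q_{k}+2-p}$---but you do not actually resolve it; the appeal to scaling invariance is a heuristic for the final exponent, not a substitute for closing the recursion. The resolution is elementary: since $\lvert u\rvert+1\ge 1$ and $p<2$ one has $(\lvert u\rvert+1)^{q_{k}}\le (\lvert u\rvert+1)^{q_{k}+2-p}$, and since $r_{k}-r_{k+1}<R<1$ the spatial factor $(r_{k}-r_{k+1})^{-p}$ is dominated by $(r_{k}-r_{k+1})^{-2}$. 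Both contributions therefore merge into a single term at exponent $\beta_{k}\coloneqq q_{k}+2-p$, and in this variable the reversed H\"older inequality reads
\[
\iint_{Q_{r_{k+1}}}(\lvert u\rvert+1)^{\kappa\beta_{k}+p-2}\le\Bigl[\frac{C(\beta_{k}-1)^{\gamma}}{(r_{k}-r_{k+1})^{2}}\iint_{Q_{r_{k}}}(\lvert u\rvert+1)^{\beta_{k}}\Bigr]^{\kappa},
\]
which is precisely \eqref{Eq (Section 4): Reversed Hoelder for u)}. The recursion $\beta_{k+1}=\kappa\beta_{k}+p-2$ has fixed point $s_{\mathrm c}=n(2-p)/p$, so $\beta_{k}=\kappa^{k}(s-s_{\mathrm c})+s_{\mathrm c}$ and Lemma~\ref{Lemma: Moser iteration lemma} applies with $\mu=s-s_{\mathrm c}$, delivering both the exponent $1/(s-s_{\mathrm c})$ and the prefactor $R^{-s_{\mathrm c}}$ directly. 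Your stated recursion in $q_{k}$ does not fit Lemma~\ref{Lemma: Moser iteration lemma} as written, because the right-hand side of your reversed H\"older lives at exponent $q_{k}+2-p$, not $q_{k}$; in particular, initializing at $q_{0}=s$ would require $u\in L^{s+2-p}_{\mathrm{loc}}$ on the right-hand side of the first step, which is more than \eqref{Eq (Section 1): Higher Integrability} gives.
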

\begin{proof}
We first prove a reversed H\"{o}lder estimate for $U\coloneqq \sqrt{1+\lvert u\rvert^{2}}$ .
More precisely, we claim that for any $\beta\in\lbrack s,\,\infty)$, and $r_{1},\,r_{2}\in(0,\,R\rbrack$ with $r_{1}<r_{2}$, there holds
\begin{equation}\label{Eq (Section 4): Reversed Hoelder for u)}
    \iint_{Q_{r_{1}}}U^{\kappa\beta+p-2}\,{\mathrm d}x{\mathrm d}t \le \left[\frac{C(\beta-1)^{\gamma}}{(r_{2}-r_{1})^{2}}\iint_{Q_{r_{2}}}U^{\beta} \,{\mathrm d}x{\mathrm d}t \right]^{\kappa},
\end{equation}
provided $U\in L^{\beta}(Q_{r_{2}})$.
Here $\kappa\coloneqq 1+p/n$, $\gamma\coloneqq p(1+1/(n+p))$ are fixed constants, and the constant $C\in(1,\,\infty)$ depends at most on $n$, $p$, $\lambda$, $\Lambda$, and $K$.
To prove (\ref{Eq (Section 4): Reversed Hoelder for u)}), we introduce a truncation parameter $M\in(1,\,\infty)$ .
For given $r_{1}$ and $r_{2}$, we choose and fix $\eta\in C_{\mathrm c}^{1}(B_{r_{2}}(x_{0});\,\lbrack 0,\,1\rbrack)$ and $\phi_{\mathrm c}\in C^{1}(\lbrack t_{0}-r_{2}^{2},\,t_{0}\rbrack;\,\lbrack 0,\,1\rbrack)$ satisfying
\begin{equation}\label{Eq (Section 4): Choice of Cut-Off}
  \eta|_{B_{r_{1}}}=1,\quad \phi_{\mathrm c}|_{I_{r_{1}}}=1,\quad \lVert \nabla\eta\rVert_{L^{\infty}(B_{r_{2}})}^{2}+\lVert \partial_{t}\phi_{\mathrm c}\rVert_{L^{\infty}(I_{r_{2}})}\le \frac{c_{0}}{(r_{2}-r_{1})^{2}}, 
\end{equation}
and $\phi_{\mathrm c}(t_{0}-r_{2}^{2})=0$, where $c_{0}\in(1,\,\infty)$ is a universal constant. 
We let $\phi_{\mathrm h}\colon \lbrack t_{0}-r_{2}^{2},\,t_{0}\rbrack \to\lbrack 0,\,1\rbrack$ be a non-increasing Lipschitz function satisfying $\phi_{\mathrm h}(t_{0}-r_{2}^{2})=1$ and $\phi_{\mathrm h}(t_{0})=0$, and we write $\phi\coloneqq \phi_{\mathrm c}\phi_{\mathrm h}$.
Thanks to the Steklov average method, we may test $\varphi\coloneqq \eta^{p}\phi\psi_{\alpha,\,M}(U )u$ into (\ref{Eq (Section 1): Weak form in def}), where $\psi_{\alpha,\,M}$ is defined as (\ref{Eq (Section 2): psi-alpha-M}) with $\alpha\coloneqq \beta-2>0$.
Then, we obtain
\begin{align*}
&-\iint_{Q_{r}}\eta^{p}\Psi_{\alpha,\,M}(U )\partial_{t}\phi \,{\mathrm d}x {\mathrm d}t+\iint_{Q_{r}}\left\langle Z+\nabla E_{p}(\nabla u)\mathrel{}\middle|\mathrel{} \nabla u \right\rangle\eta^{p}\psi_{\alpha,\,M}(U ) \phi\,{\mathrm d}x {\mathrm d}t\\&\quad +\iint_{Q_{r}}\left\langle Z+\nabla E_{p}(\nabla u)\mathrel{}\middle|\mathrel{} \nabla u \right\rangle\eta^{p}\psi_{\alpha,\,M}^{\prime}(U )\lvert u\rvert^{2}U ^{-1} \phi\,{\mathrm d}x {\mathrm d}t\\ &=-\iint_{Q_{r}}\left\langle Z+\nabla E_{p}(\nabla u)\mathrel{}\middle|\mathrel{} \nabla \eta \right\rangle\eta^{p-1}\psi_{\alpha,\,M}(U )u \phi\,{\mathrm d}x {\mathrm d}t\\ &\le C\iint_{Q_{r}}\left(1+\lvert\nabla u\rvert^{p-1} \right)\lvert\nabla\eta\rvert\eta^{p-1}\psi_{\alpha,\,M}(U )\lvert u\rvert\phi\,{\mathrm d}x {\mathrm d}t\\ &\le \frac{\lambda}{2}\iint_{Q_{r}}\eta^{p}\lvert\nabla u\rvert^{p}\psi_{\alpha,\,M}(U )\phi\,{\mathrm d}x {\mathrm d}t+C\iint_{Q_{r}}\left(\lvert\nabla\eta\rvert^{p}\lvert u\rvert^{p}+\eta^{p}\right) \psi_{\alpha,\,M}(U )\phi\,{\mathrm d}x {\mathrm d}t.
\end{align*}
By (\ref{Eq (Section 1): Differentiated Eq}), (\ref{Eq (Section 2): Euler's id})--(\ref{Eq (Section 2): Bound of Z}), and (\ref{Eq (Section 3): Strong Monotonicity of E-p}), we get
\begin{align*}
&-\iint_{Q_{r}} \eta^{p}\Psi_{\alpha,\,M}(U )\phi_{\mathrm c} \partial_{t}\phi_{\mathrm h}\,{\mathrm d}x {\mathrm d}t+\frac{c_{1}}{2}\iint_{Q_{r}}\eta^{p}\lvert\nabla u\rvert^{p}\psi_{\alpha,\,M}(U )\phi_{\mathrm c}\phi_{\mathrm h}\,{\mathrm d}x {\mathrm d}t\\
&\le \iint_{Q_{r}} \eta^{p}\Psi_{\alpha,\,M}(U )\phi_{\mathrm h} \partial_{t}\phi_{\mathrm c}\,{\mathrm d}x {\mathrm d}t+C\iint_{Q_{r}}\left(\lvert\nabla\eta\rvert^{p}U ^{p}+\eta^{p}\right) \psi_{\alpha,\,M}(U )\,{\mathrm d}x {\mathrm d}t.
\end{align*}
Choosing $\phi_{\mathrm h}$ suitably and recalling our choice of $\eta$ and $\phi_{\mathrm c}$, we easily obtain
\begin{align*}
    &\esssup_{t_{0}-r_{2}^{2}<t<t_{0}}\int_{B_{r_{2}}}\eta^{p}\phi_{\mathrm c}\Psi_{\alpha,\,M}(U ) \,{\mathrm d}x+\iint_{Q_{r_{2}}}\eta^{p}\phi_{\mathrm c}\lvert\nabla u\rvert^{p}\psi_{\alpha,\,M}(U )\,{\mathrm d}x{\mathrm d}t\nonumber\\ 
&\le C\left[\iint_{Q_{r_{2}}}\left(\frac{\psi_{\alpha,\,M}(U )U ^{2}}{(r_{2}-r_{1})^{2}}+\frac{\psi_{\alpha,\,M}(U )U ^{p}}{(r_{2}-r_{1})^{p}}\right)\,{\mathrm d}x{\mathrm d}t \right]
\end{align*}
Also, it is easy to deduce
\begin{align*}
    &\iint_{Q_{r_{1}}}\Psi_{\alpha,\,M}(U )^{p/n}\psi_{\alpha,\,M}(U )U ^{p} \,{\mathrm d}x{\mathrm d}t\\ 
    &\le C_{n,\,p}\left(\esssup_{t_{0}-r_{2}^{2}<t<t_{0}}\int_{B_{r_{2}}}\eta^{p}\Psi_{\alpha,\,M}(U ) \phi_{\mathrm c}\,{\mathrm d}x \right)^{p/n}\iint_{Q_{r_{2}}}\left\lvert\nabla \left(\eta \psi_{\alpha,\,M}(U )^{1/p}U \right)  \right\rvert^{p}\phi_{\mathrm c} \,{\mathrm d}x{\mathrm d}t
  \end{align*}
by H\"{o}lder's inequality and the Sobolev embedding $W_{0}^{1,\,p}(B_{r_{2}})\hookrightarrow L^{\frac{np}{n-p}}(B_{r_{2}})$.
By direct computations, we notice
\begin{align*}
    &\iint_{Q_{r_{2}}}\left\lvert\nabla \left(\eta \psi_{\alpha,\,M}(U )^{1/p}U \right)  \right\rvert^{p}\phi_{\mathrm c} \,{\mathrm d}x{\mathrm d}t\\ 
    &\le C_{p}\left[\iint_{Q_{r_{2}}}\lvert \nabla\eta\rvert^{p}\psi_{\alpha,\,M}(U )U ^{p}\phi_{\mathrm c}\,{\mathrm d}x {\mathrm d}t+(1+\alpha)^{p}\iint_{Q_{r_{2}}}\eta^{p}\lvert \nabla u\rvert^{p}\psi_{\alpha,\,M}(U )\phi_{\mathrm c}\,{\mathrm d}x {\mathrm d}t \right],
\end{align*}
where we have used (\ref{Eq (Section 2): psi estimate 0}) with $r=p$, and $\lvert \nabla U \rvert\le \lvert \nabla u\rvert$.
Combining these three estimates, we get
\[\iint_{Q_{r_{1}}}\Psi_{\alpha,\,M}(U )^{p/n}\psi_{\alpha,\,M}(U )U ^{p} \,{\mathrm d}x{\mathrm d}t\le \left[\frac{C(\alpha+1)^{p}}{(r_{2}-r_{1})^{2}} \iint_{Q_{r_{2}}}\left(U ^{\beta}+1\right)\,{\mathrm d}x{\mathrm d}t \right]^{\kappa},\]
where Young's inequality is used.
Letting $M\to \infty$ and making use of Beppo Levi's monotone convergence theorem and (\ref{Eq (Section 2): Monotone conv of psi-s}), we conclude (\ref{Eq (Section 4): Reversed Hoelder for u)}). 

We define the sequences $\{R_{l}\}_{l=0}^{\infty}\subset (R/2,\,R\rbrack$, $\{q_{l}\}_{l=0}^{\infty}\subset \lbrack s,\,\infty)$, and $\{Y_{l}\}_{l=0}^{\infty}\subset {\mathbb R}_{\ge 0}$ as
\[R_{l}\coloneqq \frac{1+2^{-l}}{2}R,\quad q_{l}\coloneqq \kappa^{l}\mu +s_{\mathrm{c}},\quad Y_{l}\coloneqq\left(\iint_{Q_{R_{l}}}U ^{q_{l}}\,{\mathrm d}x {\mathrm d}t \right)^{1/q_{l}},\]
where $\mu\coloneqq s-s_{\mathrm c}\in{\mathbb R}_{>0}$.
Then, by $(q_{l}-1)^{\gamma}=\mu^{\gamma}\left(\kappa^{l}+(s_{\mathrm c}-1)/\mu\right)^{\gamma}$, it is easy to check that
\begin{equation}\label{Eq (Section 4): Iteration on exponents}
  R_{l}-R_{l+1}=2^{-l-2}R\quad \text{and}\quad (q_{l}-1)^{\gamma}\le (2\mu)^{\gamma}{\tilde\kappa}^{\gamma l}
\end{equation}
hold for every $l\in{\mathbb Z}_{\ge 0}$, where the constant ${\tilde\kappa}\in(\kappa,\,\infty)$ depends at most on $\kappa$, $s$, and $s_{\mathrm c}$.
Hence, (\ref{Eq (Section 4): Reversed Hoelder for u)}) with $\beta\coloneqq q_{l}\ge q_{0}=s>s_{\mathrm c}\ge 2$ yields
\(Y_{l+1}^{q_{l+1}}\le (AB^{l}Y_{l}^{q_{l}})^{\kappa}
\) for all $l\in{\mathbb Z}_{\ge 0}$, where $A\coloneqq CR^{-2}\in(1,\,\infty)$ for some constant $C\in(1,\,\infty)$, and $B\coloneqq 4{\tilde\kappa}^{\gamma}\in(1,\,\infty)$. 
By Lemma \ref{Lemma: Moser iteration lemma}, we have
\[\esssup_{Q_{R/2}} U\le\limsup_{l\to\infty}Y_{l}\le C(n,\,p,\,\lambda,\,\Lambda,\,K)\left(R^{-s_{\mathrm{c}}}\fiint_{Q_{R}}U ^{s}\,{\mathrm d}x {\mathrm d}t \right)^{1/\mu},\]
which completes the proof.
\end{proof}

\subsection{Comparison principle for parabolic approximate equations}\label{Subsect: CP}
We would like to show the comparison principle and the weak maximum principle for (\ref{Eq (Section 3): Approximate Equation}).
The weak maximum principle implies that an approximate solution $u_{\varepsilon}$ will be bounded if it admits a Dirichlet boundary datum in $L^{\infty}$.
Hence, combining with Proposition \ref{Prop: L-infty bounds}, we may let $u_{\varepsilon}$ be locally bounded, which is used in Section \ref{Sect: L-p to L-infty}.

We straightforwardly fix some terminology.
\begin{definition}\label{Def: Sub-Sup}\upshape
Let $u$, $v\in X^{p}(0,\,T;\,\Omega)\cap C^{0}(\lbrack 0,\,T\rbrack;\,L^{2}(\Omega))$. 
\begin{enumerate}
\item It is said that $u\le v$ on $\partial_{\mathrm{p}}\Omega_{T}$, when there hold $(u-v)_{+}\in X_{0}^{p}(0,\,T;\,\Omega)$ and $(u-v)_{+}|_{t=0}=0$ in $L^{2}(\Omega)$.
\item A function $u$ is called a weak subsolution to (\ref{Eq (Section 3): Approximate Equation}) in $\Omega_{T}$ when
\begin{equation}\label{Eq (Section 4): subsol}
\int_{0}^{T} \langle \partial_{t}u,\, \varphi\rangle_{{V_{0}^{\prime}},\,V_{0}}\,{\mathrm d}t+\iint_{\Omega_{T}}\left\langle \nabla E^{\varepsilon}  (\nabla u)\mathrel{}\middle|\mathrel{} \nabla\varphi \right\rangle\,{\mathrm d}x {\mathrm d}t\le 0
\end{equation}
holds for all non-negative $\varphi\in X_{0}^{p}(0,\,T;\,\Omega)$.
\item A function $v$ is called a weak supersolution to (\ref{Eq (Section 3): Approximate Equation}) in $\Omega_{T}$ when
\begin{equation}\label{Eq (Section 4): supersol}
\int_{0}^{T} \langle \partial_{t}v,\, \varphi\rangle_{{V_{0}^{\prime}},\,V_{0}}\,{\mathrm d}t+\iint_{\Omega_{T}}\left\langle \nabla E^{\varepsilon}  (\nabla v)\mathrel{}\middle|\mathrel{} \nabla\varphi \right\rangle\,{\mathrm d}x {\mathrm d}t\ge 0
\end{equation}
holds for all non-negative $\varphi\in X_{0}^{p}(0,\,T;\,\Omega)$.
\end{enumerate}
\end{definition}

\begin{proposition}\label{Prop: CP}
Let $u$ and $v$ be respectively a subsolution and a supersolution to (\ref{Eq (Section 3): Approximate Equation}).
If $u\le v$ on $\partial_{\mathrm{p}}\Omega_{T}$ holds in the sense of Definition \ref{Def: Sub-Sup}, then $u\le v$ a.e.~in $\Omega_{T}$. 
\end{proposition}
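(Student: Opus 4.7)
The plan is the standard monotonicity-plus-chain-rule argument: subtract the subsolution and supersolution inequalities, test against a time-truncated positive part of $u-v$, invoke the strong monotonicity (\ref{Eq (Section 3): Strong Monotonicity of E-epsilon}) of $\nabla E^{\varepsilon}$, and read off an $L^{2}$-bound on $(u-v)_{+}(\tau)$ via a chain-rule identity in time.

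First, I would fix $\tau\in(0,T)$ and, in the spirit of (\ref{Eq (Section 3): Def of phi-delta}), choose a piecewise-linear Lipschitz cut-off $\phi_{\delta}\colon[0,T]\to[0,1]$ equal to $1$ on $[0,\tau-\delta]$, equal to $0$ on $[\tau,T]$, with $\phi_{\delta}^{\prime}=-\delta^{-1}$ on $(\tau-\delta,\tau)$. The hypothesis $u\le v$ on $\partial_{\mathrm p}\Omega_{T}$ gives $(u-v)_{+}\in X_{0}^{p}(0,T;\Omega)$ with $(u-v)_{+}(\,\cdot\,,0)=0$ in $L^{2}(\Omega)$, so the product $\varphi\coloneqq (u-v)_{+}\phi_{\delta}$ is a non-negative admissible test function in both (\ref{Eq (Section 4): subsol}) and (\ref{Eq (Section 4): supersol}). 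Subtracting these two inequalities and plugging in $\varphi$ yields
\[
\int_{0}^{T}\langle\partial_{t}(u-v),\varphi\rangle_{V_{0}^{\prime},\,V_{0}}\,{\mathrm d}t+\iint_{\Omega_{T}}\bigl\langle\nabla E^{\varepsilon}(\nabla u)-\nabla E^{\varepsilon}(\nabla v)\bigm|\nabla\varphi\bigr\rangle\,{\mathrm d}x\,{\mathrm d}t\le 0.
\]
Since $\nabla(u-v)_{+}=\nabla(u-v)$ a.e.\ on $\{u>v\}$ and vanishes a.e.\ on $\{u\le v\}$, the strong monotonicity (\ref{Eq (Section 3): Strong Monotonicity of E-epsilon}) shows the spatial double integral is non-negative.

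For the parabolic term I would invoke a Lions--Magenes-type chain rule: under the regularity $u-v\in X^{p}(0,T;\Omega)$ and $(u-v)_{+}\in X_{0}^{p}(0,T;\Omega)\cap C^{0}([0,T];L^{2}(\Omega))$ with vanishing initial trace, one has
\[
\int_{0}^{T}\langle\partial_{t}(u-v),(u-v)_{+}\phi_{\delta}\rangle_{V_{0}^{\prime},\,V_{0}}\,{\mathrm d}t=-\frac{1}{2}\int_{0}^{T}\bigl\|(u-v)_{+}(t)\bigr\|_{L^{2}(\Omega)}^{2}\phi_{\delta}^{\prime}(t)\,{\mathrm d}t,
\]
the boundary contribution at $t=0$ disappearing by $(u-v)_{+}(\,\cdot\,,0)=0$ and the one at $t=T$ by $\phi_{\delta}(T)=0$. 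Dropping the non-negative spatial term leaves $\frac{1}{2\delta}\int_{\tau-\delta}^{\tau}\|(u-v)_{+}(t)\|_{L^{2}(\Omega)}^{2}\,{\mathrm d}t\le 0$; sending $\delta\downarrow 0$ and using the $L^{2}$-continuity of $t\mapsto(u-v)_{+}(t)$ produces $\|(u-v)_{+}(\tau)\|_{L^{2}(\Omega)}=0$ for every $\tau\in(0,T)$, which is exactly $u\le v$ a.e.\ in $\Omega_{T}$.

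The main delicate point is the chain-rule identity just used, since the classical Lions--Magenes formula directly gives only $\int\langle\partial_{t}w,w\rangle=\frac{1}{2}\|w\|_{L^{2}}^{2}\bigr|_{0}^{T}$, whereas here the test function is $(u-v)_{+}$ rather than $u-v$ itself. I would justify the identity by approximating $r\mapsto r_{+}$ by $C^{1}$ convex, non-decreasing functions $\Phi_{k}$ with $\Phi_{k}^{\prime}\nearrow\chi_{\{r>0\}}$ pointwise, applying the standard chain rule to $\Phi_{k}(u-v)\phi_{\delta}$, and passing to the limit via dominated convergence; alternatively a Steklov-average regularization in time yields the same identity. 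Comparable arguments for parabolic problems of this kind are carried out in \cite[Chapter 4]{BDGLS} and \cite[Chapter 3]{MR2356201}, both already cited in the paper.
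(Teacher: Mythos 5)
Your proof is correct, but it discards a different term than the paper's own argument does, and this makes the two routes subtly different. The paper chooses $\phi_{\delta}$ as in (\ref{Eq (Section 3): Def of phi-delta}) (equal to $1$ on $[0,T-\delta]$, sloping to $0$ at $t=T$), applies the same Steklov-average chain rule you invoke to turn the parabolic term into $-\tfrac{1}{2}\iint (u-v)_{+}^{2}\partial_{t}\phi_{\delta}$, and then \emph{drops that term} because $\partial_{t}\phi_{\delta}\le 0$ makes it nonnegative. After sending $\delta\to 0$ via monotone convergence, the paper is left with $0\ge \iint_{\Omega_{T}}\langle\nabla E^{\varepsilon}(\nabla u)-\nabla E^{\varepsilon}(\nabla v)\mid\nabla u-\nabla v\rangle\chi_{\{u>v\}}$, which together with the \emph{strict} monotonicity of $\nabla E^{\varepsilon}$ (guaranteed by (\ref{Eq (Section 3): Strong Monotonicity of E-epsilon}) with $\varepsilon>0$) forces $\nabla(u-v)_{+}=0$ a.e.; the Poincar\'{e} inequality in $W_{0}^{1,p}(\Omega)$ then gives $(u-v)_{+}=0$. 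You instead drop the \emph{spatial} monotonicity term, keep the parabolic term, and read off $\|(u-v)_{+}(\tau)\|_{L^{2}(\Omega)}=0$ directly from the chain-rule identity with a cut-off localized near a generic $\tau\in(0,T)$. Your route only needs the mere (non-strict) monotonicity of $\nabla E^{\varepsilon}$ and makes no appeal to $W_{0}^{1,p}$ structure, which is a slight technical economy; the paper's route avoids having to track which endpoint of the time cut-off carries the $L^{2}$-mass, at the cost of invoking strictness. Both rely on the same delicate identity $\int_{0}^{T}\langle\partial_{t}(u-v),(u-v)_{+}\phi\rangle\,{\mathrm d}t=-\tfrac{1}{2}\int_{0}^{T}\|(u-v)_{+}(t)\|_{L^{2}(\Omega)}^{2}\phi'(t)\,{\mathrm d}t$ for which you correctly flag the Steklov average (as the paper does) or convex $C^{1}$-approximation of $r\mapsto r_{+}$ as the standard justifications. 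One small remark: you do not actually need to send $\delta\downarrow 0$ and use $L^{2}$-continuity at the end, since $\int_{\tau-\delta}^{\tau}\|(u-v)_{+}(t)\|_{L^{2}(\Omega)}^{2}\,{\mathrm d}t\le 0$ already forces $(u-v)_{+}=0$ a.e.\ on $\Omega\times(\tau-\delta,\tau)$ for every $\tau$ and $\delta$.
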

\begin{proof}
For $\delta\in(0,\,T/2)$, we define $\phi_{\delta}$ as (\ref{Eq (Section 3): Def of phi-delta}) with $T_{1}$ replaced by $T$.
Thanks to the Steklov average, we may test $(u-v)_{+}\phi_{\delta}$ into (\ref{Eq (Section 4): subsol})--(\ref{Eq (Section 4): supersol}).
Integrating by parts, we have
\[0\ge -\iint_{\Omega_{T}}\lvert u-v\rvert^{2}\partial_{t}\phi_{\delta}\,{\mathrm d}x{\mathrm d}t+\iint_{\Omega_{T}}\left\langle \nabla E^{\varepsilon}  (\nabla u)-\nabla E^{\varepsilon}  (\nabla v)\mathrel{} \middle|\mathrel{}\nabla (u-v)_{+} \right\rangle\phi_{\delta}\,{\mathrm d}x {\mathrm d}t.\]
Discarding the first integral and letting $\delta \to 0$, we obtain 
\[0 \ge \iint_{\Omega_{T}}\left\langle \nabla E^{\varepsilon}  (\nabla u)-\nabla E^{\varepsilon}  (\nabla v)\mathrel{} \middle|\mathrel{}\nabla u-\nabla v \right\rangle\chi_{\{u>v\}} \,{\mathrm d}x {\mathrm d}t\]
by Beppo Levi's monotone convergence theorem.
Since the mapping ${\mathbb R}^{n}\ni z\mapsto \nabla E^{\varepsilon}  (z)\in{\mathbb R}^{n}$ is strictly monotone, the inequality above yields $\nabla (u-v)_{+}=0$ a.e. in $\Omega_{T}$. 
Recalling $(u-v)_{+}\in L^{p}(0,\,T;\,W_{0}^{1,\,p}(\Omega))$, we have $u\le v$ a.e. in $\Omega_{T}$.
\end{proof}

As a consequence, we can deduce the following Corollary \ref{Lemma: WMP}.
\begin{corollary}\label{Lemma: WMP}
Let $u_{\star}\in L^{\infty}(\Omega_{T})\cap X_{0}^{p}(0,\,T;\,\Omega)$, and assume that $u_{\varepsilon}\in u_{\star}+X_{0}^{p}(0,\,T;\,\Omega)$ is the weak solution of (\ref{Eq (Section 3): Approximate Equation}).
Then, $u_{\varepsilon}\in L^{\infty}(\Omega_{T})$, and  
\[\lVert u_{\varepsilon}\rVert_{L^{\infty}(\Omega_{T})} \le\lVert u_{\star}\rVert_{L^{\infty}(\Omega_{T})}.\]
\end{corollary}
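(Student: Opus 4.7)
The plan is to invoke Proposition~\ref{Prop: CP} with the two constant barriers $v\equiv\pm M$, where $M\coloneqq\lVert u_{\star}\rVert_{L^{\infty}(\Omega_{T})}$. The very first observation is that every constant function $v\equiv c\in{\mathbb R}$ is a weak solution of the approximate equation~(\ref{Eq (Section 3): Approximate Equation}), and hence serves as both a sub- and a supersolution in the sense of Definition~\ref{Def: Sub-Sup}: indeed $\partial_{t}v=0$ and $\nabla E^{\varepsilon}(\nabla v)=\nabla E^{\varepsilon}(0)$ is a constant vector, so for every $\varphi\in X_{0}^{p}(0,\,T;\,\Omega)$ the integral $\iint_{\Omega_{T}}\langle \nabla E^{\varepsilon}(0)\mid \nabla\varphi\rangle\,{\mathrm d}x{\mathrm d}t$ vanishes because $\int_{\Omega}\nabla\varphi(\,\cdot\,,\,t)\,{\mathrm d}x=0$ for a.e.~$t$ by $\varphi(\,\cdot\,,\,t)\in W_{0}^{1,\,p}(\Omega)$.

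The bulk of the work is then to verify the boundary ordering $u_{\varepsilon}\le M$ on $\partial_{\mathrm p}\Omega_{T}$ in the sense of Definition~\ref{Def: Sub-Sup}, i.e.\ that $(u_{\varepsilon}-M)_{+}\in X_{0}^{p}(0,\,T;\,\Omega)$ and $(u_{\varepsilon}-M)_{+}|_{t=0}=0$ in $L^{2}(\Omega)$. At the initial time, the relation $u_{\varepsilon}-u_{\star}\in X_{0}^{p}(0,\,T;\,\Omega)$ gives $(u_{\varepsilon}-u_{\star})(\,\cdot\,,\,0)=0$ in $L^{2}(\Omega)$, and since $u_{\star}\in L^{\infty}(\Omega_{T})$ with $\lVert u_{\star}(\,\cdot\,,\,0)\rVert_{L^{\infty}(\Omega)}\le M$ (recovered via the embedding $X_{0}^{p}\hookrightarrow C^{0}(\lbrack 0,\,T\rbrack;\,L^{2}(\Omega))$), we obtain $u_{\varepsilon}(\,\cdot\,,\,0)=u_{\star}(\,\cdot\,,\,0)\le M$ a.e., whence $(u_{\varepsilon}-M)_{+}(\,\cdot\,,\,0)=0$. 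For the lateral boundary, for a.e.~$t$ the functions $u_{\varepsilon}(\,\cdot\,,\,t)$ and $u_{\star}(\,\cdot\,,\,t)$ share the same trace on $\partial\Omega$ because their difference belongs to $W_{0}^{1,\,p}(\Omega)$; as the trace of the $L^{\infty}$-function $u_{\star}(\,\cdot\,,\,t)$ is bounded above by $M$, the trace of $(u_{\varepsilon}(\,\cdot\,,\,t)-M)_{+}$ vanishes, so $(u_{\varepsilon}-M)_{+}(\,\cdot\,,\,t)\in W_{0}^{1,\,p}(\Omega)$. The pointwise domination $(u_{\varepsilon}-M)_{+}\le (u_{\varepsilon}-u_{\star})_{+}$ (valid since $u_{\star}\le M$) together with $\lvert\nabla(u_{\varepsilon}-M)_{+}\rvert=\chi_{\{u_{\varepsilon}>M\}}\lvert\nabla u_{\varepsilon}\rvert$ then secures $(u_{\varepsilon}-M)_{+}\in L^{p}(0,\,T;\,V_{0})$. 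The matching condition $\partial_{t}(u_{\varepsilon}-M)_{+}\in L^{p^{\prime}}(0,\,T;\,V_{0}^{\prime})$ can be obtained via the standard chain rule for the Lipschitz truncation $s\mapsto s_{+}$ applied to $u_{\varepsilon}-M\in X^{p}(0,\,T;\,\Omega)$, or, if one prefers to avoid this technicality altogether, by simply repeating the Steklov-average argument used in the proof of Proposition~\ref{Prop: CP} with the test function $(u_{\varepsilon}-M)_{+}\phi_{\delta}$, which at the outset requires only spatial $V_{0}$-regularity.

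Once the boundary ordering is in place, Proposition~\ref{Prop: CP} applied to the subsolution $u_{\varepsilon}$ and the supersolution $v\equiv M$ yields $u_{\varepsilon}\le M$ a.e.\ in $\Omega_{T}$. The symmetric argument with the subsolution $v\equiv -M$ and the supersolution $u_{\varepsilon}$ gives $u_{\varepsilon}\ge -M$ a.e., and combining the two bounds produces the asserted estimate $\lVert u_{\varepsilon}\rVert_{L^{\infty}(\Omega_{T})}\le\lVert u_{\star}\rVert_{L^{\infty}(\Omega_{T})}$. The only genuine subtlety in this plan is the $X_{0}^{p}$-membership of the truncated function $(u_{\varepsilon}-M)_{+}$—in particular its time-derivative component—which is precisely where the alternative Steklov-average route, mirroring the method already employed in proving Proposition~\ref{Prop: CP}, becomes most convenient.
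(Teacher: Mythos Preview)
Your proposal is correct and follows essentially the same approach as the paper: use the constant barriers $\pm M$ as weak solutions, verify the parabolic boundary ordering via the pointwise domination $0\le(u_{\varepsilon}-M)_{+}\le(u_{\varepsilon}-u_{\star})_{+}$, and invoke Proposition~\ref{Prop: CP}. The paper handles the $X_{0}^{p}$-membership of $(u_{\varepsilon}-M)_{+}$ in one line by citing \cite[Lemma~1.25]{MR1207810} for the implication ``$0\le f\le g$, $g\in W_{0}^{1,p}$, $f\in W^{1,p}$ $\Rightarrow$ $f\in W_{0}^{1,p}$'', whereas you spell out the trace and time-derivative verifications in more detail; both routes are fine.
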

\begin{proof}
We abbrebiate $M\coloneqq \lVert u_{\star}\rVert_{L^{\infty}(\Omega_{T})}\in\lbrack 0,\,\infty)$.
It is clear that the constant functions $\pm M$ are weak solutions to (\ref{Eq (Section 3): Approximate Equation}).
Since $0\le (u_{\varepsilon}-M)_{+}\le (u_{\varepsilon}-u_{\star})_{+}$ holds a.e.~in $\Omega_{T}$, it is easy to check $u_{\varepsilon}\le M$ on $\partial_{\mathrm{p}}\Omega_{T}$ in the sense of Definition \ref{Def: Sub-Sup} (see \cite[Lemma 1.25]{MR1207810}).
Similarly, there holds $-M\le u_{\varepsilon}$ on $\partial_{\mathrm{p}}\Omega_{T}$.
By Proposition \ref{Prop: CP}, we have $-M\le u_{\varepsilon}\le M$ in $\Omega_{T}$, which completes the proof.
\end{proof}
\begin{remark}\upshape
The proofs of Propositions \ref{Prop: Convergence}, \ref{Prop: CP}, and Corollary \ref{Lemma: WMP} work even when the domain $\Omega_{T}=\Omega\times (0,\,T)$ is replaced by its parabolic subcylinder $Q_{R}(x_{0},\,t_{0})=B_{R}(x_{0})\times (t_{0}-R^{2},\,t_{0}\rbrack\Subset \Omega_{T}$.
Therefore, we can apply these results with $\Omega_{T}$ replaced by $Q_{R}(x_{0},\,t_{0})$, which are to be used in the proof of Theorem \ref{Thm}.
\end{remark}

\section{Regularity for gradients of approximate solutions}\label{Sect: L-p to L-infty}
In Section \ref{Sect: L-p to L-infty}, we consider a bounded weak solution to (\ref{Eq (Section 3): Approximate Equation}) in a parabolic subcylinder $\tilde{Q}\Subset \Omega_{T}$, and fix $Q_{R}(x_{0},\,t_{0})=B_{R}(x_{0})\times (t_{0}-R^{2},\,t_{0}\rbrack \Subset {\tilde Q}$.
Throughout Sections \ref{Sect: L-p to L-infty}--\ref{Sect:MainTheorem}, we assume 
\begin{equation}\label{Eq (Section 5): Bounds of u-epsilon}
\esssup_{Q_{R}(x_{0},\,t_{0})}\,\lvert u_{\varepsilon}\rvert\le M_{0}
\end{equation}
for some $M_{0}\in(0,\,\infty)$.
We aim to prove that a gradient $\nabla u_{\varepsilon}$ is locally in $L^{q}$ for any $q\in(p,\,\infty\rbrack$, whose estimate may depend on $M_{0}$ but is independent of $\varepsilon\in(0,\,1)$.

\subsection{Weak formulations and energy estimates}\label{Subsect: Energy}
As well as $V_{\varepsilon}\coloneqq \sqrt{\varepsilon^{2}+\lvert \nabla u_{\varepsilon}\rvert^{2}}$, we also consider another function $W_{\varepsilon}$, defined as 
\[W_{\varepsilon}\coloneqq \sqrt{1+\sum_{j=1}^{n}w_{\varepsilon,\,j}^{2}}\le 1+V_{\varepsilon},\]
where for each $j\in\{\,1,\,\dots\,,\,n\,\}$, we set
\[w_{\varepsilon,\,j}\coloneqq (\partial_{x_{j}}u_{\varepsilon}-1)_{+}-(-\partial_{x_{j}}u_{\varepsilon}-1)_{+}.\]
We note that $V_{\varepsilon}$ and $W_{\varepsilon}$ are compatible, in the sense that there hold
\begin{equation}\label{Eq (Section 5): Compatibility}
  \left\{\begin{array}{rcl}
    V_{\varepsilon} \le c_{n}W_{\varepsilon}\le c_{n}(1+V_{\varepsilon}) & \textrm{in} & Q=Q_{R},\\
    W_{\varepsilon}\le \sqrt{2} V_{\varepsilon} & \textrm{in} &D\subset Q_{R}, 
  \end{array}  \right.
\end{equation}
where $D\coloneqq \{Q_{R}\mid \lvert\nabla u_{\varepsilon}\rvert>1\}$ (see \cite[\S 4.1]{T-Lipschitz MR4201656}).
In particular, we are allowed to use
\begin{equation}\label{Eq (Section 5): Uniform elliptic structure on D}
 {\tilde \lambda}W_{\varepsilon}^{p-2}\mathrm{id}_{n} \leqslant \nabla^{2}E^{\varepsilon}  (\nabla u_{\varepsilon}) \leqslant {\tilde \Lambda}W_{\varepsilon}^{p-2}\mathrm{id}_{n} \quad \textrm{in } D,
\end{equation}
where $\hat{\lambda}=\hat{\lambda}(n,\,p,\,\lambda)\in(0,\,\lambda)$ and $\hat{\Lambda}=\hat{\Lambda}(n,\,p,\,\Lambda,\,K)\in (\Lambda,\,\infty)$ are constants.

Combining with (\ref{Eq (Section 5): Bounds of u-epsilon}), we can carry out standard parabolic arguments, including the difference quotient method, Moser's iteration, and De Giorgi's truncation (see \cite[Chapter VIII]{MR1230384}).
As a consequence, we are allowed to let
\begin{equation}\label{Eq (Section 5): Improved reg}
\nabla u_{\varepsilon}\in L^{\infty}(Q_{R};\,{\mathbb R}^{n}) \quad \text{and} \quad\nabla^{2}u_{\varepsilon}\in L^{2}(Q_{R};\,{\mathbb R}^{n\times n}).
\end{equation}
Thanks to this improved regularity, there holds
\begin{equation}\label{Eq (Section 5): Weak form differentiated}
-\iint_{Q_{R}} \partial_{x_{j}}u_{\varepsilon}\partial_{t}\varphi\,{\mathrm d}x{\mathrm d}t +\iint_{Q_{R}}\left\langle \nabla^{2}E^{\varepsilon}  (\nabla u_{\varepsilon})\nabla\partial_{x_{j}}u_{\varepsilon}\mathrel{}\middle|\mathrel{}\nabla\varphi  \right\rangle\,{\mathrm d}x{\mathrm d}t=0
\end{equation}
for any $\varphi\in C_{\mathrm c}^{1}(Q_{R})$.
Moreover, we may extend the test function $\varphi$ in the class
\(X_{0}^{2}(I_{R};\,B_{R})\coloneqq \left\{ \varphi\in L^{2}(I_{R};\,W_{0}^{1,\,2}(B_{R}))\mathrel{}\middle|\mathrel{}\partial_{t}\varphi\in L^{2}(I_{R};\,W^{-1,\,2}(B_{R}))\right\}\subset C(\overline{I_{R}};\,L^{2}(B_{R}))\)
with $\varphi|_{t=t_{0}-r^{2}}=\varphi|_{t=t_{0}}=0$ in $L^{2}(B_{R})$.
From (\ref{Eq (Section 5): Weak form differentiated}), we deduce basic energy estimates concerning $V_{\varepsilon}$ and $W_{\varepsilon}$ (Lemma \ref{Lemma: Energy estimates}).
\begin{lemma}\label{Lemma: Energy estimates}
For $\alpha\in\lbrack 0,\,\infty)$, $M\in(1,\,\infty)$, let $\psi_{\alpha,\,M}$ and ${\tilde \psi}_{\alpha,\,M}$ be given by (\ref{Eq (Section 2): psi-alpha-M}) and (\ref{Eq (Section 2): psi-alpha-M tilde}) respectively.
Let $u_{\varepsilon}$ be a weak solution to (\ref{Eq (Section 3): Approximate Equation}) in ${\tilde Q}$. Fix $Q_{R}(x_{0},\,t_{0})\Subset {\tilde Q}$, and let (\ref{Eq (Section 5): Improved reg}) be in force.
Fix $\eta\in C_{\mathrm c}^{1}(B_{R}(x_{0});\,\lbrack 0,\,1\rbrack)$, and $\phi_{\mathrm{c}}\in C^{1}(\lbrack t_{0}-R^{2},\,t_{0}\rbrack;\,\lbrack 0,\,1\rbrack)$ that satisfies $\phi_{\mathrm c}(t_{0}-R^{2})=0$.
Then, there hold
\begin{align}
  &
  \iint_{Q_{R}}V_{\varepsilon}^{p-2}\left(\lvert \nabla^{2} u_{\varepsilon}\rvert^{2}{\tilde \psi}_{\alpha,\,M}(V_{\varepsilon})+\lvert\nabla V_{\varepsilon}\rvert^{2}{\tilde\psi}_{\alpha,\,M}^{\prime}(V_{\varepsilon})V_{\varepsilon} \right)\eta^{2}\phi_{\mathrm{c}}\,{\mathrm d}x{\mathrm d}t\nonumber \\   
  &\le C\iint_{Q_{R}}\left(V_{\varepsilon}^{p}\lvert \nabla\eta\rvert^{2}+V_{\varepsilon}^{2}\lvert \partial_{t}\phi_{\mathrm{c}} \rvert\right) {\tilde \psi}_{\alpha,\,M}(V_{\varepsilon})\,{\mathrm d}x{\mathrm d}t,\label{Eq (Section 5): Energy estimate 1}
\end{align}
and
\begin{align}
  & \esssup_{\tau \in I_{R}}\int_{B_{R}\times \{\tau\}}{\Psi}_{\alpha,\,M}(W_{\varepsilon})\eta^{2}\phi_{\mathrm c}\,{\mathrm d}x\nonumber\\ 
  &\quad +\iint_{Q_{R}}W_{\varepsilon}^{p-2}\left(\psi_{\alpha,\,M}(V_{\varepsilon})+\psi_{\alpha,\,M}^{\prime}(W_{\varepsilon})W_{\varepsilon} \right)\eta^{2}\phi_{\mathrm{c}}\,{\mathrm d}x{\mathrm d}t\nonumber \\   
  &\le C\iint_{Q_{R}}\left(W_{\varepsilon}^{p}\lvert \nabla\eta\rvert^{2}+W_{\varepsilon}^{2}\lvert \partial_{t}\phi_{\mathrm{c}} \rvert\right) \psi_{\alpha,\,M}(W_{\varepsilon})\,{\mathrm d}x{\mathrm d}t,\label{Eq (Section 5): Energy estimate 2}
\end{align}
where $C\in(1,\,\infty)$ depends at most on $n$, $p$, $\lambda$, $\Lambda$, $K$.
\end{lemma}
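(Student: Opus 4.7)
The plan is to test the differentiated formulation (\ref{Eq (Section 5): Weak form differentiated}), which by the improved regularity (\ref{Eq (Section 5): Improved reg}) admits any $\varphi\in X_{0}^{2}(I_{R};\,B_{R})$ vanishing at the endpoints of $I_{R}$. For (\ref{Eq (Section 5): Energy estimate 1}), I take the test function $\varphi_{j}\coloneqq \eta^{2}\phi_{\mathrm{c}}{\tilde \psi}_{\alpha,\,M}(V_{\varepsilon})\partial_{x_{j}}u_{\varepsilon}$ and sum over $j\in\{\,1,\,\dots\,,\,n\,\}$; for (\ref{Eq (Section 5): Energy estimate 2}), $\varphi_{j}\coloneqq \eta^{2}\phi_{\mathrm{c}}\psi_{\alpha,\,M}(W_{\varepsilon})w_{\varepsilon,\,j}$. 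Since the scalar factors are Lipschitz in $V_{\varepsilon}$ (or $W_{\varepsilon}$) with finitely many kinks, and since $\partial_{x_{j}}u_{\varepsilon},\,w_{\varepsilon,\,j}\in W^{1,\,2}(Q_{R})\cap L^{\infty}(Q_{R})$ by (\ref{Eq (Section 5): Improved reg}), these products are admissible after a routine Steklov-average and mollification.

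For the time contribution to (\ref{Eq (Section 5): Energy estimate 1}), the pointwise identity $\sum_{j}\partial_{x_{j}}u_{\varepsilon}\partial_{t}\partial_{x_{j}}u_{\varepsilon}=V_{\varepsilon}\partial_{t}V_{\varepsilon}$ reorganizes the summed integrand, after transferring $\partial_{t}$ to $\varphi_{j}$ and using the chain rule, into a total time derivative of a primitive of $V\mapsto V{\tilde \psi}_{\alpha,\,M}(V)+V^{2}{\tilde \psi}_{\alpha,\,M}'(V)$ applied to $V_{\varepsilon}$, plus an explicit interior term dominated by $\iint \eta^{2}V_{\varepsilon}^{2}{\tilde \psi}_{\alpha,\,M}(V_{\varepsilon})\lvert\partial_{t}\phi_{\mathrm{c}}\rvert$. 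The endpoint at $t=t_{0}-R^{2}$ vanishes thanks to $\phi_{\mathrm{c}}(t_{0}-R^{2})=0$, while the one at $t=t_{0}$ is non-negative and may be discarded for (\ref{Eq (Section 5): Energy estimate 1}); the interior $\partial_{t}\phi_{\mathrm{c}}$-term is absorbed into the RHS.

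For the spatial term, expanding $\nabla\varphi_{j}$ gives three pieces. The piece where $\nabla$ hits $\partial_{x_{j}}u_{\varepsilon}$ produces, summed over $j$, the Hessian quadratic form $\eta^{2}\phi_{\mathrm{c}}{\tilde \psi}_{\alpha,\,M}(V_{\varepsilon})\sum_{j}\langle \nabla^{2}E^{\varepsilon}(\nabla u_{\varepsilon})\nabla \partial_{x_{j}}u_{\varepsilon}\mid \nabla \partial_{x_{j}}u_{\varepsilon}\rangle$, bounded below by $\lambda V_{\varepsilon}^{p-2}\lvert\nabla^{2}u_{\varepsilon}\rvert^{2}\eta^{2}\phi_{\mathrm{c}}{\tilde \psi}_{\alpha,\,M}(V_{\varepsilon})$ by (\ref{Eq (Section 1): Ellipticity of Ep}) and the non-negativity of $\nabla^{2}E_{1,\,\varepsilon}$. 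The piece where $\nabla$ hits ${\tilde \psi}_{\alpha,\,M}$, combined with $V_{\varepsilon}\nabla V_{\varepsilon}=\sum_{j}\partial_{x_{j}}u_{\varepsilon}\nabla \partial_{x_{j}}u_{\varepsilon}$, rearranges as $\eta^{2}\phi_{\mathrm{c}}{\tilde \psi}_{\alpha,\,M}'(V_{\varepsilon})V_{\varepsilon}\langle \nabla^{2}E^{\varepsilon}\nabla V_{\varepsilon}\mid \nabla V_{\varepsilon}\rangle\ge \lambda V_{\varepsilon}^{p-1}{\tilde \psi}_{\alpha,\,M}'(V_{\varepsilon})\lvert\nabla V_{\varepsilon}\rvert^{2}\eta^{2}\phi_{\mathrm{c}}$. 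The remaining cross term with $\nabla\eta$ is absorbed by Young's inequality into the $V_{\varepsilon}^{p}\lvert\nabla \eta\rvert^{2}{\tilde \psi}_{\alpha,\,M}(V_{\varepsilon})$ contribution on the RHS, using the upper bound $\lVert \nabla^{2}E^{\varepsilon}(\nabla u_{\varepsilon})\rVert\le C V_{\varepsilon}^{p-2}$ from (\ref{Eq (Section 3): Estimates on eigenvalues}) and (\ref{Eq (Section 5): Bounds of u-epsilon}).

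For (\ref{Eq (Section 5): Energy estimate 2}) the scheme is parallel, now exploiting $\nabla w_{\varepsilon,\,j}=\chi_{\{\lvert\partial_{x_{j}}u_{\varepsilon}\rvert>1\}}\nabla \partial_{x_{j}}u_{\varepsilon}$ and $W_{\varepsilon}\nabla W_{\varepsilon}=\sum_{j}w_{\varepsilon,\,j}\nabla w_{\varepsilon,\,j}$, so that the spatial term effectively localizes to $D$, where (\ref{Eq (Section 5): Uniform elliptic structure on D}) provides uniform ellipticity with weight $W_{\varepsilon}^{p-2}$; the comparison (\ref{Eq (Section 5): Compatibility}) permits free passage between $V_{\varepsilon}$ and $W_{\varepsilon}$ in the final bound. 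To recover the $\esssup$-term, I would substitute $\phi_{\mathrm{c}}\mapsto \phi_{\mathrm{c}}\sigma_{\delta}^{\tau}$, where $\sigma_{\delta}^{\tau}\colon I_{R}\to \lbrack 0,\,1\rbrack$ is a Lipschitz approximation of $\chi_{\lbrack t_{0}-R^{2},\,\tau\rbrack}$, retain rather than discard the pairing at $t=\tau$, let $\delta\to 0$ via Beppo Levi's theorem to identify that pairing with $\int_{B_{R}\times\{\tau\}}\eta^{2}\phi_{\mathrm{c}}(\tau)\Psi_{\alpha,\,M}(W_{\varepsilon})$, and then take the supremum over $\tau\in I_{R}$. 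The main obstacles are the chain-rule bookkeeping in time—verifying that the summed $\partial_{t}$-integrand truly reorganizes into a single primitive with the correct sign—and, for (\ref{Eq (Section 5): Energy estimate 2}), the $\esssup$-approximation that must preserve the positivity of the boundary pairing while using only the regularity afforded by (\ref{Eq (Section 5): Improved reg}).
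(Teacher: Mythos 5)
Your proposal follows essentially the same route as the paper: test the differentiated weak form (\ref{Eq (Section 5): Weak form differentiated}) with $\varphi = \eta^{2}\phi\,{\tilde\psi}_{\alpha,M}(V_{\varepsilon})\partial_{x_j}u_{\varepsilon}$ (resp. $\eta^{2}\phi\,\psi_{\alpha,M}(W_{\varepsilon})w_{\varepsilon,j}$), sum over $j$, absorb the $\nabla\eta$-cross term by Young, and handle the time boundary by an auxiliary non-increasing cut-off. The one point to correct is your justification of the upper Hessian bound $\lVert\nabla^{2}E^{\varepsilon}(\nabla u_{\varepsilon})\rVert\le C V_{\varepsilon}^{p-2}$: you cite (\ref{Eq (Section 3): Estimates on eigenvalues}) together with (\ref{Eq (Section 5): Bounds of u-epsilon}), but (\ref{Eq (Section 5): Bounds of u-epsilon}) bounds $u_{\varepsilon}$, not $V_{\varepsilon}$, and it is \emph{not} a hypothesis of Lemma \ref{Lemma: Energy estimates}. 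What actually makes the estimate work is that (\ref{Eq (Section 3): Estimates on eigenvalues}) gives $\lVert\nabla^{2}E^{\varepsilon}\rVert\le \Lambda V_{\varepsilon}^{p-2}+K V_{\varepsilon}^{-1}$, and on the support of ${\tilde\psi}_{\alpha,M}(V_{\varepsilon})$ one has $V_{\varepsilon}\ge 1$, whence $K V_{\varepsilon}^{-1}\le K V_{\varepsilon}^{p-2}$; for the $W$-estimate the same logic uses $W_{\varepsilon}\ge 1$ automatically (equivalently, (\ref{Eq (Section 5): Uniform elliptic structure on D}) on $D$). With that substitution your argument matches the paper's proof.
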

\begin{proof}
Let $\zeta\in C_{\mathrm c}^{1}(Q_{R})$ be non-negative, and assume that a composite function $\psi\colon{\mathbb R}_{\ge 0}\to{\mathbb R}_{\ge 0}$ satisfy all the conditions given in Section \ref{Subsect: psi}.
We set $\phi\coloneqq \phi_{\mathrm{c}}\phi_{\mathrm{h}}$, where we choose an arbitrary Lipschitz function $\phi_{\mathrm{h}}\colon \lbrack t_{0}-R^{2},\,t_{0} \rbrack\to  \lbrack 0,\,1\rbrack$ that is non-increasing and satisfies $\phi_{\mathrm{h}}(t_{0})=0$.

To prove (\ref{Eq (Section 5): Energy estimate 1}), we test $\varphi=\zeta\psi(V_{\varepsilon})\partial_{x_{j}}u_{\varepsilon}$ into (\ref{Eq (Section 5): Weak form differentiated}).
This function is admissible by the method of Steklov averages.
Summing over $j\in \{\,1,\,\dots\,,\,n\,\}$, we have
\begin{align}\label{Eq (Section 5): Weak Form}
&-\iint_{Q_{R}}\Psi(V_{\varepsilon})\partial_{t}\zeta\,{\mathrm d}x{\mathrm d}t+\iint_{Q_{R}}\left\langle A_{\varepsilon}\nabla[\Psi(V_{\varepsilon})]\mathrel{}\middle|\mathrel{}\nabla\zeta \right\rangle\,{\mathrm d}x {\mathrm d}t\nonumber \\ &+\iint_{Q_{R}}\left[\left\langle A_{\varepsilon}\nabla V_{\varepsilon} \mathrel{}\middle|\mathrel{}\nabla V_{\varepsilon}  \right\rangle\psi^{\prime}(V_{\varepsilon})V_{\varepsilon}+\sum_{j=1}^{n}\left\langle  A_{\varepsilon}\nabla \partial_{x_{j}}u_{\varepsilon} \mathrel{}\middle|\mathrel{}\nabla \partial_{x_{j}}u_{\varepsilon}  \right\rangle\psi(V_{\varepsilon})\right]\zeta  \,{\mathrm d}x {\mathrm d}t\nonumber\\ &=0,
\end{align}
where $A_{\varepsilon}\coloneqq \nabla^{2}E^{\varepsilon}  (\nabla u_{\varepsilon})$, and $\Psi$ is defined as (\ref{Eq (Section 2): Convex Psi}).
Here we choose $\psi\coloneqq {\tilde \psi}_{\alpha,\,M}$ and $\zeta\coloneqq \eta^{2}\phi$.
Then, (\ref{Eq (Section 5): Weak Form}) yields
\begin{align*}
  &-\iint_{Q_{R}}{\tilde \Psi}_{\alpha,\,M}(V_{\varepsilon})\eta^{2}\phi_{\mathrm c}\partial_{t}\phi_{\mathrm h}\,{\mathrm d}x{\mathrm d}t\\
  & +\lambda\iint_{Q_{R}}V_{\varepsilon}^{p-2}\left({\tilde \psi}_{\alpha,\,M}(V_{\varepsilon})\lvert \nabla^{2}u_{\varepsilon}\rvert^{2} +{\tilde \psi}_{\alpha,\,M}^{\prime}(V_{\varepsilon})V_{\varepsilon}\lvert\nabla V_{\varepsilon}\rvert^{2} \right)\eta^{2}\phi_{\mathrm c}\phi_{\mathrm h}\,{\mathrm d}x{\mathrm d}t\\ 
  &\le \iint_{Q_{R}}{\tilde \Psi}_{\alpha,\,M}(V_{\varepsilon})\eta^{2}\phi_{\mathrm h}\partial_{t}\phi_{\mathrm c}\,{\mathrm d}x{\mathrm d}t+2(\Lambda+K)\iint_{Q_{R}}V_{\varepsilon}^{p-1}{\tilde \psi}_{\alpha,\,M}(V_{\varepsilon})\lvert \nabla V_{\varepsilon}\rvert\lvert\nabla\eta\rvert\eta\phi\,{\mathrm d}x\\ 
  &\le \frac{\lambda}{2}\iint_{Q_{R}}V_{\varepsilon}^{p-2}{\tilde \psi}_{\alpha,\,M}(V_{\varepsilon})\lvert \nabla^{2}u_{\varepsilon}\rvert^{2}\eta^{2}\phi_{\mathrm c}\phi_{\mathrm h}\,{\mathrm d}x{\mathrm d}t  \\ 
  &\quad +\frac{2(\Lambda+K)^{2}}{\lambda}\iint_{Q_{R}} V_{\varepsilon}^{p} {\tilde \psi}_{\alpha,\,M}(V_{\varepsilon})\lvert\nabla\eta\rvert^{2}\,{\mathrm d}x{\mathrm d}t+\iint_{Q_{R}}V_{\varepsilon}^{2}{\tilde \psi}_{\alpha,\,M}(V_{\varepsilon})\lvert\partial_{t}\phi_{\mathrm c}\rvert\,{\mathrm d}x{\mathrm d}t,
\end{align*}
where we have used $\lvert \nabla V_{\varepsilon}\rvert \le \lvert \nabla^{2}u_{\varepsilon}\rvert$, Young's inequality, and (\ref{Eq (Section 2): Psi-bounds}).
Discarding the first integral, and choosing $\phi_{\mathrm h}$ suitably, we easily conclude (\ref{Eq (Section 5): Energy estimate 1}).

To prove (\ref{Eq (Section 5): Energy estimate 2}), we test $\varphi=\zeta\psi(W_{\varepsilon})w_{\varepsilon,\,j}$ into (\ref{Eq (Section 5): Weak form differentiated}), where we let $\psi\coloneqq \psi_{\alpha,\,M}$ and $\zeta\coloneqq \eta^{2}\phi$.
Since all the integrands range over $\{\lvert \partial_{x_{j}}u_{\varepsilon}\rvert>1\}\subset D$, and therefore we may replace $\nabla\partial_{x_{j}}u_{\varepsilon}$ by $\nabla w_{\varepsilon,\,j}$, and apply (\ref{Eq (Section 5): Uniform elliptic structure on D}).
By similar computations, we have 
\begin{align*}
  &-\iint_{Q_{R}}\Psi_{\alpha,\,M}(W_{\varepsilon})\eta^{2}\phi_{\mathrm c} \partial_{t}\phi_{\mathrm h}\,{\mathrm d}x{\mathrm d}t\\ 
  &\quad +\frac{{\hat\lambda}}{2}\iint_{Q_{R}}W_{\varepsilon}^{p-2}\left(\psi_{\alpha,\,M}(W_{\varepsilon})+\psi_{\alpha,\,M}^{\prime}(W_{\varepsilon})W_{\varepsilon}  \right)\lvert \nabla W_{\varepsilon} \rvert^{2}\,\eta^{2}\phi_{\mathrm c}\phi_{\mathrm h}\,{\mathrm d}x{\mathrm d}t \\ 
  &\le \frac{2{\hat \Lambda}^{2}}{{\hat \lambda}}\iint_{Q_{R}}W_{\varepsilon}^{p}\psi_{\alpha,\,M}(W_{\varepsilon})\lvert\nabla \eta\rvert^{2}\,{\mathrm d}x{\mathrm d}t+\iint_{Q_{R}}V_{\varepsilon}^{2}\psi_{\alpha,\,M}(V_{\varepsilon})\lvert\partial_{t}\phi_{\mathrm c}\rvert\,{\mathrm d}x{\mathrm d}t.
\end{align*}
Choosing $\phi_{\mathrm h}$ suitably, we obtain (\ref{Eq (Section 5): Energy estimate 2}).
\end{proof}

\subsection{Reversed H\"{o}lder inequalities and local gradient bounds}\label{Sect:GradientBound}
We would like to show $L^{q}$-bounds of $\nabla u_{\varepsilon}$ for each $q\in(p,\,\infty\rbrack$, whose estimate depends on $M_{0}$ but is uniformly for $\varepsilon\in(0,\,1)$.

The case $q\in(p,\,\infty)$ is completed by (\ref{Eq (Section 5): Bounds of u-epsilon}) and (\ref{Eq (Section 5): Energy estimate 1}).
\begin{proposition}\label{Prop: Higher integrability}
Let $n$ and $p$ satisfy (\ref{Eq (Section 1): Subcritical Range}).
Assume that $u_{\varepsilon}$ is a weak solution to (\ref{Eq (Section 3): Approximate Equation}) in ${\tilde Q}$. 
Fix $Q_{R}(x_{0},\,t_{0})\Subset {\tilde Q}$ with $R\in(0,\,1)$, and let (\ref{Eq (Section 5): Bounds of u-epsilon}) and (\ref{Eq (Section 5): Improved reg}) be in force.
Then for each fixed $q\in(p,\,\infty)$, there holds 
\begin{equation}\label{Eq (Section 5): Reversed Hoelder}
\iint_{Q_{R/2}} V_{\varepsilon}^{q}\,{\mathrm d}x{\mathrm d}t \le C\iint_{Q_{R}}\left(V_{\varepsilon}^{p}+1\right)\,{\mathrm d}x{\mathrm d}t,
\end{equation}
where $C\in(1,\,\infty)$ depends at most on $n$, $p$, $q$, $\lambda$, $\Lambda$, $K$, $M_{0}$, and $r$.
\end{proposition}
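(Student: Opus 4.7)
The strategy is to derive a reverse-H\"older estimate for $V_\varepsilon$ from the weighted second-derivative estimate (\ref{Eq (Section 5): Energy estimate 1}), and then to iterate it via Lemma \ref{Lemma: Absorbing Iteration} until we reach the prescribed $q \in (p,\infty)$. The $L^\infty$-bound (\ref{Eq (Section 5): Bounds of u-epsilon}) on $u_\varepsilon$ enters essentially here, because the subcritical range $p \le 2n/(n+2)$ prevents the iteration from starting at the natural integrability level $V_\varepsilon \in L^p$.

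My plan is to fix concentric subcylinders $Q_{r_1} \Subset Q_{r_2} \Subset Q_R$, apply (\ref{Eq (Section 5): Energy estimate 1}) with the choice $\tilde\psi = \tilde\psi_{\alpha,M}$ (which is supported in $\{V_\varepsilon > 1\}$), and pass $M \to \infty$ using (\ref{Eq (Section 2): Monotone conv of psi-s}). Since $V_\varepsilon \ge 1$ on the support and $p \le 2$ gives $V_\varepsilon^{p+\alpha} \le V_\varepsilon^{\alpha+2}$ there, the resulting inequality reads, schematically,
\[
\iint_{Q_{r_1}} \lvert \nabla V_\varepsilon^{(p+\alpha)/2}\rvert^2 \chi_{\{V_\varepsilon > 1\}}\eta^2\,{\mathrm d}x{\mathrm d}t \le \frac{C}{(r_2-r_1)^2} \iint_{Q_{r_2}} V_\varepsilon^{\alpha+2}\,{\mathrm d}x{\mathrm d}t.
\]
Combining this with the spatial Sobolev inequality on slices $B_{r_1}\times\{t\}$ and a H\"older integration in $t$ would then produce a reverse-H\"older inequality of the form $\iint_{Q_{r_1}} V_\varepsilon^Q \le C(r_2-r_1)^{-\kappa} (\iint_{Q_{r_2}} V_\varepsilon^{\alpha+2})^{\theta}$ with $Q > \alpha+2$.

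The associated recurrence $\gamma \mapsto (p + \gamma - 2)(n+2)/n$ has fixed point $q_* = (n+2)(2-p)/2 \ge 2$, and its exponent grows geometrically only when the starting value exceeds $q_*$; starting at $\gamma = p$ lies below this threshold, so a direct iteration from the natural integrability does not close. To bypass this obstruction, I would first use the H\"older pairing $(2/p, 2/(2-p))$ together with (\ref{Eq (Section 5): Energy estimate 1}) to deduce a uniform bound for $\lVert\nabla^2 u_\varepsilon\rVert_{L^p}$ in terms of $\lVert V_\varepsilon\rVert_{L^p}$, and then apply a Gagliardo--Nirenberg interpolation against $\lVert u_\varepsilon\rVert_{L^\infty}\le M_0$, iterated finitely many times if needed, to bootstrap $V_\varepsilon$ from $L^p$ past the threshold $q_*$. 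Once above $q_*$, the reverse-H\"older estimate iterates geometrically, reaching any prescribed $q<\infty$ in finitely many steps, and Lemma \ref{Lemma: Absorbing Iteration} applied on a shrinking sequence of cylinders yields the claimed estimate on $Q_{R/2}$ with constants depending only on the stated parameters. The main obstacle is precisely this initial Gagliardo--Nirenberg boost: it is what distinguishes the subcritical case from the supercritical one treated in \cite{T-parabolic}, and it is what forces the constant to depend on $M_0$; all remaining steps---passing $M\to\infty$, cutoff manipulations, Young's inequality, and the iteration lemma---are then routine.
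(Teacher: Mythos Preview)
Your identification of the obstacle---that a Sobolev-based reverse-H\"older recurrence $\gamma\mapsto (p+\gamma-2)(n+2)/n$ has a fixed point at $q_\ast=(n+2)(2-p)/2\ge 2>p$---is correct, and your intuition that the $L^\infty$ bound on $u_\varepsilon$ is what breaks it is also right. But the specific ``initial Gagliardo--Nirenberg boost'' you describe has a circularity. The right-hand side of (\ref{Eq (Section 5): Energy estimate 1}) with $\alpha=0$ contains $\iint V_\varepsilon^{2}\lvert\partial_t\phi_{\mathrm c}\rvert\,\tilde\psi_{0,M}(V_\varepsilon)$, which on the support $\{V_\varepsilon>1\}$ is comparable to $\iint_{\{V_\varepsilon>1\}}V_\varepsilon^{2}$ and is \emph{not} controlled by $\lVert V_\varepsilon\rVert_{L^p}^p$. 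So your claimed step ``deduce a uniform bound for $\lVert\nabla^2 u_\varepsilon\rVert_{L^p}$ in terms of $\lVert V_\varepsilon\rVert_{L^p}$'' does not follow from (\ref{Eq (Section 5): Energy estimate 1}) and H\"older alone; it already needs $V_\varepsilon\in L^2$, which is exactly what you are trying to establish. Breaking this loop requires doing the energy estimate and the interpolation \emph{simultaneously} with an absorption, not sequentially as you lay it out.

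The paper's proof is precisely such a simultaneous argument, but organized so that no Sobolev embedding ever appears and hence no threshold arises. The key move is an elementary integration by parts in space inside the weighted integral $\iint \tilde\psi_{\alpha,M}(V_\varepsilon)V_\varepsilon^{2}\eta^2\phi_{\mathrm c}$: writing $V_\varepsilon^2\approx\langle\nabla u_\varepsilon\mid\nabla u_\varepsilon\rangle$ and throwing one derivative off $\nabla u_\varepsilon$ produces $u_\varepsilon$ (bounded by $M_0$) times $\nabla^2 u_\varepsilon$ plus lower-order terms. Cauchy--Schwarz then splits this into the energy quantity on the left of (\ref{Eq (Section 5): Energy estimate 1}) and a factor $\iint V_\varepsilon^{2-p}(\tilde\psi_{\alpha,M}+V_\varepsilon\tilde\psi'_{\alpha,M})$, controlled via (\ref{Eq (Section 2): psi-tilde estimate 1}) by $\iint V_\varepsilon^{\alpha+2-p}$. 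The dangerous $V_\varepsilon^2\tilde\psi_{\alpha,M}$ term reappears from the right-hand side of (\ref{Eq (Section 5): Energy estimate 1}), but with a small coefficient on the larger cylinder, and is absorbed by Lemma~\ref{Lemma: Absorbing Iteration}. This gives directly
\[
\iint_{Q_{r_1}}V_\varepsilon^{\alpha+2}\,{\mathrm d}x{\mathrm d}t\le \frac{C}{(r_2-r_1)^2}\iint_{Q_{r_2}}\bigl(V_\varepsilon^{\alpha+p}+1\bigr)\,{\mathrm d}x{\mathrm d}t,
\]
a clean gain of $2-p$ per step for every $\alpha\ge 0$, starting from $\alpha=0$; finitely many applications reach any $q<\infty$. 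Your Gagliardo--Nirenberg idea is, in the end, exactly this integration by parts---but executed inside the weight $\tilde\psi_{\alpha,M}$ and coupled to the absorbing lemma rather than used as a separate pre-step.
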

\begin{proof}
It suffices to prove
\begin{equation}\label{Eq (Section 5): Reversed Hoelder Claim}
\iint_{Q_{r_{1}}}{\tilde \psi}_{\alpha,\,M}(V_{\varepsilon})V_{\varepsilon}^{2}\,{\mathrm d}x{\mathrm d}t\le \frac{C(n,\,p,\,\lambda,\,\Lambda,\,K,\,\alpha,\,M_{0})}{(r_{2}-r_{1})^{2}}\iint_{Q_{r_{2}}}\left(V_{\varepsilon}^{\alpha+p}+1\right)\,{\mathrm d}x{\mathrm d}t
\end{equation}
for $\alpha\in\lbrack 0,\,\infty)$, $M\in(1,\,\infty)$, and $r_{1},\,r_{2}\in(0,\,R\rbrack$ with $r_{1}<r_{2}$, provided $V_{\varepsilon}\in L^{\alpha+p}(Q_{r_{2}})$. 
In fact, letting $M\to\infty$ in (\ref{Eq (Section 5): Reversed Hoelder Claim}) and recalling (\ref{Eq (Section 2): psi-tilde estimate 2}), we have 
\begin{align*}
&\iint_{Q_{r_{1}}}V_{\varepsilon}^{\alpha+2}\,{\mathrm d}x{\mathrm d}t \\ 
&\le \iint_{Q_{r_{1}}}\left(V_{\varepsilon}^{\alpha+p}+1\right)\,{\mathrm d}x{\mathrm d}t+\limsup_{M\to\infty}\iint_{Q_{r_{1}}} {\tilde \psi}_{\alpha,\,M}(V_{\varepsilon})V_{\varepsilon}^{2}\,{\mathrm d}x{\mathrm d}t\\ 
&\le \frac{C(n,\,p,\,\lambda,\,\Lambda,\,K,\,\alpha,\,M_{0})}{(r_{2}-r_{1})^{2}}\iint_{Q_{r_{2}}}\left(V_{\varepsilon}^{\alpha+p}+1\right)\,{\mathrm d}x{\mathrm d}t
\end{align*}
by Beppo Levi's monotone convergence theorem.
By this estimate and an iteration argument in finitely many steps, for any $m\in{\mathbb N}$, we can deduce (\ref{Eq (Section 5): Reversed Hoelder}) with $q=p+(2-p)m$. 
Therefore, by H\"{o}lder's inequality, it is easy to verify (\ref{Eq (Section 5): Reversed Hoelder}) for arbitrary $q\in(p,\,\infty)$.

To prove (\ref{Eq (Section 5): Reversed Hoelder Claim}), we choose $\eta$ and $\phi_{\mathrm c}$ satisfying (\ref{Eq (Section 4): Choice of Cut-Off}). 
Integrating by parts, we obtain
\begin{align*}
& \iint_{Q_{r_{1}}}{\tilde \psi}_{\alpha,\,M}(V_{\varepsilon})V_{\varepsilon}^{2}\,{\mathrm d}x{\mathrm d}t=\iint_{Q_{r_{2}}} {\tilde \psi}_{\alpha,\,M}(V_{\varepsilon}) V_{\varepsilon}^{2} \eta^{2}\phi_{\mathrm c}\,{\mathrm d}x{\mathrm d}t  \\ 
&\le \iint_{Q_{r_{2}}}{\tilde \psi}_{\alpha,\,M}(V_{\varepsilon}) \eta^{2}\phi_{\mathrm c}\,{\mathrm d}x{\mathrm d}t +C_{n}\iint_{Q_{r}}{\tilde\psi}_{\alpha,\,M}(V_{\varepsilon}) \lvert u_{\varepsilon}\rvert\lvert\nabla\eta\rvert\eta\phi_{\mathrm c}\,{\mathrm d}x{\mathrm d}t\\ 
&\quad +C_{n}\int_{Q_{r_{2}}}\left({\tilde \psi}_{\alpha,\,M}(V_{\varepsilon})\lvert \nabla^{2}u_{\varepsilon}\rvert +{\tilde \psi}_{\alpha,\,M}^{\prime}(V_{\varepsilon})\lvert\nabla V_{\varepsilon}\rvert V_{\varepsilon}\right)\lvert u_{\varepsilon}\rvert\eta^{2}\phi_{\mathrm c}\,{\mathrm d}x{\mathrm d}t\\ 
&\le \frac{C(n,\,M_{0})}{r_{2}-r_{1}}\iint_{Q_{r_{2}}}{\tilde \psi}_{\alpha,\,M}(V_{\varepsilon})\,{\mathrm d}x{\mathrm d}t \\ 
&\quad +C_{n}M_{0}\left[\iint_{Q_{r_{2}}}V_{\varepsilon}^{p-2}\left(\lvert \nabla^{2} u_{\varepsilon}\rvert^{2}{\tilde \psi}_{\alpha,\,M}(V_{\varepsilon})+\lvert\nabla V_{\varepsilon}\rvert^{2}{\tilde\psi}_{\alpha,\,M}^{\prime}(V_{\varepsilon})V_{\varepsilon} \right)\eta^{2}\phi_{\mathrm{c}}\,{\mathrm d}x{\mathrm d}t \right]^{1/2}\\ 
&\quad \quad \cdot\left[\iint_{Q_{r_{2}}}V_{\varepsilon}^{2-p}\left({\tilde\psi}_{\alpha,\,M}(V_{\varepsilon})+{\tilde\psi}_{\alpha,\,M}^{\prime}(V_{\varepsilon})V_{\varepsilon} \right) \eta^{2}\phi_{\mathrm c}\,{\mathrm d}x{\mathrm d}t \right]^{1/2},
\end{align*}
where we have used (\ref{Eq (Section 5): Bounds of u-epsilon}) and the Cauchy--Schwarz inequality.
By (\ref{Eq (Section 2): psi-tilde estimate 1}), (\ref{Eq (Section 4): Choice of Cut-Off}), (\ref{Eq (Section 5): Energy estimate 1}) and Young's inequality, we have
\begin{align*}
  &\iint_{Q_{r_{1}}}{\tilde \psi}_{\alpha,\,M}(V_{\varepsilon})V_{\varepsilon}^{2}\,{\mathrm d}x{\mathrm d}t\\ 
  &\le \frac{1}{4}\iint_{Q_{r_{2}}}{\tilde \psi}_{\alpha,\,M}(V_{\varepsilon})V_{\varepsilon}^{2}\,{\mathrm d}x{\mathrm d}t+\frac{C}{(r_{2}-r_{1})^{2}}\iint_{Q_{r_{2}}}\left({\tilde \psi}_{\alpha,\,M}(V_{\varepsilon})\left(V_{\varepsilon}^{p}+1\right)+V_{\varepsilon}^{\alpha+2-p}\right)\,{\mathrm d}x{\mathrm d}t\\ 
  &\le \frac{1}{4}\iint_{Q_{r_{2}}}{\tilde \psi}_{\alpha,\,M}(V_{\varepsilon})V_{\varepsilon}^{2}\,{\mathrm d}x{\mathrm d}t+\frac{C}{(r_{2}-r_{1})^{2}}\iint_{Q_{r_{2}}}\left(V_{\varepsilon}^{\alpha+p}+1\right)\,{\mathrm d}x{\mathrm d}t,
\end{align*}
where the constant $C\in(1,\,\infty)$ depends at most on $n$, $p$, $\lambda$, $\Lambda$, $K$, $\alpha$, and $M_{0}$.
The desired estimate (\ref{Eq (Section 5): Reversed Hoelder Claim}) follows from Lemma \ref{Lemma: Absorbing Iteration}.
\end{proof}

From (\ref{Eq (Section 5): Energy estimate 2}), we complete the case $q=\infty$ by Moser's iteration.
\begin{proposition}\label{Prop: Moser's iteration}
  Let $n$ and $p$ satisfy (\ref{Eq (Section 1): Subcritical Range}).
  Fix an exponent $q$ satisfying
  \[q_{\mathrm c}\coloneqq \frac{n(2-p)}{2}<q<\infty,\quad \textrm{and}\quad q\ge 2.\]
  Let $u_{\varepsilon}$ be a weak solution to (\ref{Eq (Section 3): Approximate Equation}) in ${\tilde Q}$.
  Fix $Q_{R}(x_{0},\,t_{0})\Subset {\tilde Q}$ with $R\in(0,\,1)$, and let (\ref{Eq (Section 5): Improved reg}) be in force.
  Then, there holds
  \begin{equation}\label{Eq (Section 5): L-infty L-q}
    \esssup_{Q_{R/2}}V_{\varepsilon}\le C\left(\fiint_{Q_{R}}(1+V_{\varepsilon})^{q} \,{\mathrm d}x{\mathrm d}t \right)^{1/(q-q_{\mathrm c})}
  \end{equation}
  Here the constant $C\in(1,\,\infty)$ depends at most on $n$, $p$, $q$, $\lambda$, $\Lambda$, and $K$.
\end{proposition}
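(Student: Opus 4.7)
The strategy is a standard parabolic Moser iteration, but with careful bookkeeping of two competing powers that together force the critical threshold $q_{\mathrm c}=n(2-p)/2$. The starting point is the second energy estimate (\ref{Eq (Section 5): Energy estimate 2}) applied with $\psi=\psi_{\alpha,M}$ for $\alpha\in\lbrack 0,\infty)$; after letting $M\to\infty$ by Beppo Levi's theorem and using that $W_{\varepsilon}\ge 1$ together with $p\le 2$, for concentric cylinders $Q_{r_{1}}\Subset Q_{r_{2}}\Subset Q_{R}$ and standard cut-offs as in (\ref{Eq (Section 4): Choice of Cut-Off}), I expect to obtain
\[
\esssup_{t\in I_{r_{2}}}\int_{B_{r_{2}}}W_{\varepsilon}^{\alpha+2}\eta^{2}\phi_{\mathrm c}\,{\mathrm d}x+\iint_{Q_{r_{2}}}\left\lvert\nabla W_{\varepsilon}^{(\alpha+p)/2}\right\rvert^{2}\eta^{2}\phi_{\mathrm c}\,{\mathrm d}x{\mathrm d}t\le \frac{C(\alpha+1)^{2}}{(r_{2}-r_{1})^{2}}\iint_{Q_{r_{2}}}W_{\varepsilon}^{\alpha+2}\,{\mathrm d}x{\mathrm d}t,
\]
where the gradient identity $|\nabla W_{\varepsilon}^{(\alpha+p)/2}|^{2}\sim W_{\varepsilon}^{\alpha+p-2}|\nabla W_{\varepsilon}|^{2}$ comes from the factor $\psi'_{\alpha,M}(W_{\varepsilon})W_{\varepsilon}$ in (\ref{Eq (Section 5): Energy estimate 2}).

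Next I would upgrade this two-sided control via the parabolic Gagliardo--Nirenberg interpolation. Setting $v_{\varepsilon}\coloneqq W_{\varepsilon}^{(\alpha+p)/2}\eta\phi_{\mathrm c}^{1/2}$, the $L^{\infty}$-in-time bound on $\int v_{\varepsilon}^{\,a}$ with $a\coloneqq 2(\alpha+2)/(\alpha+p)$, the Sobolev embedding $W_{0}^{1,2}(B_{r_{2}})\hookrightarrow L^{2n/(n-2)}(B_{r_{2}})$ applied to $v_{\varepsilon}(\cdot,t)$, and H\"older interpolation in space (with the elementary identity $\tfrac{1}{2}-\tfrac{1}{2^{\ast}}=\tfrac{1}{n}$) yield $v_{\varepsilon}\in L^{2+2a/n}(Q_{r_{2}})$. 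Unwinding powers converts this into
\[
\iint_{Q_{r_{1}}}W_{\varepsilon}^{(\alpha+p)+2(\alpha+2)/n}\,{\mathrm d}x{\mathrm d}t\le \left[\frac{C(\alpha+1)^{\gamma}}{(r_{2}-r_{1})^{2}}\iint_{Q_{r_{2}}}W_{\varepsilon}^{\alpha+2}\,{\mathrm d}x{\mathrm d}t\right]^{\kappa},
\]
for explicit $\kappa=1+2/n$ and some $\gamma\in(1,\infty)$.

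With this reverse H\"older estimate in hand, I set up the Moser iteration by choosing $\alpha_{l}\coloneqq q_{l}-2$, where the exponent sequence satisfies the recursion $q_{l+1}=\kappa q_{l}-(2-p)$ forced by matching $(\alpha_{l}+p)+2(\alpha_{l}+2)/n=q_{l+1}$. Solving this recursion explicitly gives $q_{l}=\kappa^{l}(q-q_{\mathrm c})+q_{\mathrm c}$, so $q_{l}\to\infty$ precisely under the hypothesis $q>q_{\mathrm c}$, and $q_{l}\ge(q-q_{\mathrm c})(\kappa^{l}-1)+q\ge(q-q_{\mathrm c})(\kappa^{l}-1)$. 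Taking $r_{l}\coloneqq R/2+R/2^{l+1}$, so that $(r_{l}-r_{l+1})^{-2}\lesssim 4^{l}/R^{2}$, and setting $Y_{l}\coloneqq\iint_{Q_{r_{l}}}(1+W_{\varepsilon})^{q_{l}}\,{\mathrm d}x{\mathrm d}t$, the above reverse H\"older estimate produces the chain $Y_{l+1}\le(AB^{l}Y_{l})^{\kappa}$ for $A=C/R^{2}$ and $B=4\cdot{\tilde\kappa}^{\gamma}$ with ${\tilde\kappa}>\kappa$ absorbing the $(\alpha_{l}+1)^{\gamma}$ growth.

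Finally, Lemma \ref{Lemma: Moser iteration lemma} applied with $\mu=q-q_{\mathrm c}$ and $p_{0}=q$ produces $\esssup_{Q_{R/2}}W_{\varepsilon}\le C(\fiint_{Q_{R}}(1+W_{\varepsilon})^{q})^{1/(q-q_{\mathrm c})}$, and the compatibility $V_{\varepsilon}\le c_{n}W_{\varepsilon}$ from (\ref{Eq (Section 5): Compatibility}) transfers the bound to $V_{\varepsilon}$, proving (\ref{Eq (Section 5): L-infty L-q}). The main technical obstacle is the mismatch between the two powers in the energy inequality, $\alpha+2$ for the time-sup and $\alpha+p$ for the gradient term; this mismatch is what prevents a pure scaling $q_{l+1}=\kappa q_{l}$ and instead generates the deficit $-(2-p)$ in the recursion, from which the sharp threshold $q_{\mathrm c}=n(2-p)/2$ emerges as the fixed point $q_{\mathrm c}=\kappa q_{\mathrm c}-(2-p)$. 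The additional hypothesis $q\ge 2$ is needed to start the iteration with $\alpha_{0}=q-2\ge 0$ so that $\psi_{\alpha_{0},M}$ is admissible.
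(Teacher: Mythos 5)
Your proposal is correct and follows essentially the same route as the paper: derive the reverse H\"older inequality for $W_{\varepsilon}$ from the second energy estimate (\ref{Eq (Section 5): Energy estimate 2}) via the parabolic Sobolev/Gagliardo--Nirenberg interpolation, then iterate with Lemma \ref{Lemma: Moser iteration lemma}, identifying $q_{\mathrm c}$ as the fixed point of $q\mapsto\kappa q-(2-p)$, and finally transfer from $W_{\varepsilon}$ to $V_{\varepsilon}$ via (\ref{Eq (Section 5): Compatibility}). The only cosmetic differences are that you pass $M\to\infty$ before applying Sobolev rather than after, and your $Y_{l}$ should be normalized to $(\iint (1+W_{\varepsilon})^{q_{l}})^{1/q_{l}}$ so that the recursion takes exactly the form $Y_{l+1}^{q_{l+1}}\le(AB^{l}Y_{l}^{q_{l}})^{\kappa}$ required by Lemma \ref{Lemma: Moser iteration lemma}.
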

\begin{proof}
 To prove (\ref{Eq (Section 5): L-infty L-q}), we claim that for every $\beta\in \lbrack 2,\,\infty)$, there holds
 \begin{equation}\label{Eq (Section 5): Reversed Hoelder for Moser iteration}
   \iint_{Q_{r_{1}}}W_{\varepsilon}^{\kappa\beta+p-2} \le \left[\frac{C(\beta-1)^{\gamma}}{(r_{2}-r_{1})^{2}} \iint_{Q_{r_{2}}}W_{\varepsilon}^{\beta}\,{\mathrm d}x{\mathrm d}t\right]^{\kappa},
 \end{equation}
where $\kappa\coloneqq 1+2/n$, $\gamma\coloneqq 2(1+1/(n+2))$, and the constant $C\in(1,\,\infty)$ depends at most on $n$, $p$, $\lambda$, $\Lambda$, $K$.
Fix $r_{1},\,r_{2}\in(0,\,R\rbrack$ with $r_{1}<r_{2}$, and choose $\eta$ and $\phi_{\mathrm{c}}$ satisfying (\ref{Eq (Section 4): Choice of Cut-Off}).
By (\ref{Eq (Section 2): psi estimate 0}) with $r=2$, (\ref{Eq (Section 5): Energy estimate 2}), H\"{o}lder's inequality and the continuous embedding $W_{0}^{1,\,2}(B_{r_{2}})\hookrightarrow L^{\frac{2n}{n-2}}(B_{r_{2}})$, we obtain
\begin{align*}
  &\iint_{Q_{r_{1}}}\Psi_{\alpha,\,M}(W_{\varepsilon})^{\frac{2}{n}}\psi_{\alpha,\,M}(W_{\varepsilon})W_{\varepsilon}^{p} \\ 
  &\le C_{n}\left(\esssup_{t_{0}-r_{2}^{2}<\tau<t_{0}}\int_{B_{r_{2}}\times \{\tau\}}\Psi_{\alpha,\,M}(W_{\varepsilon})\eta^{2}\phi_{\mathrm c}\,{\mathrm d}x\right)^{\frac{2}{n}} \iint_{Q_{r_{2}}}\left\lvert\nabla\left(\eta\psi_{\alpha,\,M}(W_{\varepsilon})^{\frac{1}{2}}W_{\varepsilon}^{\frac{p}{2}}\right) \right\rvert^{2}\phi_{\mathrm c} \,{\mathrm d}x{\mathrm d}t\\ 
  &\le C(n,\,p,\,\lambda,\,\Lambda,\,K)\left[\frac{(1+\alpha)^{2}}{(r_{2}-r_{1})^{2}}\iint_{Q_{r_{2}}}\psi_{\alpha,\,M}(W_{\varepsilon})W_{\varepsilon}^{2}\,{\mathrm d}x{\mathrm d}t\right]^{\kappa},
\end{align*}
where we note $W_{\varepsilon}\ge 1$ and therefore $W_{\varepsilon}^{p}\le W_{\varepsilon}^{2}$.
Letting $M\to \infty$ and recalling (\ref{Eq (Section 2): Monotone conv of psi-s}), we conclude (\ref{Eq (Section 5): Reversed Hoelder for Moser iteration}) by Beppo Levi's monotone convergence theorem.

We set the sequences $\{q_{l}\}_{l=0}^{\infty}\subset \lbrack q,\,\infty)$, $\{R_{l}\}_{l=0}^{\infty}\subset (R/2,\,R\rbrack$, $\{Y_{l}\}_{l=0}^{\infty}\subset {\mathbb R}_{\ge 0}$ as
\[q_{l}\coloneqq \mu\kappa^{l}+q_{\mathrm c},\quad R_{l}\coloneqq \frac{1+2^{-l}}{2}R,\quad \text{and}\quad Y_{l}\coloneqq \left(\iint_{Q_{R_{l}}}W_{\varepsilon}^{q_{l}}\,{\mathrm d}x{\mathrm d}t\right)^{1/q_{l}}\]
for each $l\in{\mathbb Z}_{\ge 0}$, where $\mu\coloneqq q-q_{\mathrm c}$.
Then, similarly to Proposition \ref{Prop: L-infty bounds}, we can find the constant ${\tilde \kappa}={\tilde \kappa}(\kappa,\,q,\,q_{\mathrm c})\in(\kappa,\,\infty)$ such that (\ref{Eq (Section 4): Iteration on exponents}) holds for every $l\in{\mathbb Z}_{\ge 0}$.
Hence, (\ref{Eq (Section 5): Reversed Hoelder for Moser iteration}) with $\beta\coloneqq q_{l}\ge q_{0}=q\ge 2$ yields
\[Y_{l+1}^{q_{l+1}}\le \left(AB^{l}Y_{l}^{q_{l}}\right)^{\kappa},\quad q_{l}\ge \mu\left(\kappa^{l}-1\right)\quad \text{for all }l\in{\mathbb Z}_{\ge 0},\]
where we set $A\coloneqq CR^{-2}\in(1,\,\infty)$ for some constant $C\in(1,\,\infty)$, and $B\coloneqq 4{\tilde \kappa}^{\gamma}\in(1,\,\infty)$.
By applying Lemma \ref{Lemma: Moser iteration lemma} and recalling (\ref{Eq (Section 5): Compatibility}), we have 
\begin{align*}
  &\esssup_{Q_{R/2}}\,V_{\varepsilon}\le C_{n}\esssup_{Q_{R/2}}\,W_{\varepsilon}\le C_{n}\limsup_{l\to\infty}\,Y_{l}\\ 
  &\le C(n,\,p,\,q,\,\lambda,\,\Lambda,\,K)\left(R^{-2\kappa^{\prime}}\iint_{Q_{R}}W_{\varepsilon}^{q}\,{\mathrm d}x{\mathrm d}t \right)^{1/\mu}\\ 
  &\le C(n,\,p,\,q,\,\lambda,\,\Lambda,\,K)\left(\fiint_{Q_{R}}\left(1+V_{\varepsilon}\right)^{q}\,{\mathrm d}x{\mathrm d}t \right)^{1/(q-q_{\mathrm c})},
\end{align*}
which completes the proof.
\end{proof}

Section \ref{Sect: L-p to L-infty} is completed by showing Theorem \ref{Thm: Grad Bound}.
\begin{theorem}\label{Thm: Grad Bound}
  Let $n$ and $p$ satisfy (\ref{Eq (Section 1): Subcritical Range}).
  Assume that $u_{\varepsilon}$ is a weak solution to (\ref{Eq (Section 3): Approximate Equation}) in ${\tilde Q}$. 
  Fix $Q_{R}(x_{0},\,t_{0})\Subset {\tilde Q}$ with $R\in(0,\,1)$, and let (\ref{Eq (Section 5): Bounds of u-epsilon}) and (\ref{Eq (Section 5): Improved reg}), and $\lVert \nabla u_{\varepsilon}\rVert_{L^{p}(Q_{R})}\le M_{1}$ be in force.
  Here the constant $M_{1}\in(1,\,\infty)$ is independent of $\varepsilon\in(0,\,1)$.
  Then, for each $r\in(0,\,R)$, there exists a constant $C\in(1,\,\infty)$, depending at most on $n$, $p$, $\lambda$, $\Lambda$, $K$, $M_{0}$, $M_{1}$, $R$, and $r$, such that 
  \[\esssup_{Q_{r}}V_{\varepsilon}\le C.\]
\end{theorem}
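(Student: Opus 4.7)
The plan is to chain together Propositions \ref{Prop: Higher integrability} and \ref{Prop: Moser's iteration} on nested cylinders. The $L^p$ hypothesis on $\nabla u_{\varepsilon}$ over $Q_R$ is upgraded to an $L^q$ bound on a slightly smaller cylinder by the first proposition, and the resulting $L^q$ bound (for $q$ above the critical Moser threshold $q_{\mathrm c}=n(2-p)/2$) is then promoted to the desired $L^\infty$ bound on $Q_r$ by the second. The assumption (\ref{Eq (Section 5): Bounds of u-epsilon}) is the crucial input enabling Proposition \ref{Prop: Higher integrability}, since without $u_{\varepsilon}\in L^\infty$ one cannot control the $L^p$-to-$L^q$ self-improvement.

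More concretely, set $R^{\prime}\coloneqq (R+r)/2$ so that $r<R^{\prime}<R$, and fix any exponent $q$ with $q>\max\{q_{\mathrm c},\,p,\,2\}$; such a finite $q$ exists because $n\ge 3$ and $p\in(1,\,2n/(n+2)\rbrack$ imply $q_{\mathrm c}\le n/2<\infty$. The two-radius reverse H\"{o}lder estimate (\ref{Eq (Section 5): Reversed Hoelder Claim}) inside the proof of Proposition \ref{Prop: Higher integrability}, together with the same finite-step iteration used there, yields
\[
\iint_{Q_{R^{\prime}}}V_{\varepsilon}^{q}\,{\mathrm d}x{\mathrm d}t\le C_{1}\iint_{Q_{R}}\bigl(V_{\varepsilon}^{p}+1\bigr)\,{\mathrm d}x{\mathrm d}t\le C_{1}\bigl(M_{1}^{p}+\lvert Q_{R}\rvert\bigr),
\]
for a constant $C_{1}$ depending only on $n$, $p$, $q$, $\lambda$, $\Lambda$, $K$, $M_{0}$, and $R-r$. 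Thus a uniform (in $\varepsilon$) $L^{q}$ bound for $V_{\varepsilon}$ on $Q_{R^{\prime}}$ is in force.

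Now apply Proposition \ref{Prop: Moser's iteration} to the pair $Q_{r}\subset Q_{R^{\prime}}$. As in its original proof, the reverse H\"{o}lder inequality (\ref{Eq (Section 5): Reversed Hoelder for Moser iteration}) already gives two-radius control, and Moser's iteration via Lemma \ref{Lemma: Moser iteration lemma} on the geometric sequence $R_{l}$ interpolating between $r$ and $R^{\prime}$ delivers
\[
\esssup_{Q_{r}}V_{\varepsilon}\le C_{2}\left(\fiint_{Q_{R^{\prime}}}(1+V_{\varepsilon})^{q}\,{\mathrm d}x{\mathrm d}t\right)^{1/(q-q_{\mathrm c})},
\]
where $C_{2}$ depends on $n$, $p$, $q$, $\lambda$, $\Lambda$, $K$, and $R^{\prime}-r$. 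Combining with the previous step yields the claimed bound $\esssup_{Q_{r}}V_{\varepsilon}\le C$, the constant $C$ depending only on $n$, $p$, $\lambda$, $\Lambda$, $K$, $M_{0}$, $M_{1}$, $R$, and $r$.

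The only real issue is a bookkeeping one: Propositions \ref{Prop: Higher integrability} and \ref{Prop: Moser's iteration} are stated for the concentric pair $(Q_{R/2},\,Q_{R})$, but we need them on generic nested pairs $(Q_{r_{1}},\,Q_{r_{2}})$. This is not an obstacle at all — both proofs are built on two-radius Caccioppoli-type estimates whose constants degrade like $(r_{2}-r_{1})^{-2}$, and the iteration lemmata \ref{Lemma: Absorbing Iteration} and \ref{Lemma: Moser iteration lemma} absorb exactly this dependence. No new estimate is required beyond a careful choice of the intermediate radius $R^{\prime}$.
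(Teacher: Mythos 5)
Your proof is correct and matches the paper's argument essentially verbatim: the paper also sets $\tilde{R}\coloneqq (R+r)/2$, fixes $q\in(q_{\mathrm c},\infty)\cap[2,\infty)$, applies Proposition \ref{Prop: Higher integrability} to upgrade the $L^p$ bound on $Q_R$ to an $L^q$ bound on $Q_{\tilde R}$, and then invokes Proposition \ref{Prop: Moser's iteration} on the pair $Q_r\subset Q_{\tilde R}$. Your closing remark about the bookkeeping (that the two propositions, though stated for $(Q_{R/2},Q_R)$, really rest on two-radius Caccioppoli estimates and hence apply to any nested pair) is exactly the implicit point the paper relies on when passing to $(Q_r,Q_{\tilde R})$.
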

\begin{proof}
  Choose and fix $q\in(q_{\mathrm c},\,\infty)\cap \lbrack 2,\,\infty)$. 
  By Proposition \ref{Prop: Higher integrability}, there exists a constant ${\tilde C}\in(1,\,\infty)$, depending at most on $n$, $p$, $q$, $\lambda$, $\Lambda$, $K$, $M_{0}$, $M_{1}$, $R$, and $r$, such that we have
  \[\lVert V_{\varepsilon} \rVert_{L^{q}(Q_{\tilde R})}\le {\tilde C},\quad \text{where}\quad {\tilde R}\coloneqq \frac{R+r}{2}.\]
  Combining this bound with Proposition \ref{Prop: Moser's iteration}, we conclude Theorem \ref{Thm: Grad Bound}.
\end{proof}

\section{The proof of main theorem}\label{Sect:MainTheorem}
\subsection{A priori continuity estimates of truncated gradients}\label{Subsect: A priori Hoelder}
We infer a basic result of a priori H\"{o}lder estimates.
\begin{theorem}\label{Thm: A priori Hoelder}
Fix $\delta\in(0,\,1)$, and let $\varepsilon\in(0,\,\delta/8)$.
Assume that $u_{\varepsilon}$ is a weak solution to (\ref{Eq (Section 3): Approximate Equation}) in ${\tilde Q}$, and let (\ref{Eq (Section 5): Improved reg}) be in force for a fixed $Q_{R}=Q_{R}(x_{0},\,t_{0})\Subset{\tilde Q}$.
Also, let the positive number $\mu_{0}$ satisfy 
\begin{equation}\label{Eq (Section 6): Lip bounds}
  \esssup_{Q_{r}} V_{\varepsilon}\le \delta+\mu_{0},
\end{equation}
where $Q_{r}=Q_{r}(x_{\ast},\,t_{\ast})\Subset Q_{R}$ with $r\in(0,\,1)$.
Then, there hold
\[\left\lvert {\mathcal G}_{2\delta,\,\varepsilon}(\nabla u_{\varepsilon}) \right\rvert\le \mu_{0}\quad \textrm{in}\quad Q_{r_{0}}(x_{\ast},\,t_{\ast}),\] 
and
\[\left\lvert {\mathcal G}_{2\delta,\,\varepsilon}(\nabla u_{\varepsilon}(X_{1}))-{\mathcal G}_{2\delta,\,\varepsilon}(\nabla u_{\varepsilon}(X_{2})) \right\rvert\le C\left(\frac{d_{\mathrm{p}}(X_{1},X_{2})}{r_{0}}\right)^{\alpha}\mu_{0}\]
for all $X_{1}=(x_{1},\,t_{1})$, $X_{2}=(x_{2},\,t_{2})\in Q_{r_{0}/2}(x_{\ast},\,t_{\ast})$.
Here the radius $r_{0}\in(0,\,r/4)$, the exponent $\alpha\in(0,\,1)$, and the constant $C\in(1,\,\infty)$ depend at most on $n$, $p$, $\lambda$, $\Lambda$, $K$, $\mu_{0}$, and $\delta$.
\end{theorem}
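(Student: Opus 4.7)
The pointwise bound $|\mathcal{G}_{2\delta,\varepsilon}(\nabla u_{\varepsilon})|\le \mu_{0}$ is immediate from (\ref{Eq (Section 6): Lip bounds}): by definition $|\mathcal{G}_{2\delta,\varepsilon}(\nabla u_{\varepsilon})| = (V_{\varepsilon}-2\delta)_{+}$, and on $Q_{r}$ this is bounded by $(\mu_{0}-\delta)_{+}\le \mu_{0}$. The same estimate then holds on any subcylinder $Q_{r_{0}}(x_{\ast},t_{\ast})\subset Q_{r}$, so the sup bound is essentially free.

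The main content is the Hölder decay. The key observation is that $\mathcal{G}_{2\delta,\varepsilon}(\nabla u_{\varepsilon})$ is supported in $\{V_{\varepsilon}>2\delta\}$; on that set $|\nabla u_{\varepsilon}|^{2}\ge (2\delta)^{2}-\varepsilon^{2}\ge (2\delta)^{2}-(\delta/8)^{2}>\delta^{2}$ because $\varepsilon\in(0,\delta/8)$. Combining this with the ceiling $V_{\varepsilon}\le \delta+\mu_{0}$ from (\ref{Eq (Section 6): Lip bounds}), the bounds (\ref{Eq (Section 3): Estimates on eigenvalues}) give
\[
 \lambda_{\ast}(\delta,\mu_{0})\,\mathrm{id}_{n} \leqslant \nabla^{2}E^{\varepsilon}(\nabla u_{\varepsilon}) \leqslant \Lambda_{\ast}(\delta,\mu_{0})\,\mathrm{id}_{n} \quad \text{on }\{V_{\varepsilon}>2\delta\},
\]
with constants independent of $\varepsilon$. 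Hence the differentiated equation (\ref{Eq (Section 5): Weak form differentiated}) is uniformly parabolic on the support of the truncated gradient, and the degeneracy of $\nabla^{2}E$ near the facet is entirely hidden in the set where $\mathcal{G}_{2\delta,\varepsilon}(\nabla u_{\varepsilon})$ vanishes.

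With this non-degenerate frame in place, I would follow the intrinsic scaling plus De Giorgi alternative strategy of \cite{MR1230384}, adapted to the present setting as in \cite[Theorem 2.8]{T-parabolic}. Concretely: (i) apply the Steklov-averaged differentiated equation tested against truncations of components of the form $(\partial_{x_{j}}u_{\varepsilon}-2\delta-k)_{\pm}$ or, more symmetrically, of $\mathcal{G}_{2\delta,\varepsilon}(\nabla u_{\varepsilon})\cdot e_{j}$, to derive Caccioppoli inequalities on intrinsic cylinders $Q_{\rho}^{(\lambda)}=B_{\rho}(x_{\ast})\times(t_{\ast}-\lambda^{2-p}\rho^{2},t_{\ast}]$, with $\lambda$ chosen comparable to the local oscillation of $\mathcal{G}_{2\delta,\varepsilon}(\nabla u_{\varepsilon})$; (ii) execute the standard dichotomy: either $\{V_{\varepsilon}\le 2\delta+\theta\mu_{0}\}$ occupies a large measure fraction of the cylinder, in which case De Giorgi iteration on level sets of $V_{\varepsilon}$ propagates this to a smaller cylinder, or the complementary set dominates and one exploits uniform parabolicity to obtain $L^{\infty}$–$L^{2}$ decay through a logarithmic lemma; (iii) iterate across dyadic cylinders to extract oscillation decay of geometric rate, yielding the Campanato–type Hölder estimate with exponent $\alpha(\delta,\mu_{0})$ and reference radius $r_{0}(\delta,\mu_{0})$ chosen so that $Q_{r_{0}}^{(\lambda)}\subset Q_{r/4}$.

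The main technical obstacle will be the interface region $\{V_{\varepsilon}\approx 2\delta\}$, where test functions need to be carefully chosen so that every integrand appearing in the Caccioppoli and comparison estimates is supported strictly inside $\{V_{\varepsilon}>2\delta\}$, yet still sees the ellipticity bound above. The moduli of continuity in (\ref{Eq (Section 1): modulus of continuity of Hess Ep}) and (\ref{Eq (Section 1): (Assumption) modulus of continuity of Hess E1}) enter at the freezing step of the De Giorgi alternative, where one compares $\nabla u_{\varepsilon}$ to the gradient of an $A$-caloric approximation with constant matrix $A$. All constants produced by the iteration depend on $\delta$ and $\mu_{0}$ through $\lambda_{\ast},\Lambda_{\ast}$ and the moduli $\omega_{1},\omega_{p}$; tracking these dependencies carefully is what ultimately yields estimates uniform in $\varepsilon\in(0,\delta/8)$, which is exactly the uniformity required to pass to the limit $\varepsilon\to 0$ and recover the continuity of $\mathcal{G}_{\delta}(\nabla u)$ in the proof of Theorem \ref{Thm}.
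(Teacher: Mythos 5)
Your opening paragraph is exactly right: $\lvert{\mathcal G}_{2\delta,\varepsilon}(\nabla u_{\varepsilon})\rvert=(V_{\varepsilon}-2\delta)_{+}$, so (\ref{Eq (Section 6): Lip bounds}) gives the sup bound $\le(\mu_{0}-\delta)_{+}\le\mu_{0}$ trivially, and you have correctly identified the key structural fact: on $\{V_{\varepsilon}>2\delta\}$ with the ceiling $V_{\varepsilon}\le\delta+\mu_{0}$, the Hessian $\nabla^{2}E^{\varepsilon}(\nabla u_{\varepsilon})$ is uniformly elliptic with constants $\lambda_{\ast}(\delta,\mu_{0}),\Lambda_{\ast}(\delta,\mu_{0})$ independent of $\varepsilon\in(0,\delta/8)$. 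The role you assign to (\ref{Eq (Section 1): modulus of continuity of Hess Ep})--(\ref{Eq (Section 1): (Assumption) modulus of continuity of Hess E1}) in the comparison/freezing step is also the paper's.

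Where you depart from the paper — and where you also contradict yourself a little — is in proposing to run the De Giorgi alternative on intrinsic cylinders $Q_{\rho}^{(\lambda)}=B_{\rho}\times(t_{\ast}-\lambda^{2-p}\rho^{2},t_{\ast}]$. Section \ref{Subsect: A priori Hoelder} emphasizes that the proof of Theorem \ref{Thm: A priori Hoelder} (i.e.~\cite[Theorem~2.8, Propositions~2.9--2.10]{T-parabolic}) deliberately \emph{avoids} rescaling and intrinsic cylinders. The point of the truncation, which you yourself observe, is precisely to eliminate the need for intrinsic scaling: once the test functions are supported in $\{V_{\varepsilon}>2\delta\}$, the parabolicity constants are fixed numbers determined by $\delta$ and $\mu_{0}$, and standard cylinders suffice. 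Intrinsic scaling is the remedy for the $p$-Laplacian's scale-dependent parabolicity $\lvert\nabla u\rvert^{p-2}$; here the relevant degeneracy indicator is the ratio (\ref{Eq (Section 1): PR}), which blows up as $\nabla u\to 0$ in a non-homogeneous way that is not repaired by intrinsic rescaling, and is instead removed by the truncation at level $\delta$. The paper also runs the sublevel De~Giorgi step on the \emph{scalar} subsolution $U_{\delta,\varepsilon}=\lvert{\mathcal G}_{\delta,\varepsilon}(\nabla u_{\varepsilon})\rvert^{2}=\Psi(V_{\varepsilon})$ obtained from (\ref{Eq (Section 5): Weak Form}) with $\psi(\sigma)=2(1-\delta/\sigma)_{+}$, rather than on directional truncations $(\partial_{x_{j}}u_{\varepsilon}-2\delta-k)_{\pm}$; the vector-valued oscillation information is recovered only through the Campanato-type comparison step, where the second-order energy bound for $\nabla(V_{\varepsilon}^{p-1}\nabla u_{\varepsilon})$ enters. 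Your outline would likely still close (intrinsic scaling degenerates to ordinary scaling when the parabolicity constants are already uniform), but it re-imports machinery that the truncation is designed to render unnecessary, and it does not match the structure of the proof the paper is invoking.
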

As long as (\ref{Eq (Section 6): Lip bounds}) is guaranteed, Theorem \ref{Thm: A priori Hoelder} is shown as a special case of \cite[Theorem 2.8]{T-parabolic}, where an external force term is also treated.
We briefly note how the proof of \cite[Theorem 2.8]{T-parabolic} is carried out, in the special case of no external force term.
To explain the brief strategy, we go back to the weak formulation (\ref{Eq (Section 5): Weak Form}), which tells us two pieces of basic information.

The first is that the convex composite function $\Psi(V_{\varepsilon})$ is a weak subsolution to a parabolic equation, which is easy to notice by neglecting the integrals on the second line of (\ref{Eq (Section 5): Weak Form}).
When we choose $\psi(\sigma)\coloneqq 2(1-\delta/\sigma)_{+}$, then the composite function $\Psi(V_{\varepsilon})$ becomes $\lvert{\mathcal G}_{\delta,\,\varepsilon}(\nabla u_{\varepsilon})\rvert^{2}\eqqcolon U_{\delta,\,\varepsilon}$, where
\[{\mathcal G}_{\delta,\,\varepsilon}(\nabla u_{\varepsilon})\coloneqq \left(V_{\varepsilon}-\delta\right)_{+}\frac{\nabla u_{\varepsilon}}{\lvert\nabla u_{\varepsilon}\rvert}.\]
Noting that the support of $U_{\delta,\,\varepsilon}$ is contained in $\{V_{\varepsilon}\ge \delta\}$, where the equation becomes uniformly parabolic in the classical sense \cite[Lemma 3.2]{T-parabolic}, we can deduce a De Giorgi-type oscillation lemma \cite[Proposition 2.9]{T-parabolic}, provided that a sublevel set of $V_{\delta,\,\varepsilon}$ is suitably non-degenerate.

The second is that we can obtain local $L^{2}$-estimates related to the second order derivative $\nabla^{2}u_{\varepsilon}$.
In particular, by choosing $\psi(\sigma)$ behaving like $\sigma^{p}$, we can obtain local $L^{2}$-bounds concerning $\nabla (V_{\varepsilon}^{p-1}\nabla u_{\varepsilon})$ \cite[Lemma 3.3]{T-parabolic}.
By this local energy estimate, and the comparison with a heat flow, we can deduce a Campanato-type decay estimate \cite[Proposition 2.10]{T-parabolic}, provided a superlevelset of $V_{\varepsilon}$ is sufficiently large.
There, we also rely on the continuity of the mapping $z\mapsto\nabla^{2}E^{\varepsilon}  (z)$ over an annulus region $\{\delta\le \lvert z\rvert \le M\}$ for some fixed constant $M$.
Hence, (\ref{Eq (Section 1): modulus of continuity of Hess Ep}) and (\ref{Eq (Section 1): (Assumption) modulus of continuity of Hess E1}), which are never used in Sections \ref{Sect:Approximation}--\ref{Sect: L-p to L-infty}, are required in the proof of \cite[Proposition 2.10]{T-parabolic}.

In any possible cases, it is proved that the limit 
\[\Gamma_{2\delta,\,\varepsilon}(x_{0},\,t_{0})\coloneqq \lim_{r\to 0}\fiint_{Q_{r}(x_{0},\,t_{0})}{\mathcal G}_{2\delta,\,\varepsilon}(\nabla u_{\varepsilon})\,{\mathrm d}x{\mathrm d}t\in{\mathbb R}^{n}\]
exists for every $(x_{0},\,t_{0})\in Q_{r_{0}}(x_{\ast},\,t_{\ast})$. 
Here the radius $r_{0}$ and the exponent $\alpha$ are determined by \cite[Propositions 2.9--2.10]{T-parabolic}, depending on $n$, $p$, $\lambda$, $\Lambda$, $K$, $\omega_{1}$, $\omega_{p}$, $\mu_{0}$, and $\delta$.
Moreover, this limit satisfies
\[\fiint_{Q_{\rho}(x_{0},\,t_{0})}\left\lvert{\mathcal G}_{2\delta,\,\varepsilon}(\nabla u_{\varepsilon})-\Gamma_{2\delta,\,\varepsilon}(x_{0},\,t_{0})\right\rvert^{2}\,{\mathrm d}x{\mathrm d}t\le C\left(\frac{\rho}{r_{0}}\right)^{2\alpha}\]
for any radius $\rho\in (0,\,r_{0}\rbrack$.
Theorem \ref{Thm: A priori Hoelder} follows from this growth estimate.

To carry out classical arguments, including De Giorgi's truncation and a comparison argument in \cite[Propositions 2.9--2.10]{T-parabolic}, we often use the assumption \(\delta<\mu\).
Here $\mu$ is a positive parameter that satisfies 
\[\esssup_{Q}\,\lvert {\mathcal G}_{\delta,\,\varepsilon}(\nabla u_{\varepsilon})\rvert \le \mu\] 
for a fixed cylinder $Q\Subset Q_{r}(x_{\ast},\,t_{\ast})$.
The condition $\delta<\mu$ is not restrictive, since otherwise ${\mathcal G}_{2\delta,\,\varepsilon}(\nabla u_{\varepsilon})\equiv 0$ in $Q$, and hence there is nothing to show in the proof of Theorem \ref{Thm: A priori Hoelder}.
This truncation trick is substantially different from the intrinsic scaling arguments.
It should be emphasized that the intrinsic scaling argument relies on some uniform parabolicity of the $p$-Laplace operator, which can be measured by an analogous ratio defined as in the left-hand side of (\ref{Eq (Section 1): PR}).
To the contrary, for (\ref{Eq (Section 3): Approximate Equation}), its uniform parabolicity basically depends on the value of $V_{\varepsilon}$.
For this reason, in the proof of \cite[Theorem 2.8 and Propositions 2.9--2.10]{T-parabolic}, we avoid rescaling a weak solution $u_{\varepsilon}$.
Instead, we make the truncation with respect to the modulus $V_{\varepsilon}$, which enables us to treat (\ref{Eq (Section 3): Approximate Equation}) as it is some sort of uniform parabolic equation, depending on the truncation parameter $\delta\in(0,\,1)$.
This is verified by the assumption $\delta<\mu$, which plays an important role in deducing non-trivial energy estimates \cite[Lemmata 3.2--3.3]{T-parabolic}.

\subsection{Proof of main theorem}\label{Subsect: Last Subsection}
We conclude the paper by giving the proof of Theorem \ref{Thm}.
\begin{proof}
We fix parabolic cylinders $Q_{(0)}\Subset Q_{(1)}\Subset Q_{(2)} \Subset Q_{(3)} \Subset Q_{(4)}\Subset \Omega_{T}$ arbitrarily, and we claim the local H\"{o}lder continuity of ${\mathcal G}_{2\delta}(\nabla u)$ in $Q_{(0)}$.
We abbreviate
\[d\coloneqq \min\left\{\,\mathop{\mathrm{dist}_{\mathrm p}}\left(\partial_{\mathrm{p}}Q_{(k+1)},\,\partial_{\mathrm{p}}Q_{(k)}\right)\mathrel{}\middle|\mathrel{} k=0,\,1,\,2,\,3\,\right\}>0.\]
By Proposition \ref{Prop: L-infty bounds}, we have $\lVert u\rVert_{L^{\infty}(Q_{(3)})}\le M_{0}$, where the constant $M_{0}\in(1,\,\infty)$ depends at most on $n$, $p$, $s$, $\lVert u\rVert_{L^{s}(Q_{(4)})}$, and $d$.
For each fixed $\varepsilon\in(0,\,\delta/8)$, let $u_{\varepsilon}$ be the weak solution of (\ref{Eq (Section 3): Dirichlet Prob approx}) with $\Omega_{T}$ replaced by $Q_{(3)}$.
By Proposition \ref{Prop: Convergence}, there exists a constant $M_{1}$ such that we have
\[\lVert \nabla u_{\varepsilon}\rVert_{L^{p}(Q_{(2)})}\le M_{1}.\]
Also, we are allowed to choose a sequence $\{\varepsilon_{j}\}_{j}$, satisfying $\varepsilon_{j}\to 0$ as $j\to\infty$, such that \(\nabla u_{\varepsilon}\to \nabla u\) a.e. in $Q_{(2)}$.
In particular, it follows that
\begin{equation}\label{Eq (Section 2): Conv on G-2delta-epsilon}
{\mathcal G}_{2\delta,\,\varepsilon_{j}}(\nabla u_{\varepsilon_{j}})\to {\mathcal G}_{2\delta}(\nabla u)
\end{equation}
a.e.~in $Q_{(2)}$.
We should note $\lVert u\rVert_{L^{\infty}(Q_{(2)})}\le M_{0}$ by Corollary \ref{Lemma: WMP}.
Therefore, we can apply Theorem \ref{Thm: Grad Bound} to find the constant $\mu_{0}\in (1,\,\infty)$, which depends at most on $n$, $p$, $\lambda$, $\Lambda$, $K$, $M_{0}$, $M_{1}$, and $d$, such that (\ref{Eq (Section 6): Lip bounds}) holds for any $Q_{r}(x_{\ast},\,t_{\ast})\Subset Q_{(1)}$ with $r\in(0,\,1)$.
By Theorem \ref{Thm: A priori Hoelder} and a standard covering argument, we can apply the Arzel\'{a}--Ascoli theorem to ${\mathcal G}_{2\delta,\,\varepsilon}(\nabla u_{\varepsilon})\in C^{0}(Q_{(0)};\,{\mathbb R}^{n})$.
As a consequence, we may let the convergence (\ref{Eq (Section 2): Conv on G-2delta-epsilon}) hold uniformly in $Q_{(0)}$ by taking a subsequence if necessary.
Hence, for each fixed $\delta\in(0,\,1)$, ${\mathcal G}_{2\delta}(\nabla u)$ is H\"{o}lder continuous in $Q_{(0)}\Subset \Omega_{T}$, which completes the proof of ${\mathcal G}_{2\delta}(\nabla u)\in C^{0}(\Omega_{T};\,{\mathbb R}^{n})$.

By the definition of ${\mathcal G}_{\delta}$, it is easy to check that $\{{\mathcal G}_{2\delta}(\nabla u)\}_{\delta\in(0,\,1)}\subset C^{0}(\Omega_{T};\,{\mathbb R}^{n})$ is a Cauchy net, and therefore this has a uniform convergence limit $v_{0}\in C^{0}(\Omega_{T};\,{\mathbb R}^{n})$ as $\delta\to 0$. 
Combining with ${\mathcal G}_{2\delta}(\nabla u)\to\nabla u$ a.e.~in $\Omega_{T}$, we conclude $\nabla u=v_{0}\in C^{0}(\Omega_{T};\,{\mathbb R}^{n})$.
\end{proof}

\subsection*{Acknowledgment}
During the preparation of the paper, the author was partly supported by the Japan Society for the Promotion of Science through JSPS KAKENHI Grant Number 22KJ0861.

\addcontentsline{toc}{section}{References}
\bibliographystyle{plain}

\begin{thebibliography}{10}

\bibitem{MR0718944}
N.~D.~Alikakos and L.~C.~Evans.
\newblock Continuity of the gradient for weak solutions of a degenerate parabolic equation.
\newblock {\em J. Math. Pures Appl. (9)}, 62(3):253--268, 1983.


\bibitem{MR2033382}
F.~Andreu-Vaillo, V.~Caselles, and J.~M. Maz\'{o}n.
\newblock {\em Parabolic quasilinear equations minimizing linear growth functionals}, volume 223 of {\em Progress in Mathematics}.
\newblock Birkh\"{a}user Verlag, Basel, 2004.


\bibitem{BDGPdN}
V.~B{\"o}gelein, F.~Duzaar, R.~Giova, and A.~Passarelli~di Napoli.
\newblock Higher regularity in congested traffic dynamics.
\newblock {\em Math. Ann.}, 385(3-4):1823--1878, 2023.

\bibitem{BDGLS}
V.~B{\"o}gelein, F.~Duzaar, U.~Gianazza, N.~Liao, and C.~Scheven.
\newblock H\"{o}lder continuity of the gradient of solutions to doubly non-linear parabolic equations.
\newblock {\em arXiv preprint arXiv:2305.08539v1}, 2023.

\bibitem{BDLS}
V.~B\"{o}gelein, F.~Duzaar, N.~Liao, and C.~Scheven.
\newblock Gradient {H}\"{o}lder regularity for degenerate parabolic systems.
\newblock {\em Nonlinear Anal.}, 225:Paper No. 113119, 61, 2022.

\bibitem{Choe MR1135917}
H.~J.~Choe.
\newblock H\"{o}lder regularity for the gradient of solutions of certain singular parabolic systems.
\newblock {\em Comm. Partial Differential Equations}, 16(11):1709--1732, 1991.

\bibitem{CF MR3133426}
M.~Colombo and A.~Figalli.
\newblock Regularity results for very degenerate elliptic equations.
\newblock {\em J. Math. Pures Appl. (9)}, 101(1):94--117, 2014.



\bibitem{MR1230384}
E.~DiBenedetto.
\newblock {\em Degenerate Parabolic Equations}.
\newblock Universitext. Springer-Verlag, New York, 1993.

\bibitem{MR743967}
E.~DiBenedetto and A.~Friedman.
\newblock Regularity of solutions of nonlinear degenerate parabolic systems.
\newblock {\em J. Reine Angew. Math.}, 349:83--128, 1984.

\bibitem{MR783531}
E.~DiBenedetto and A.~Friedman.
\newblock {H}\"{o}lder estimates for nonlinear degenerate parabolic systems.
\newblock {\em J. Reine Angew. Math.}, 357:1--22, 1985.

\bibitem{MR814022}
E.~DiBenedetto and A.~Friedman.
\newblock Addendum to: ``{H}\"{o}lder estimates for nonlinear degenerate parabolic systems''.
\newblock {\em J. Reine Angew. Math.}, 363:217--220, 1985.

\bibitem{MR2865434}
E.~DiBenedetto, U.~Gianazza and V.~Vespri.
\newblock Harnack's inequality for degenerate and singular parabolic equations.
\newblock Springer Monographs in Mathematics. Springer, New York, 2012.

\bibitem{Dibenedetto Herrero MR1066761}
E.~DiBenedetto and M.~A.~Herrero.
\newblock Nonnegative solutions of the evolution {$p$}-{L}aplacian equation. {I}nitial traces and {C}auchy problem when {$1<p<2$}.
\newblock {\em Arch. Rational Mech. Anal.}, 111(3):225--290, 1990.

\bibitem{MR0521262}
G.~Duvaut and J.-L. Lions.
\newblock {\em Inequalities in Mechanics and Physics}, volume 219 of {\em Grundlehren der Mathematischen Wissenschaften}.
\newblock Springer-Verlag, Berlin-New York, 1976.
\newblock Translated from the French by C. W. John.


\bibitem{MR717034}
M.~Giaquinta.
\newblock {\em Multiple integrals in the calculus of variations and nonlinear elliptic systems}, volume 105 of {\em Annals of Mathematics Studies}.
\newblock Princeton University Press, Princeton, NJ, 1983.

\bibitem{MR1207810}
J.~Heinonen, T.~Kilpel\"{a}inen, and O.~Martio.
\newblock {\em Nonlinear potential theory of degenerate elliptic equations}.
\newblock Oxford Mathematical Monographs. The Clarendon Press, Oxford University Press, New York, 1993.
\newblock Oxford Science Publications.


\bibitem{MR2916967}
T.~Kuusi and G.~Mingione.
\newblock Potential estimates and gradient boundedness for nonlinear parabolic systems.
\newblock {\em Rev. Mat. Iberoam.}, 28(2):535--576, 2012.

\bibitem{LSU MR0241822}
O.~A. Lady\v{z}enskaja, V.~A. Solonnikov, and N.~N. Ural'ceva.
\newblock {\em Linear and Quasilinear Equations of Parabolic Type}.
\newblock Translations of Mathematical Monographs, Vol. 23. American Mathematical Society, Providence, R.I., 1968.
\newblock Translated from the Russian by S. Smith.


\bibitem{Lieberman MR1465184}
G.~M. Lieberman.
\newblock {\em Second Order Parabolic Differential Equations}.
\newblock World Scientific Publishing Co., Inc., River Edge, NJ, 1996.

\bibitem{Lions MR0259693}
J.-L. Lions.
\newblock {\em Quelques m\'{e}thodes de r\'{e}solution des probl\`emes aux limites non lin\'{e}aires}.
\newblock Dunod, Paris; Gauthier-Villars, Paris, 1969.

\bibitem{MR2356201}
P.~Pucci and J.~Serrin.
\newblock {\em The maximum principle}, volume~73 of {\em Progress in Nonlinear Differential Equations and their Applications}.
\newblock Birkh\"{a}user Verlag, Basel, 2007.



\bibitem{SV MR2728558}
F.~Santambrogio and V.~Vespri.
\newblock Continuity in two dimensions for a very degenerate elliptic equation.
\newblock {\em Nonlinear Anal.}, 73(12):3832--3841, 2010.

\bibitem{Showalter MR1422252}
R.~E. Showalter.
\newblock {\em Monotone operators in {B}anach space and nonlinear partial differential equations}, volume~49 of {\em Mathematical Surveys and Monographs}.
\newblock American Mathematical Society, Providence, RI, 1997.

\bibitem{spohn1993surface}
H.~Spohn.
\newblock Surface dynamics below the roughening transition.
\newblock {\em Journal de Physique I}, 3(1):69--81, 1993.


\bibitem{T-Lipschitz MR4201656}
S~Tsubouchi.
\newblock Local {L}ipschitz bounds for solutions to certain singular elliptic equations involving the one-{L}aplacian.
\newblock {\em Calc. Var. Partial Differential Equations}, 60(1):Paper No. 33, 35, 2021.

\bibitem{T-scalar}
S.~Tsubouchi.
\newblock Continuous differentiability of weak solutions to very singular elliptic equations involving anisotropic diffusivity.
\newblock {\em Adv. Calc. Var.}, 2023.

\bibitem{T-system}
S.~Tsubouchi.
\newblock A weak solution to a perturbed one-{L}aplace system by $p$-{L}aplacian is continuously differentiable.
\newblock {\em Math. Ann.}, 388(2):1261--1322, 2024.

\bibitem{T-parabolic}
S.~Tsubouchi.
\newblock Continuity of a spatial gradient of a weak solution to a very singular parabolic equation involving the one-Laplacian.
\newblock {\em arXiv preprint arXiv:2306.06868v5}, 2024.

\bibitem{MR0886719}
M.~Wiegner.
\newblock On {$C_\alpha$}-regularity of the gradient of solutions of degenerate parabolic systems.
\newblock {\em Ann. Mat. Pura Appl. (4)}, 145:385--405, 1986.
\end{thebibliography}
\end{document}